\newenvironment{proof}{\noindent {\bf Proof:}}{\hfill $\Box$}
\newtheorem{theorem}{Theorem}
\newtheorem{proposition}{Proposition}
\newtheorem{corollary}{Corollary}
\newtheorem{remark}{Remark}
\def\BState{\State\hskip-\ALG@thistlm}
\newcommand{\mr}[1]{\mathrm{#1}}
\newcommand{\Hc}{\mathcal{H}}
\newcommand{\Xc}{\mathcal{X}}
\newcommand{\sigalg}{\mathfrak{M}}
\newcommand{\Rb}{\mathbb{R}}
\newcommand{\Cb}{\mathbb{C}}
\newcommand{\Zb}{\mathbb{Z}}
\newcommand{\Tb}{\mathbb{T}}
\newcommand{\Nb}{\mathbb{N}}
\newcommand{\Fc}{\mathcal{F}}
\newcommand{\Zc}{\mathcal{Z}}
\newcommand{\figCaptionSize}{\small}
\newtheorem{example}{Example}
\def\be{\begin{equation}}
\def\ee{\end{equation}}
\title{\bf Data-driven spectral analysis\ of the Koopman operator}
\begin{document}

\author{Milan Korda$^1$, Mihai Putinar$^{1,2}$, Igor Mezi{\'c}$^1$}

\footnotetext[1]{Milan Korda, Mihai Putinar and Igor Mezi{\'c} are with the University of California, Santa Barbara,\; {\tt milan.korda@engineering.ucsb.edu,  mezic@engineering.ucsb.edu}}
\footnotetext[2]{Mihai Putinar is also affiliated to the Newcastle University, Newcastle upon Tyne, UK, \: {\tt mihai.putinar@ncl.ac.uk}}

\maketitle
\begin{abstract}
\looseness-1 Starting from measured data, we develop a method to compute the fine structure of the spectrum of the Koopman operator with rigorous convergence guarantees. The method is based on the observation that, in the measure-preserving ergodic setting, the moments of the spectral measure associated to a given observable are computable from a single trajectory of this observable. Having finitely many moments available, we use the classical Christoffel-Darboux kernel to separate the atomic and absolutely continuous parts of the spectrum, supported by convergence guarantees as the number of moments tends to infinity. In addition, we propose a technique to detect the singular continuous part of the spectrum as well as two methods to approximate the spectral measure with guaranteed convergence in the weak topology, irrespective of whether the singular continuous part is present or not. The proposed method is simple to implement and readily applicable to large-scale systems since the computational complexity is dominated by inverting an $N\times N$ Hermitian positive-definite Toeplitz matrix, where $N$ is the number of moments, for which efficient and numerically stable algorithms exist; in particular, the complexity of the approach is independent of the dimension of the underlying state-space. We also show how to compute, from measured data, the spectral projection on a given segment of the unit circle, allowing us to obtain a finite approximation of the operator that explicitly takes into account the point and continuous parts of the spectrum. Finally, we describe a relationship between the proposed method and the so-called Hankel Dynamic Mode Decomposition, providing new insights into the behavior of the eigenvalues of the Hankel DMD operator. A number of numerical examples illustrate the approach, including a study of the spectrum of the lid-driven two-dimensional cavity flow.
 \end{abstract}


\begin{center}\small
{\bf Keywords:} Koopman operator, spectral analysis, Christoffel-Darboux kernel, data-driven methods, moment problem, Toeplitz matrix.
\end{center} 
 
\section{Introduction}
Spectral methods have been increasingly popular in data-driven analysis of large-scale nonlinear dynamical systems. Among them, in particular, methods based on approximation of the Koopman operator have been extremely successful across a wide range of fields. This operator, originally defined almost a century ago by Koopman ~\cite{koopman:1931}, is a \emph{linear} infinite-dimensional operator that fully describes the underlying \emph{nonlinear} dynamical system\footnote{For the related and subtle question of the relationship between the so-called spectral and spatial isomorphisms of dynamical systems, see~\cite{redei2012history}. See also~\cite{denker1978unitary} for the related question on when a unitary operator on an abstract Hilbert space is inducible by the composition operator associated to a measure preserving transformation.}. An approximation of the spectrum of the Koopman operator encodes information about the dynamics of the underlying system. For example, global stability is analyzed in~\cite{mauroy2016global}, whereas \cite{mauroy2013isostables} deals with the so-called isostables and isochrones; ergodic partition and mixing properties are analyzed in~\cite{budisicetal:2012}, and~\cite{korda2018linear,Bruntonetal:2016} utilize the Koopman operator approximations for control whereas~\cite{mezic:2005} for model reduction. Recent applications include fluid dynamics \cite{SharmaMezicM16,arbabi2017study}, power grids \cite{Susukietal:2016}, neurodynamics \cite{brunton2016extracting}, energy efficiency \cite{GeorgescuandMezic:2015},  molecular physics \cite{wuetal:2016} and data fusion~\cite{williams2015data}.

Since the early work~\cite{mezic:2005}, there has been a number of algorithms proposed for approximation of the spectrum of the Koopman operator, including Fourier averages~\cite{mezic:2005} and variations of the dynamic mode decomposition (DMD), e.g.~\cite{rowley2009,williams2015data}. The benefits of averaging methods lie in their solid theoretical support  with strong convergence results existing; a limitation of this approach is the requirement of having a grasp on the eigenvalues of the operator beforehand\footnote{From a practical perspective, this downside is not so severe, since the Fourier averages can be computed extremely fast using FFT and hence one can utilize a grid search of the eigenvalues.} and the fact that these methods do not provide any information on the continuous part of the spectrum of the operator. On the other hand, the DMD-like methods do not require the knowledge of the eigenvalues beforehand but their spectral convergence properties are not as favorable~\cite{korda2018convergence} and similarly to averaging methods they do not systematically handle the continuous part of the spectrum. Another approach was pursued in~\cite{giannakis}  where  Koopman eigenfunctions are computed through a regularized advection-diffusion operator.

The present article proposes a new harmonic analysis based, data-driven, approach for approximation of the spectrum of the Koopman operator that is capable of computing both the point and continuous parts of the spectrum (with convergence guarantees), thereby generalizing the method of~\cite{mezic:2005}. We start with the observation that, in the measure-preserving ergodic setting, the moments of the spectral measure associated to a given observable are computable from a single trajectory of this observable. Therefore, in this case, the problem of approximating the spectrum of the Koopman operator reduces to that of reconstructing a measure from its moments. Since the operator is unitary, the measure is supported on the unit circle in the complex plane, which is a very well understood setting, with the earliest results going back to classical Fourier analysis. In our work, we leverage modern results from this field in the Koopman operator setting. We rely primarily on the Christoffel-Darboux kernel which allows us to approximate both the atomic part of the spectrum (i.e., the eigenvalues) as well as the absolutely continuous part. In addition, we develop a method to detect the presence of the singular continuous part of the spectrum as well as two methods to construct approximations to the measure converging weakly even for complicated spectral measures with nonzero singular continuous spectrum. The first method is based on quadrature (with the help of convex optimization) and the second one on the classical Ces\` aro summation. This allows for a detailed understanding of the spectrum derived from raw data and opens the door to approximations of the Koopman operator that explicitly take into account the continuous part of the spectrum. In this work, we approximate the operator as a sum of spectral projections onto segments of the unit circle, where the segments can be taken to be singletons if there is an eigenvalue at a given point. These projections can be readily computed from data and the approximation converges in the strong operator topology.

The framework developed in our article is simple to implement and readily applicable to high-dimensional systems since the computational complexity is fully determined only by the number of the moments $N$ and in particular is independent of the dimension of the underlying state-space. To be more precise, the complexity is governed by the inversion (or Cholesky factorization) of an $N\times N$ Hermitian positive-definite Toeplitz matrix which can be carried out with asymptotic complexity $O(N^2)$ or even $O(N \log^2(N))$ as opposed to $O(N^3)$ for a general matrix; see, e.g.,~\cite{trench1964algorithm,strang1986proposal,brent1988old}.

For the rich history and relevance of the Christoffel-Darboux kernel in the theory of orthogonal polynomials we refer to the ample eulogy of G\' eza Freud by Nevai \cite{nevai1986geza}, the more recent articles by
Simon \cite{simon2009duke} and Totik~\cite{totik2000asymptotics} as well as the comprehensive books by Simon~\cite{simonSzegoDescendants} and Levin and Lubinsky~\cite{levin2012orthogonal}. See also the seminal work of Wiener~\cite{wiener1930generalized} for a predecessor of the methods used in this work. For a comprehensive reference on approximation of Toeplitz operators, see~\cite{bottcher2013analysis}.

Finally, we would also like to make a brief connection to the spectral theory of stationary stochastic processes and signal processing, which offer a diverse set of tools for estimation of spectral properties (see, e.g., \cite{marple1987digital}). These tools are typically tailored for a particular setting (e.g., purely absolutely continuous spectrum or purely discrete spectrum) and do not provide a single comprehensive method for estimation of the individual components of the spectrum. Also, importantly, these methods do not offer any insight into the underlying state-space structures which are provided by the spectral projection techniques proposed in this work.

\section{Problem statement}
Throughout this article we  consider a  discrete-time dynamical systems of the form
\begin{equation}\label{eq:sys}
x^+ = T(x),
\end{equation}
where $x$ is the state of the system, $x^+$ the successor state and $T:\Xc\to\Xc$ the transition mapping defined on the state space $\Xc$. Henceforth we work in an invertible measure preserving setting, i.e., we assume that $T$ is a measurable bijection on the state space $\Xc$ endowed with a sigma algebra $\sigalg$ and a measure $\nu$ defined on $\sigalg$ such that
\begin{equation}\label{eq:meas_pres}
\nu(T^{-1}(A)) = \nu(A), \quad \forall A\in \sigalg.
\end{equation}
In applications, the state space $\Xc$ is typically a subset of $\Rb^n$ for finite-dimensional systems or a subset of a Banach space for infinite dimensional systems (e.g., arising in the study of partial differential equations) and the sigma algebra $\sigalg$ is typically the Borel sigma algebra. In practice, the assumption that the system is measure-preserving implies that we are interested in on-attractor, post-transient, behavior of the dynamical system.

\subsection{The Koopman operator}
A canonical object associated to the dynamical system $(\ref{eq:sys})$ is the Koopman operator $U:\Hc\to \Hc$ defined for all $f:\Xc\to \Cb$, $f\in\Hc$, by
\begin{equation}
Uf = f\circ T,
\end{equation}
where $\circ$ denotes the composition of functions. The choice of the function space $\Hc$ depends on the particular class of systems studied. In our case of measure-preserving systems, a suitable choice\footnote{The fact that $f\circ T \in L_2(\nu)$ for any $f\in L^2(\nu)$ follows from the assumption of $T$ being measure preserving with respect to the measure $\nu$.} is
\[
\Hc = L_2(\nu),
\]
the Hilbert space of complex-valued functions square integrable with respect to the preserved measure $\nu$ with the standard inner product
\[
\langle f,g\rangle = \int_{\Xc} f\bar g\,d\nu,
\]
where $\bar g$ denotes the complex conjugate of $g$. The Koopman operator $U$ is a \emph{linear} operator (acting on an infinite dimensional space) which encodes an equivalent description of the nonlinear dynamical system~(\ref{eq:sys}); indeed, from the knowledge of the action of $U$ on all functions in $\Hc$, one can recover the transition mapping $T$, uniquely up to sets of zero measure $\nu$ (to see this, take for example the collection of all measurable indicator functions $\{I_A\}_{A\in\sigalg}$ and use the fact that $U I_{A} = I_{T^{-1}(A)}$). The functions $f \in \Hc$ are referred to as \emph{observables} as they often represent physical measurements taken on the dynamical systems.

Notable in our setting, the operator $U$ is \emph{unitary}, i.e., $U^{-1} = U^\ast$, where $U^\ast$ denotes the adjoint of $U$. Indeed, for any $f
\in \Hc$ and $g
\in \Hc$ we have
\[
\langle f,U^{\ast} g\rangle = \langle Uf,g\rangle = \int_{\Xc} (f\circ T) \bar g \, d\nu = \int_{\Xc} (f\circ T) (\bar g\circ T^{-1}\circ T) \, d\nu = \int_{\Xc} f (\bar g\circ T^{-1}) \, d\nu  = \langle f, U^{-1} g\rangle,
\]
where the third equality follows from bijectivity of $T$ and the fourth from~(\ref{eq:meas_pres}). 
\subsection{Spectral resolution}
Since $U$ is unitary, the spectrum of $U$, $\sigma(U)$, lies on the unit circle $\Tb$ in the complex plane and the spectral theorem~\cite[Part II, X.2.2, Theorem~1, p.\,895]{dunford1971linear} ensures the existence of a projection-valued\footnote{By a projection-valued measure, we mean a measure with values in the space of orthogonal projection operators on~$\Hc$.} spectral measure $E$ supported on $\sigma(U)$ such that
\begin{equation}\label{eq:specExp}
U = \int_{\Tb} z \, dE(z).
\end{equation}
The relation~(\ref{eq:specExp}) is called the \emph{spectral resolution} or \emph{spectral expansion} of $U$. The measure $E$ decomposes into three mutually singular measures as
\begin{equation}
E = E_{\mr{at}} +  E_{\mr{ac}} +  E_{\mr{sc}},
\end{equation}
where the \emph{atomic part} $E_{\mr{at}}$ is supported on the at most countable set of eigenvalues of $U$ and $E_{\mr{ac}}$ and $E_{\mr{sc}}$ are the \emph{absolutely continuous} (AC) and \emph{singular continuous} (SC) parts of $E$ with their supports referred to as the absolutely continuous respectively singular continuous spectrum of $U$. The main goal of this work is to understand the individual components of the spectrum from data and use this information to construct an approximation of $U$.

In order to do so, we observe that for any $f\in \Hc$, the projection-valued measure $E$ defines an ordinary, real-valued, positive measure on $\Tb$ by
\begin{equation}\label{eq:muf_def}
\mu_f(A) := \langle E(A)f,f\rangle
\end{equation}
for all Borel sets $A\subset\Tb$. Crucially, by the spectral theorem, the moments of $\mu_f$
\begin{equation}\label{eq:mom_mu_def}
m_{k} :=  \int_{\Tb} z^k \, d\mu_f(z), \quad k\in\Zb
\end{equation}
satisfy
\begin{equation}\label{eq:mom_mu_C}
m_{k} = \langle U^kf,f \rangle,\quad k\in\Zb\,,
\end{equation}
which will be instrumental in computing the moments from data in Section~\ref{sec:momComp}.

By density of the trigonometric polynomials in the space of continuous functions defined on the unit circle, the moment sequence $(m_k)_{k\in \Zb}$ uniquely determines the measure $
\mu_f$. In fact, since the measure is real-valued, the relation $m_{-k} = \bar m_k$ holds and hence $(m_k)_{k=0}^\infty$ uniquely determines $\mu_f$.

Importantly, the sole knowledge of $\mu_f$ determines the operator $U$ provided that the function $f$ is \emph{$\ast$-cyclic}, i.e.,
\begin{equation}\label{eq:cyclic}
\Hc_f := \overline{\mr{span}\{f,Uf,U^{-1}f, U^2f, U^{-2}f, \ldots\}} = \Hc.
\end{equation}
For the sake of completeness we state and prove this known result here:
\begin{proposition}
If the observable $f$ is $\ast$-cyclic, then the measure $\mu_f$ defined in~(\ref{eq:muf_def}) fully determines the operator $U$.
\end{proposition}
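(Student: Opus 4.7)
The plan is to construct an explicit unitary equivalence between $U$ and multiplication by $z$ on $L^2(\mu_f)$, showing thereby that $U$ is determined up to unitary conjugacy by the measure $\mu_f$ alone.

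First I would define a linear map $V$ on the dense subspace $\mr{span}\{U^k f : k\in\Zb\}\subset \Hc$ by setting
\[
V\Big(\sum_k c_k U^k f\Big) \;=\; \sum_k c_k z^k \;\in\; L^2(\mu_f),
\]
where the sums are finite. The key computation is the isometry identity: using~(\ref{eq:mom_mu_C}) together with unitarity of $U$,
\[
\Big\langle \sum_j c_j U^j f,\sum_k c_k U^k f\Big\rangle_{\Hc}
=\sum_{j,k} c_j \bar c_k \langle U^{j-k}f,f\rangle
=\sum_{j,k} c_j \bar c_k \, m_{j-k}
=\Big\langle \sum_j c_j z^j,\sum_k c_k z^k\Big\rangle_{L^2(\mu_f)}.
\]
This identity simultaneously gives well-definedness of $V$ (any null combination on the left is sent to a zero-norm element on the right) and shows that $V$ is isometric.

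Next I would extend $V$ by continuity. By the $\ast$-cyclicity hypothesis~(\ref{eq:cyclic}), the domain $\mr{span}\{U^k f\}$ is dense in $\Hc$, so $V$ extends to a linear isometry $V:\Hc\to L^2(\mu_f)$. For surjectivity I invoke the density of the trigonometric (Laurent) polynomials in $L^2(\mu_f)$, which holds for any finite Borel measure on $\Tb$ because the continuous functions on the compact set $\Tb$ are dense in $L^2(\mu_f)$ and are uniformly approximable by trigonometric polynomials via Fej\'er's theorem. Thus $V$ is a unitary isomorphism.

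Finally, on the dense subspace $\mr{span}\{U^k f\}$ one has $V U(U^k f) = V(U^{k+1}f) = z^{k+1} = z\cdot V(U^k f)$, so by continuity $V U V^{-1} = M_z$, the operator of multiplication by $z$ on $L^2(\mu_f)$. Since $M_z$ is entirely determined by $\mu_f$, so is $U$ up to unitary equivalence, which is the content of the proposition. I expect the only mildly delicate step to be the density of trigonometric polynomials in $L^2(\mu_f)$ when $\mu_f$ is singular, but this follows from the standard density of $C(\Tb)$ in $L^2$ of any finite Borel measure combined with Weierstrass/Fej\'er approximation on the circle.
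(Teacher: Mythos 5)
Your proposal is correct, but it takes a genuinely different route from the paper. The paper's argument proceeds via the Riesz representation theorem: it notes that $U$ is determined by the sesquilinear form $\langle Ug,h\rangle$, approximates arbitrary $g,h$ by finite combinations $\tilde g=\sum\alpha_i U^i f$ and $\tilde h=\sum\beta_j U^j f$, shows $\langle U\tilde g,\tilde h\rangle=\sum_{i,j}\alpha_i\bar\beta_j\, m_{i+1-j}$ is read off from the moments, and controls the discrepancy by an explicit $\epsilon$-estimate using $\|U\|=1$ and Cauchy--Schwarz. You instead build the canonical unitary $V:\Hc\to L^2(\mu_f)$ sending $U^k f\mapsto z^k$, verify it is isometric from the moment identity, extend by density (this is exactly where $\ast$-cyclicity enters), prove surjectivity via density of trigonometric polynomials in $L^2$ of a finite Borel measure on the circle, and conclude $VUV^{-1}=M_z$. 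The two are logically equivalent ways of saying that the GNS-type data $(\Hc,U,f)$ is determined by the moment sequence, but yours is the structural formulation: it produces the spectral representation of $U$ on the cyclic subspace rather than merely recovering the bilinear form, and it avoids the $\epsilon$-bookkeeping at the modest cost of invoking density of $C(\Tb)$ in $L^2(\mu_f)$ and Fej\'er/Stone--Weierstrass. It is worth noting that the unitary equivalence $\Hc_f\cong L^2(\mu_f)$, $U|_{\Hc_f}\cong M_z$ that you construct is precisely the fact the paper invokes later (without proof, citing the spectral theorem) in Section~\ref{sec:dmd_relation} to relate the Hankel DMD operator to the finite central truncation of $M_z$; so your argument doubles as a self-contained proof of that ingredient as well.
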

\begin{proof}
First, by the Riesz representation theorem for Hilbert spaces, the operator $U$ is determined by the values of $\langle Ug,h\rangle$ for all $g,h\in \Hc$. Since $f$ is cyclic, for any such $g$ and $h$ and any $\epsilon >0$ there exists an $N > 0$ such that $\|g - \sum_{i=-N}^N \alpha_i U^i f \| < \epsilon$ and $\|h - \sum_{i=-N}^N \beta_i U^i f \| < \epsilon$ for some $\alpha_i\in\Cb$ and $\beta_i\in\Cb$. Denoting $\tilde g =\sum_{i=-N}^N \alpha_i U^i f $ and $\tilde h =\sum_{i=-N}^N \beta_i U^i f $, we have
\begin{align*}
\langle Ug,h\rangle & =  \langle U(g-\tilde{g} + \tilde g),h-\tilde{h} + \tilde h\rangle \\  
&= \langle U(g-\tilde{g}),h-\tilde{h}\rangle + \langle U(g-\tilde{g}),\tilde h\rangle + \langle U\tilde g,h - \tilde h\rangle + \langle U\tilde{g},\tilde h\rangle \\ &  = c(\epsilon)
+ \Big\langle U\sum_{i=-N}^N \alpha_i U^i f , \sum_{i=-N}^N \beta_i U^i f \Big\rangle = c(\epsilon) + \sum_{i,j}{\alpha_i}\bar\beta_j\langle U^{i+1-j}f,f\rangle \\ &=  
 c(\epsilon) + \sum_{i,j}{\alpha_i}\bar\beta_j\int_\Tb z^{i+1-j}\,d\mu_f =  c(\epsilon) + \sum_{i,j}{\alpha_i}\bar\beta_j m_{i+1-j},
\end{align*}
where $c(\epsilon) = \langle U(g-\tilde{g}),h-\tilde{h}\rangle + \langle U(g-\tilde{g}),\tilde h\rangle + \langle U\tilde g,h - \tilde h\rangle$ satisfies $| c(\epsilon)| \le \epsilon^2 + \epsilon(\|h\| + \epsilon)+ \epsilon(\|g\| + \epsilon) = 3\epsilon^2 + \epsilon(\|g\| + \|h\|)$ by the Schwarz inequality and the fact that $\|U\| = 1$ (since $U$ is unitary). As $\epsilon$ was arbitrary, the proof is complete.
\end{proof}

Therefore, provided that $f$ is $\ast$-cyclic, the  sequence of complex numbers $(m_k)_{k=0}^\infty$ fully determines the operator $U$.

\begin{remark}
If $f$ is not $\ast$-cyclic (i.e., $\Hc_f\ne \Hc$), then the measure $\mu_f$ determines the operator $U$ on $\Hc_f\subset \Hc$, which is the smallest closed subspace containing $f$ invariant under the actions of $U$ and $U^\ast$.
\end{remark}

Whether or not $f$ is $\ast$-cyclic, the information contained in $\mu_f$ is of great importance for understanding the spectrum of the operator $U$ and for its approximation. In particular, similar to $E$, the measure  $\mu_f$ decomposes as
\begin{equation}\label{eq:lebDec_muf}
\mu_f = \mu_{\mr{at}} + \mu_{\mr{ac}} + \mu_{\mr{sc}}
\end{equation}
with $\mu_{\mr{at}}$ being atomic (i.e., at most a countable sum of Dirac masses), $\mu_{\mr{ac}}$ being absolutely continuous with respect to the Lebesgue measure on $\Tb$ and $\mu_{\mr{sc}}$ being singularly continuous with respect to the Lebesgue measure on $\Tb$. The supports of $\mu_{\mr{at}}$, $\mu_{\mr{ac}}$, $\mu_{\mr{sc}}$ are, respectively, included in the supports of $E_{\mr{at}}$, $E_{\mr{ac}}$ and $E_{\mr{sc}}$, with equality of the supports if and only if $f$ is $\ast$-cyclic. In particular the locations of the atoms in $\mu_{\mr{at}}$ corresponds to eigenvalues of $U$.

In this work we show:
\begin{enumerate}
\item How the moments $(m_k)_{k=0}^N$, for any $N\in\Nb$, can be computed from data.
\item How the measure $\mu_f$ can be approximately reconstructed from these moments.
\item How the operator $U$ can be approximated using the knowledge of $\mu_f$.
 \end{enumerate}


\section{Computation of moments from data}\label{sec:momComp}
In this section we describe how to estimate the first $N+1$ moments \[
m_k = \langle U^kf,f\rangle= \int_X (f\circ T^k)\bar f\,d\nu , \quad k=0,\ldots,N,\]
given data in the form of $M$ measurements (or snapshots) of the observable $f\in \Hc$ in the form \begin{equation}\label{eq:data}
y_i=f(x_i), \quad i=1,\ldots M.
\end{equation}

We distinguish between two assumptions on the data generating process:
\subsection{Ergodic sampling}
In the first scenario we assume the measure $\nu$ is \emph{ergodic} in which case we assume that the data~(\ref{eq:data}) lie on a single trajectory, i.e., $x_{i+1} = T(x_i)$. In this case, by Birkhoff's ergodic theorem we infer, for $\nu$-almost all initial conditions $x_1$:
\begin{equation}\label{eq:ergod}
m_k = \int_\Xc (f\circ T^k)\bar f\,d\nu = \lim_{M\to\infty}\frac{1}{M-k}\sum_{i=1}^{M-k} (f\circ T^k(x_i))\bar f(x_i) = \lim_{M\to\infty}\frac{1}{M-k}\sum_{i=1}^{M-k}y_{i+k}\bar y_i
\end{equation}
and therefore for large $M$
\begin{equation}\label{eq:moment:approx}
m_k \approx \frac{1}{M-k}\sum_{i=1}^{M-k}y_{i+k}\bar y_i.
\end{equation}

In practice, the ergodic measure $\nu$ will often be the so called physical measure (see, e.g.,~\cite{froyland1998approximating}) in which case the ergodic theorem holds also for Lebesgue almost all initial conditions and hence an initial condition sampled at random from a uniform distribution over $\Xc$ will satisfy~(\ref{eq:ergod}) with probability one.

The reader is referred to~\cite{kachurovskii1996rate} for a survey on the rate of convergence of the ergodic averages~(\ref{eq:moment:approx}) as well as to \cite{das2018super} for a weighted scheme exhibiting an accelerated rate of convergence for quasi-periodic dynamical systems.

\subsection{IID sampling}
In the second scenario we assume that the samples~(\ref{eq:data}) are independently drawn at random from the distribution of $\nu$. In this case  the moments $m_k$ satisfy (\ref{eq:ergod}) and (\ref{eq:moment:approx}) by virtue of the law of large numbers. This sampling scheme requires the knowledge of the preserved measure $\nu$ and hence is less relevant in applications than the ergodic sampling. In addition, this sampling scheme requires a trajectory of length $N+1$ to be computed for each sampled initial condition in order to compute the estimates of $(m_k)_{k=0}^N$, making this sampling scheme also more computationally expensive.

\section{Reconstruction of $\mu_f$}
In this section we show how to approximately reconstruct $\mu_f$ using the truncated moment sequence $(m_k)_{k=0}^N$ computed from data in Section~\ref{sec:momComp}. For the remainder of this section we suppress the dependency on the observable $f$ and write $\mu$ for $\mu_f$. The measure $\mu$ is supported on a subset of the unit circle $\Tb$ and hence we can\footnote{\label{foot:symmetry} There is an ambiguity when representing a measure on $\Tb$ by a measure on $[0,1]$ if there is an atom at $1=e^{i2\pi}$. Throughout this paper we shall assume that the representation on $[0,1]$ satisfies $\mu(\{0\}) = \mu(\{1\})$, which eliminates the ambiguity. Therefore if there is an atom at $1=e^{i2\pi}$ with weight $w > 0$, then $\mu(\{0\}) = \mu(\{1\}) = w/2$.} regard it as a measure on $[0,1]$. We shall use the symbol $\mu$ both for a measure on $\Tb$ and its representation on $[0,1]$, the distinction always being clear from the context. Hence, the moments of $\mu$~(\ref{eq:mom_mu_C}) become the Fourier coefficients
\begin{equation}\label{eq:momFour}
m_k = \int_{[0,1]} e^{i2\pi \theta k}\, d\mu(\theta),\quad k\in\Zb.
\end{equation}


Regarding $\mu$ as a measure on $[0,1]$, the Lebesgue decomposition of $\mu$ (Eq.~(\ref{eq:lebDec_muf}))  reads
\begin{equation}\label{eq:lebDecomp}
\mu = \mu_{\mr{at}} + \mu_{\mr{ac}} + \mu_{\mr{sc}} \,,
\end{equation}
where the atomic part can be written as (with the symmetry convention $\mu_{\mr{at}}(\{0\}) = \mu_{\mr{at}}(\{1\})$; see Footnote~\ref{foot:symmetry})
\[
\mu_{\mr{at}}=\sum_{j=1}^{n_{\mr{at}}} w_j \delta_{\theta_j} 
\]
with $w_j>0$ and $n_{\mr{at}} \in \Nb \cup \{\infty\}$, and the absolutely continuous part as
\[
d\mu_{\mr{ac}} = \rho\, d\theta
\]
with the density $\rho \in L_1([0,1],d\theta)$. In what follows we describe a procedure to recover the weights $w_i$ and locations $\theta_i$ as well as the density $\rho$ from the moment data, even in the presence of the singular continuous part $\mu_{\mr{sc}}$ (Section~\ref{sec:CD}). We also show (in Section~\ref{sec:weakApprox}) how to construct approximations $\mu_N$ of $\mu$ that converge weakly to $\mu$ as $N$ tends to infinity, even in the presence of $\mu_{\mr{sc}}$. In particular, denoting
\[
F(t) := \mu ([0,t]),\quad F_{N}(t) := \mu_N ([0,t])
\]
the right-continuous (cumulative) distribution functions of $\mu$ and $\mu_N$, we will construct the approximations $F_N$ such that
\begin{equation}\label{eq:distFunConv_temp}
\lim_{N\to\infty} F_N(t) = F(t)
\end{equation}
at all points of continuity of $F$. The weak approximations $\mu_N$ will be constructed in two different ways, one purely atomic and one purely absolutely continuous. In addition to~(\ref{eq:distFunConv_temp}), the absolutely continuous approximations will satisfy
 \[
\lim_{N\to\infty} F_N(t) = \frac{F(t)+F^-(t)}{2}, \quad t\in(0,1),
\]
where  $F^{-}(t) = \mu([0,t))$ denotes the left limit of $F$ at $t$.

\subsection{Christoffel-Darboux kernel}\label{sec:CD}
The main tool we use for the recovery of the atomic and AC parts is the classical Christoffel-Darboux (CD) kernel defined for each $N \in \Nb$ and each $z\in\Cb$, $s\in \Cb$ by
\[
K_N(z,s) = \sum_{i=0}^N \bar \varphi_i(z) \varphi_i(s),
\]
where $\varphi_i$'s are the orthonormal polynomials associated to $\mu$, i.e., $\deg \varphi_j = j$ and $\int_{\Tb}\varphi_i \bar\varphi_j \, d\mu = 1$ if $i=j$ and zero otherwise. Note that the first $N$ orthonormal polynomials (and hence the kernel itself) can be determined from the first $N$ moments $(m_k)_{k=0}^N$ of the measure $\mu$. The following explicit formula is folklore (e.g., \cite[Theorem 2.1]{simon2008christoffel}):
\begin{equation}\label{eq:CDkerel_explicit}
K_N(z,s) = \psi_N(z)^{H} \mathbf{M}_N^{-1}\psi_N(s),
\end{equation}
where $\mathbf{M}_N$ is the positive semidefinite Hermitian Toeplitz \emph{moment matrix}
\begin{equation}\label{eq:momMat}
 \mathbf{M}_N= \int_{\Tb} \psi_N\psi_N^H\,d\mu  =  \begin{bmatrix}m_0 & \bar m_1 & \bar m_2&\ldots& \ldots& \bar m_{N} \\ 
									  m_1 & m_0 &  \bar m_1 & \ddots&  &  \bar m_{N-1}\\
 m_2 &  m_1 &\ddots &\ddots &\ddots  & \vdots	\\
\vdots & \ddots & \ddots & \ddots&   \bar m_1 & \bar m_2			\\
\vdots & & \ddots & m_1 & m_0 & \bar m_1					 	\\
 m_{N} & \ldots & \ldots &  m_2 & m_1 & m_0
			     \end{bmatrix}	,	   
\end{equation}
and
\begin{equation}
\psi_N(z) = \begin{bmatrix}
1,z,z^2,\ldots, z^N
\end{bmatrix}^\top,
\end{equation}
where $A^H$ denotes the Hermitian transpose of a matrix $A$ (i.e., $(A^H)_{i,j} = \bar A_{j,i}$) and $A^\top$ the ordinary transpose.

The expression~(\ref{eq:CDkerel_explicit}) makes it clear that the kernel $K_N$ is well defined only if $\mathbf{M}_N$ is invertible. This is for example the case if the density $\rho$ is strictly positive on a set of positive Lebesgue measure or if the atomic part contains at least $N+1$ atoms. We will not invoke any of these assumptions but rather build the kernel~(\ref{eq:CDkerel_explicit}) using the modified moment sequence
\[
\tilde m_k = \begin{cases}
					m_k +1 & k = 0\\
					m_k & k > 0.
\end{cases}
\]
The moment sequence $(\tilde{m}_k)_{k=0}^\infty$ corresponds to the measure $d\tilde{\mu}=d\mu + 1d\theta$ for which the associated moment matrix is always invertible (since it is the sum of a positive semidefinite matrix and the identity matrix). Whenever constructing any approximations to $\mu$ from $(\tilde{m}_k)_{k=0}^N$ we simply subtract the constant density at the final step of the approximation process. The CD kernel constructed using the modified moment sequence will be denoted by
\begin{equation}\label{eq:CDmodified}
\tilde{K}_N(z,s) = \psi_N(s)^{H} \tilde{\mathbf{M}}^{-1}_N\psi_N(z),
\end{equation}
where $\tilde{\mathbf{M}}_N$ is as in~(\ref{eq:momMat}) with $m_k$ replaced by $\tilde{m}_k$.

Numerical aspects of evaluating the matrix inversion in~(\ref{eq:CDmodified}) are discussed in Section~\ref{sec:CDnumerics}.

\subsubsection{Approximation of $\mu$ using the CD kernel}
When approximating $\mu$ using the CD kernel, relevant for us will be the diagonal values $\tilde{K}_N(z,z)$ on the unit circle, i.e., with $z = e^{i2\pi\theta}$. The following well-known variational characterization (e.g.,~\cite[Proposition 2.16.2]{simonSzegoDescendants}) lies at the heart of the approximation results:
\begin{equation}\label{eq:varCharCD}
\frac{1}{\tilde{K}_N(z_0,z_0)} = \min_{p_N}\Big\{\int_\Cb |p_N(z)|^2 d\tilde \mu(z) \;\;\Big\vert\;\; p_N(z_0)=1,\; \mr{deg}(p_N) \le N \Big\}
\end{equation}
for all $z_0 \in \Cb$, where the optimization in~(\ref{eq:varCharCD}) is over all complex polynomials of degree at most $N$, i.e., over all $p_N \in \mr{span}\{1,z,\ldots, z^N\}$.

The first classical result pertains to the atomic part of the measure $\mu$:
\begin{theorem}[Point spectrum]\label{thm:atoms}
If $\mu$ is a positive measure on $\Tb$ and $(m_k)_{k=0}^N$ its moments defined by~(\ref{eq:mom_mu_C}), then for all $\theta \in [0,1]$
\begin{equation}\label{eq:atomConv}
\lim_{N\to\infty}\Big[ \frac{1}{\tilde{K}_N(e^{i2\pi \theta},e^{i2\pi \theta})} - \frac{1}{N+1} \Big] = \mu(\{e^{i2\pi\theta}\}) = \mu_{\mr{at}}(\{e^{i2\pi\theta}\}).
\end{equation}
\end{theorem}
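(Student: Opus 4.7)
The plan is to apply the variational characterization~(\ref{eq:varCharCD}) to $\tilde{\mu} = \mu + d\theta$ and separate the contributions of $\mu$ and of Lebesgue measure in the objective. Writing $z_0 := e^{i2\pi\theta}$, the target becomes the two-sided estimate
\[
\frac{1}{\tilde K_N(z_0,z_0)} = \mu(\{z_0\}) + \frac{1}{N+1} + o(1),
\]
and the last equality of the theorem, $\mu(\{z_0\}) = \mu_{\mr{at}}(\{z_0\})$, is immediate from the Lebesgue decomposition~(\ref{eq:lebDec_muf}) since $\mu_{\mr{ac}}$ and $\mu_{\mr{sc}}$ vanish on singletons.

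For the lower bound, I would observe that every admissible polynomial $p_N$ (i.e.\ $\deg p_N \le N$ and $p_N(z_0)=1$) satisfies simultaneously $\int |p_N|^2\,d\mu \ge |p_N(z_0)|^2\,\mu(\{z_0\}) = \mu(\{z_0\})$ by restricting the integral to the atom at $z_0$, and $\int |p_N|^2\,d\theta \ge 1/(N+1)$. The second inequality uses that the orthonormal polynomials for Lebesgue measure on $\Tb$ are $\varphi_k(z) = z^k$, so $\sum_{k=0}^N |z_0|^{2k} = N+1$ on $\Tb$, and therefore~(\ref{eq:varCharCD}) applied to $d\theta$ gives $1/(N+1)$ as a valid lower bound on $\int |p_N|^2\,d\theta$ over all admissible $p_N$. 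Summing the two inequalities inside the minimum yields $1/\tilde K_N(z_0,z_0) \ge \mu(\{z_0\}) + 1/(N+1)$.

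For the matching upper bound, I would plug the normalized Fej\'er-type test polynomial
\[
q_N(z) := \frac{1}{N+1}\sum_{k=0}^{N} (\bar z_0 z)^k
\]
into the variational principle. It has degree $N$ and $q_N(z_0)=1$, and a direct computation shows $|q_N(e^{i2\pi\theta})|^2 = F_N(\theta-\theta_0)/(N+1)$, where $F_N$ denotes the classical Fej\'er kernel. The triangle inequality gives $|q_N|^2 \le 1$ uniformly on $\Tb$, and the standard estimate $F_N(\alpha) \le 1/((N+1)\sin^2\pi\alpha)$ for $\alpha \not\equiv 0 \pmod 1$ forces $|q_N|^2 \to \mathbf{1}_{\{z_0\}}$ pointwise on $\Tb$. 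Dominated convergence then yields $\int |q_N|^2\,d\mu \to \mu(\{z_0\})$, while orthonormality of the monomials in $L^2(d\theta)$ gives exactly $\int |q_N|^2\,d\theta = 1/(N+1)$. Inserting $q_N$ into~(\ref{eq:varCharCD}) produces $1/\tilde K_N(z_0,z_0) \le \mu(\{z_0\}) + 1/(N+1) + o(1)$, closing the sandwich.

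The whole argument is essentially self-contained and splits cleanly because the added Lebesgue piece in $\tilde{\mu}$ has explicit orthonormal polynomials. The only mildly technical step is the pointwise collapse of $|q_N|^2$ onto the single point $z_0$, which is classical Fej\'er kernel material; once that is in hand, the dominated-convergence argument and the Lebesgue-only variational bound do the rest, with no regularity needed of $\mu$ itself.
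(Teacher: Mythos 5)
Your proposal is correct, but it takes a genuinely different route from the paper. The paper's proof is a one-line citation to the classical Christoffel-function result (Theorem~2.2.1 of Simon's book, asserting $\tilde K_N(z_0,z_0)^{-1}\to\tilde\mu(\{z_0\})$) combined with the trivial observation that the Lebesgue part of $\tilde\mu$ puts no mass on a singleton. You instead reconstruct this fact from scratch by exploiting the specific additive structure $\tilde\mu=\mu+d\theta$: for the lower bound you use that, pointwise in $p_N$, $\int|p_N|^2\,d\mu\ge\mu(\{z_0\})$ and $\int|p_N|^2\,d\theta\ge1/(N+1)$ (the latter because the orthonormal polynomials of normalized Lebesgue measure on $\Tb$ are the monomials, so that Christoffel function is exactly $(N+1)^{-1}$); for the upper bound you plug the normalized Fej\'er polynomial $q_N(z)=(N+1)^{-1}\sum_{k=0}^N(\bar z_0 z)^k$, which gives $\int|q_N|^2\,d\theta=(N+1)^{-1}$ exactly and, by $|q_N|^2\le1$ together with pointwise collapse to $\mathbf 1_{\{z_0\}}$ and dominated convergence (legitimate, since $\mu$ is a finite measure), $\int|q_N|^2\,d\mu\to\mu(\{z_0\})$. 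The two estimates sandwich $\tilde K_N(z_0,z_0)^{-1}$ between $\mu(\{z_0\})+(N+1)^{-1}$ and $\mu(\{z_0\})+(N+1)^{-1}+o(1)$, which proves the theorem. The paper's approach buys brevity and applies verbatim to any finite positive measure; yours is self-contained and, as a small bonus, the lower bound is not merely asymptotic but holds for every finite $N$, giving the monotone-type inequality $\tilde K_N(e^{i2\pi\theta},e^{i2\pi\theta})^{-1}-(N+1)^{-1}\ge\mu(\{e^{i2\pi\theta}\})$, a refinement not visible from the citation-only proof.
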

\begin{proof}
The claim follows from~(\ref{eq:varCharCD}) and the classical result~(e.g., \cite[Theorem 2.2.1]{simon2009orthogonal}) implying that $\tilde{K}_N(e^{i2\pi \theta},e^{i2\pi \theta})^{-1} \to \tilde{\mu}(\{e^{i2\pi\theta}\})$, and from the facts that $\mu_{\mr{at}}(\{e^{i2\pi\theta}\})=\mu(\{e^{i2\pi\theta}\}) = \tilde{\mu}(\{e^{i2\pi\theta}\})$ and $(N+1)^{-1} \to 0$.
\end{proof}

Two remarks are in order. First of all, the factor $(N+1)^{-1}$ does not influence the limit but improves the accuracy of the estimate for finite $N$ by compensating for the effect of adding $1d\theta$ to the measure $\mu$. This will become clear from Theorem~\ref{thm:densConv} below. Second, the limit~(\ref{eq:atomConv}) holds for \emph{all} $\theta \in[0,1]$, not almost all.

Theorem~\ref{thm:atoms} asserts that we can extract the atomic part of the measure $\mu$ by studying the behavior of $\tilde{K}_N^{-1}$ for large $N$. Whenever $\tilde{K}_N^{-1}(e^{i2\pi \theta},e^{i2\pi \theta})$ tends to zero, $\theta$ is not in the support of the atomic part $\mu_{\mr{at}}$; if, on the other hand, $\tilde{K}_N^{-1}(e^{i2\pi \theta},e^{i2\pi \theta})$ converges to a nonzero value, then $\theta$ is in the support of $\mu_{\mr{at}}$ and the weight on the Dirac mass at $\theta$ is equal to the limit of $\tilde{K}_N^{-1}(e^{i2\pi \theta},e^{i2\pi \theta})$.

The following theorem describes how to exploit the CD kernel for recovering the density $\rho$ of the AC part:
\begin{theorem}[Density of AC part]\label{thm:densConv}
If $\mu = \mu_{\mr{at}}+\mu_{\mr{ac}}+\mr{\mu_\mr{sc}}$ with $d\mu_{\mr{ac}} =\rho\,d\theta$ is a positive measure supported on $[0,1]$ and $(m_k)_{k=0}^N$ its Fourier coefficients~(\ref{eq:momFour}), then for Lebesgue almost all $\theta \in [0,1]$
\begin{equation}\label{eq:densConv}
\lim_{N\to\infty}\Big[ \frac{N+1}{\tilde{K}_N(e^{i2\pi \theta},e^{i2\pi \theta})} - 1 \Big] = \rho(\theta).
\end{equation}
\end{theorem}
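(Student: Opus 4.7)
The approach is to reduce the claim to the classical Máté--Nevai--Totik asymptotic for the Christoffel--Darboux kernel on the unit circle, applied not to $\mu$ itself but to the modified measure $\tilde\mu = \mu + d\theta$ used throughout this section. The Lebesgue decomposition of $\tilde\mu$ reads
\[
d\tilde\mu = (\rho(\theta)+1)\,d\theta + d\mu_{\mr{sc}} + d\mu_{\mr{at}},
\]
so the AC density of $\tilde\mu$ is $\tilde\rho = \rho + 1 \geq 1$ everywhere. In particular $\log\tilde\rho \geq 0$, hence the Szeg\H{o} condition $\int_0^1 \log\tilde\rho\,d\theta > -\infty$ for $\tilde\mu$ is trivially satisfied, independently of how irregular the singular parts of $\mu$ may be.

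The Máté--Nevai--Totik theorem (see, e.g., \cite[Ch.\ 2]{simonSzegoDescendants} or \cite{simon2008christoffel}) then yields, for Lebesgue almost every $\theta\in[0,1]$,
\[
\lim_{N\to\infty} \frac{N+1}{\tilde{K}_N(e^{i2\pi\theta}, e^{i2\pi\theta})} \;=\; \tilde\rho(\theta) \;=\; \rho(\theta)+1,
\]
where the normalization is the one induced by the paper's parametrization $z = e^{i2\pi\theta}$ of $\Tb$, under which normalized arc length on $\Tb$ pulls back to ordinary Lebesgue measure $d\theta$ on $[0,1]$. Subtracting $1$ from both sides produces precisely~(\ref{eq:densConv}); the explicit $-1$ correction on the left-hand side of~(\ref{eq:densConv}) is seen to compensate exactly for the $+1$ added to $m_0$ in the definition of $\tilde{\mathbf{M}}_N$, which is the structural reason for the $N+1$ numerator and the $-1$ subtraction appearing in the statement.

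The main work behind the quoted asymptotic is built into the Máté--Nevai--Totik theorem itself: once $\tilde\mu$ has an AC density bounded away from zero a.e.\ (which we have engineered by adding $d\theta$), the ratio $\tilde K_N/(N+1)$ is forced to converge to $1/\tilde\rho$ regardless of the support or size of the singular components. The variational characterization~(\ref{eq:varCharCD}) is the standard starting point: the upper bound is obtained by plugging in a suitable modulated Fejér-type kernel and using Szeg\H{o}'s theorem to control the integral against $\tilde\rho\,d\theta$, while the matching lower bound uses the Christoffel function estimates on sets where $\tilde\rho$ is close to its Lebesgue value. The main obstacle in a self-contained treatment would be reproducing this two-sided estimate; since this is classical, the proof reduces to citing the theorem and verifying that the normalization conventions match the paper's parametrization of $\Tb$ by $[0,1]$.
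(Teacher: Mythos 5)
Your proposal is correct and follows essentially the same route as the paper: both arguments apply the M\'at\'e--Nevai--Totik Christoffel-function asymptotic to the modified measure $\tilde\mu=\mu+d\theta$, observing that its AC density $\tilde\rho=\rho+1\ge1$ makes the Szeg\H{o} integrability condition automatic, and then subtract $1$ to undo the regularization. The paper simply cites Theorem~1 of M\'at\'e--Nevai--Totik (1991) directly rather than Simon's book, but the structure of the argument is identical.
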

\begin{proof}
The result follows from Theorem~1 of~\cite{mate1991szego} in view of the fact that the density of the AC part of $\tilde{\mu}$ is equal to $\tilde{\rho} = \rho + 1$ and satisfies Szeg\" o's integrability condition $\int_0^1 \log(\tilde{\rho}(\theta))\,d\theta > -\infty$ for any nonnegative $\rho$ and in view of~(\ref{eq:varCharCD}).
Under the integrability condition, one has $(N+1)\tilde{K}_N(e^{i2\pi \theta},e^{i2\pi \theta})^{-1} \to \tilde{\rho}(\theta)$ Lebesgue almost everywhere, which is equivalent to~(\ref{eq:densConv}).
\end{proof}

\begin{remark}\label{rem:convNK}
Theorem~\ref{thm:densConv} provides a recovering method of the density $\rho$. We remark that~(\ref{eq:densConv}) holds for Lebesgue almost every $\theta \in [0,1]$. Precise characterization of when~(\ref{eq:densConv}) holds is given in~Theorem~4 of~\cite{mate1991szego}. This theorem in particular implies that~(\ref{eq:densConv}) holds if $\theta$ is a Lebesgue point of the density $\rho$ and lies outside of the support of the singular parts $\mu_{\mr{at}}$ and $\mu_{\mr{sc}}$. On the other hand, Theorem~\ref{thm:atoms} implies that the limit in~(\ref{eq:densConv}) is infinite if $\theta$ lies in the support of the $\mu_{\mr{at}}$. As far as our knowledge goes, a precise characterization of the limiting behavior for $\theta$ belonging to the support of $\mu_{\mr{sc}}$ remains an open question.   Consult \cite{simonSzegoDescendants,totik2000asymptotics} for the state of the art on this matter.
\end{remark}


\begin{example}[Measure with AC part + atoms]\label{ex:ac+at}
We demonstrate the approximations using the CD kernel on the measure $\mu = \mu_{\mr{at}} + \mu_{\mr{ac}}$ with $\mu_{\mr{at}} = 0.05\delta_{0} + 0.05\delta_{1} + 0.1\delta_{0.2}+0.1\delta_{0.6} + 0.1\delta_{0.8}$ and with the density of the AC part $\rho(\theta) = 4I_{[0.3,0.7]}(\theta)$, where $I_{[0.3,0.7]}$ denotes the indicator function of the interval $[0.3,0.7]$. For ease  of notation we set
\begin{equation}\label{eq:zeta_N}
\zeta_N(\theta) :=\frac{N+1}{\tilde{K}_N(e^{i2\pi\theta},e^{i2\pi\theta})}  - 1.
\end{equation}
Figure~\ref{fig:artifNK} depicts the approximation $\zeta_N$, i.e., the approximation to the AC part of Theorem~\ref{thm:densConv},  whereas  Figure~\ref{fig:artif1K} depicts the $\zeta_N / (N+1) $, i.e., the approximation to the atomic part of Theorem~\ref{thm:atoms}. Notice in particular the different scale of the vertical axis between the two figures. Notice the rapid convergence of $\zeta_N(\theta)$ to $\rho(\theta)$ for $\theta$  outside the support of $\mu_{\mr{at}}$. Notice also the rapid convergence of $\zeta_N(\theta)/(N+1)$ to $\mu(\{\theta\})= \mu_{\mr{at}}(\{\theta\})$ for all $\theta \in [0,1]$. Notice in particular that because of periodicity, one has  $\zeta_N(0) / (N+1) = \zeta_N(1) / (N+1)$ and this value converges to $\mu(\{e^{i2\pi 0}\})$ which in our case is $0.05+0.05 =0.1$.

\begin{figure*}[th]
\begin{picture}(140,160)
\put(20,0){\includegraphics[width=70mm]{./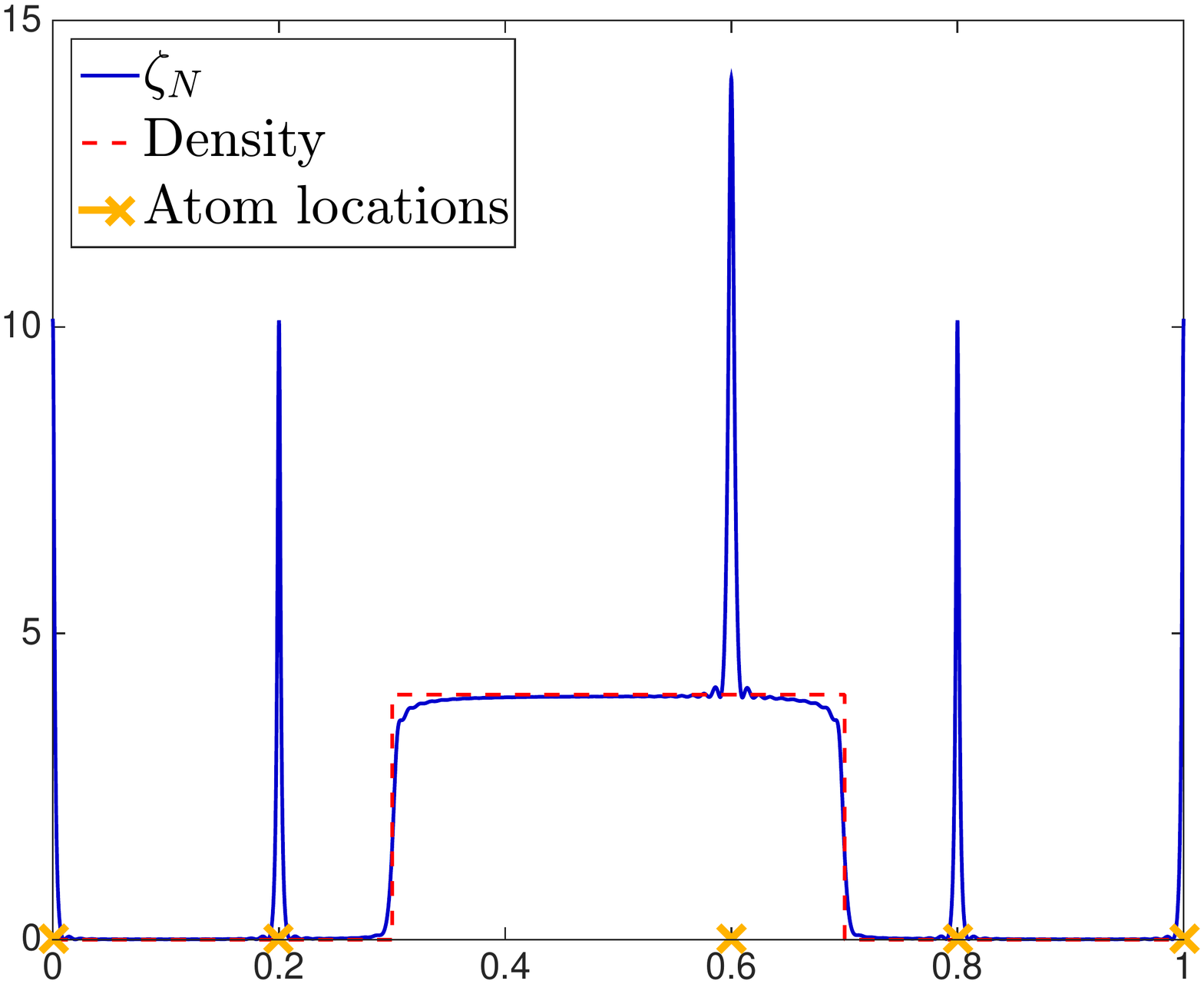}}
\put(235,0){\includegraphics[width=70mm]{./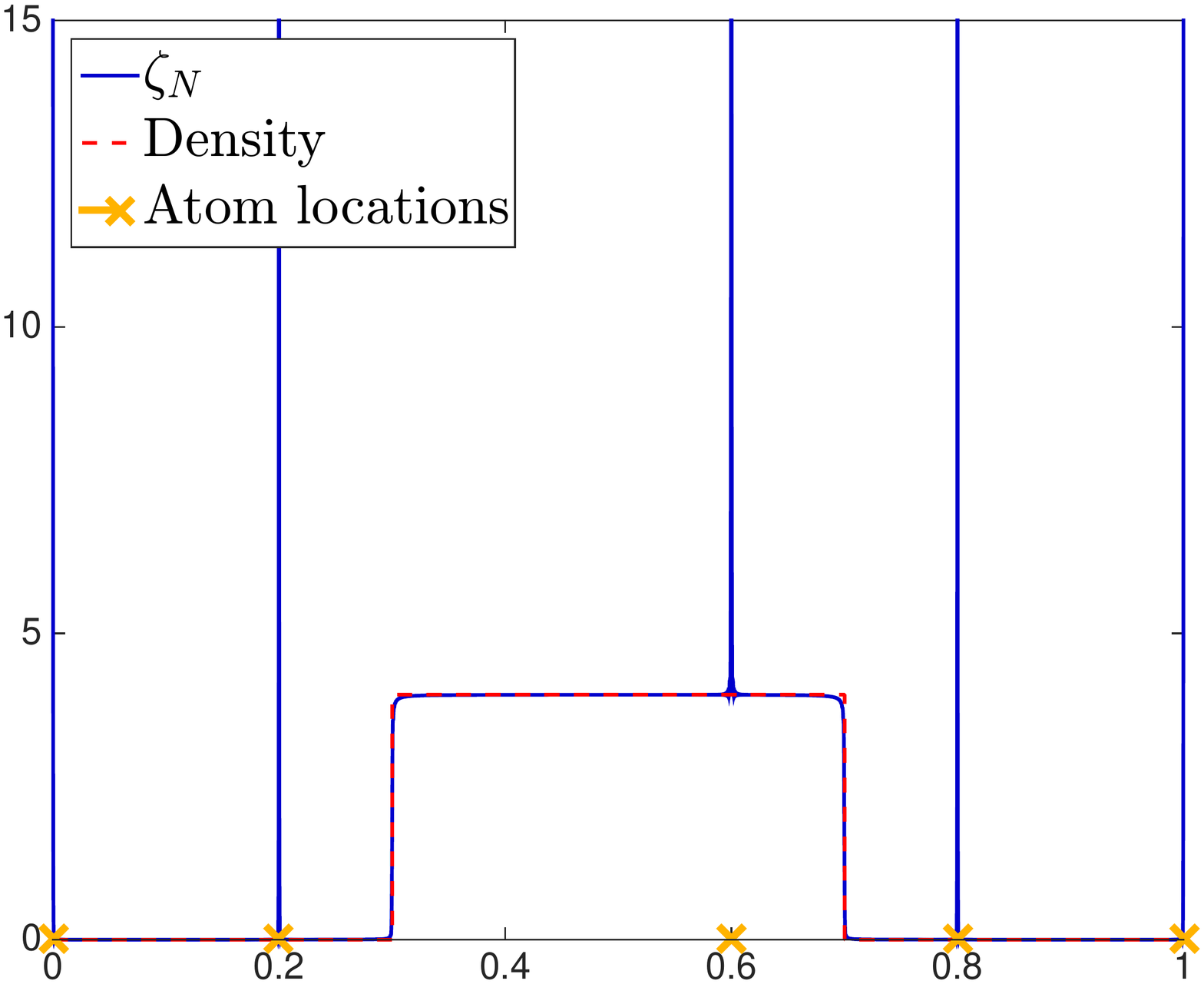}}

\put(120,0){\footnotesize $\theta$}
\put(340,0){\footnotesize $\theta$}
\put(150,125){\footnotesize $N=100$}
\put(364,125){\footnotesize $N=1000$}

\end{picture}
\caption{\figCaptionSize Example~\ref{ex:ac+at} -- Approximation of the absolutely continuous part of the spectrum using the CD kernel (Theorem~\ref{thm:densConv}).}
\label{fig:artifNK}
\end{figure*}

\begin{figure*}[th]
\begin{picture}(140,160)
\put(20,0){\includegraphics[width=70mm]{./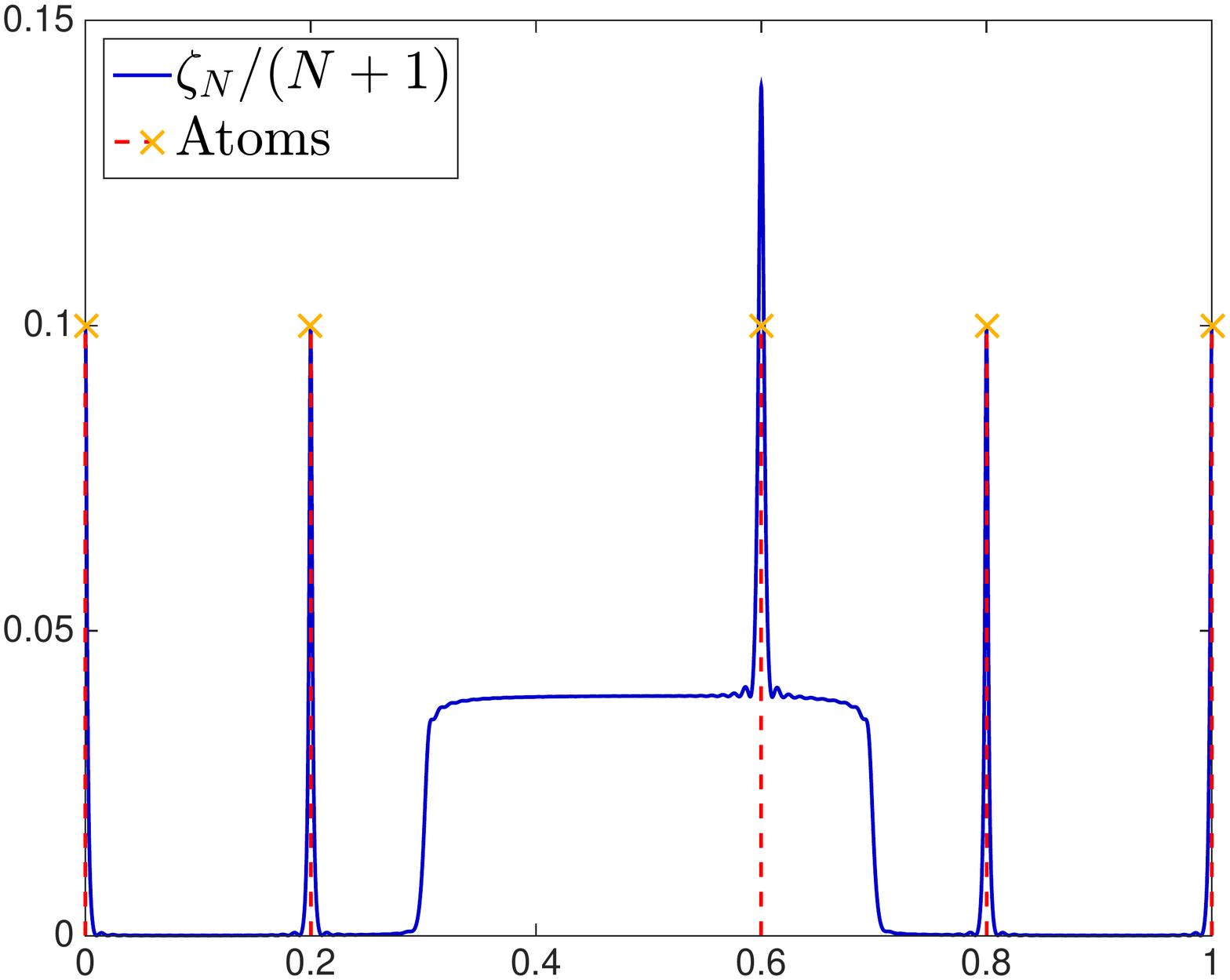}}
\put(235,0){\includegraphics[width=70mm]{./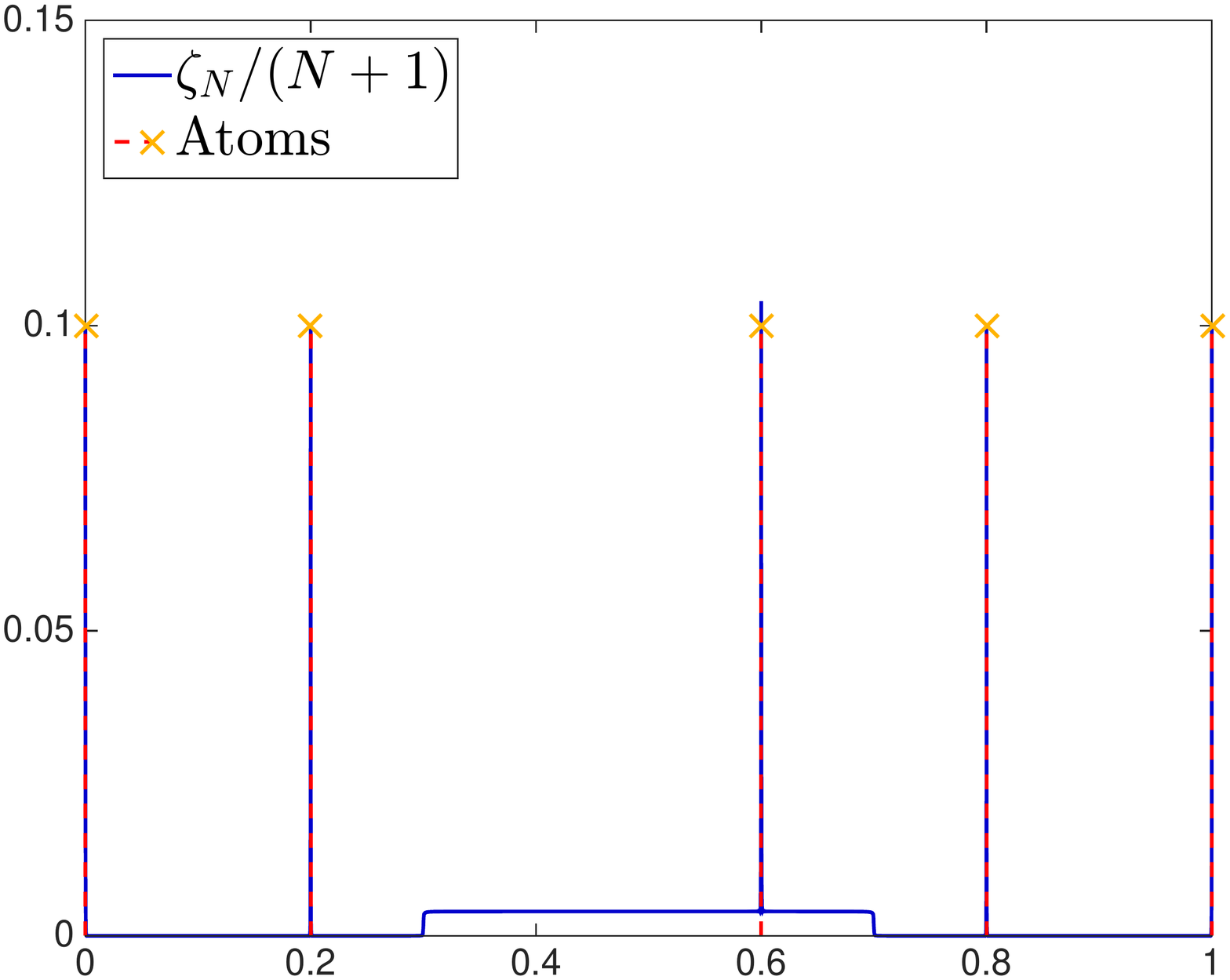}}

\put(120,0){\footnotesize $\theta$}
\put(340,0){\footnotesize $\theta$}
\put(150,125){\footnotesize $N=100$}
\put(364,125){\footnotesize $N=1000$}

\end{picture}
\caption{\figCaptionSize Example~\ref{ex:ac+at} -- Approximation of the atomic part of the spectrum using the CD kernel (Theorem~\ref{thm:atoms}).}
\label{fig:artif1K}
\end{figure*}

\end{example}

\begin{example}[Measure with SC part]\label{ex:sc_cd}
In this example we investigate the effect of the singular continuous part~$\mu_{\mr{sc}}$. We use the same atomic and AC parts as in the previous example, only add the singular continuous part $\mu_{\mr{sc}}$ equal to the Cantor measure on $[0,1]$ whose moments
\begin{equation}\label{eq:cantorMom}
\int_{[0,1]} e^{i2\pi \theta k} \,d\mu_{\mr{sc}}(\theta) = e^{i\pi\theta k}\prod_{n=1}^\infty \cos\Big(\frac{2\pi\theta k}{3^n}\Big)
\end{equation}
are derived readily from its characteristic function known in closed form (see, e.g., \cite{lukacs1970characteristic}). Figure~\ref{fig:artifNK_cantor} and Figure~\ref{fig:artif1K_cantor}  are analogous to Figures~\ref{fig:artifNK} and~\ref{fig:artif1K} for this situation\footnote{Note that the support of the Cantor measure depicted in Figures~\ref{fig:artifNK_cantor} and~\ref{fig:artif1K_cantor}  is a rough numerical approximation since the true support is the fractal Cantor set.}. Extraction of the atomic part is not particularly hampered as the conclusion of Theorem~\ref{thm:atoms} holds for all $\theta \in [0,1]$.  This is witnessed by Figure~\ref{fig:artif1K_cantor} where we observe $\zeta_N(\theta)/(N+1)$ converging to zero whenever $\theta$ is not in the support of the atomic part. On the other hand, extraction of the density $\rho$ is more difficult in this case since the conclusion of Theorem~\ref{thm:densConv} holds only almost everywhere (see Remark~\ref{rem:convNK}). Nevertheless, the behavior of $\zeta_N(\theta)$ for different values of $N$ is still a guideline for disentangling the contributions of the AC and SC parts. Whenever $\theta$ lies outside of the support of $\mu_{\mr{sc}}$ and $\mu_{\mr{at}}$, we expect convergence to $\rho(\theta)$ whereas otherwise we expect a divergent behavior. This is witnessed by Figure~\ref{fig:artifNK_cantor}, where we observe this behavior; notice in particular the different rate of divergence for $\theta$ belonging to the support of the SC part versus for $\theta$ being in the support of the atomic part (see Remark~\ref{rem:convNK}). We shall re-investigate this example using the tools described in the next section which will further help disentangling the AC and SC parts.

\begin{figure*}[th]
\begin{picture}(140,160)
\put(20,0){\includegraphics[width=70mm]{./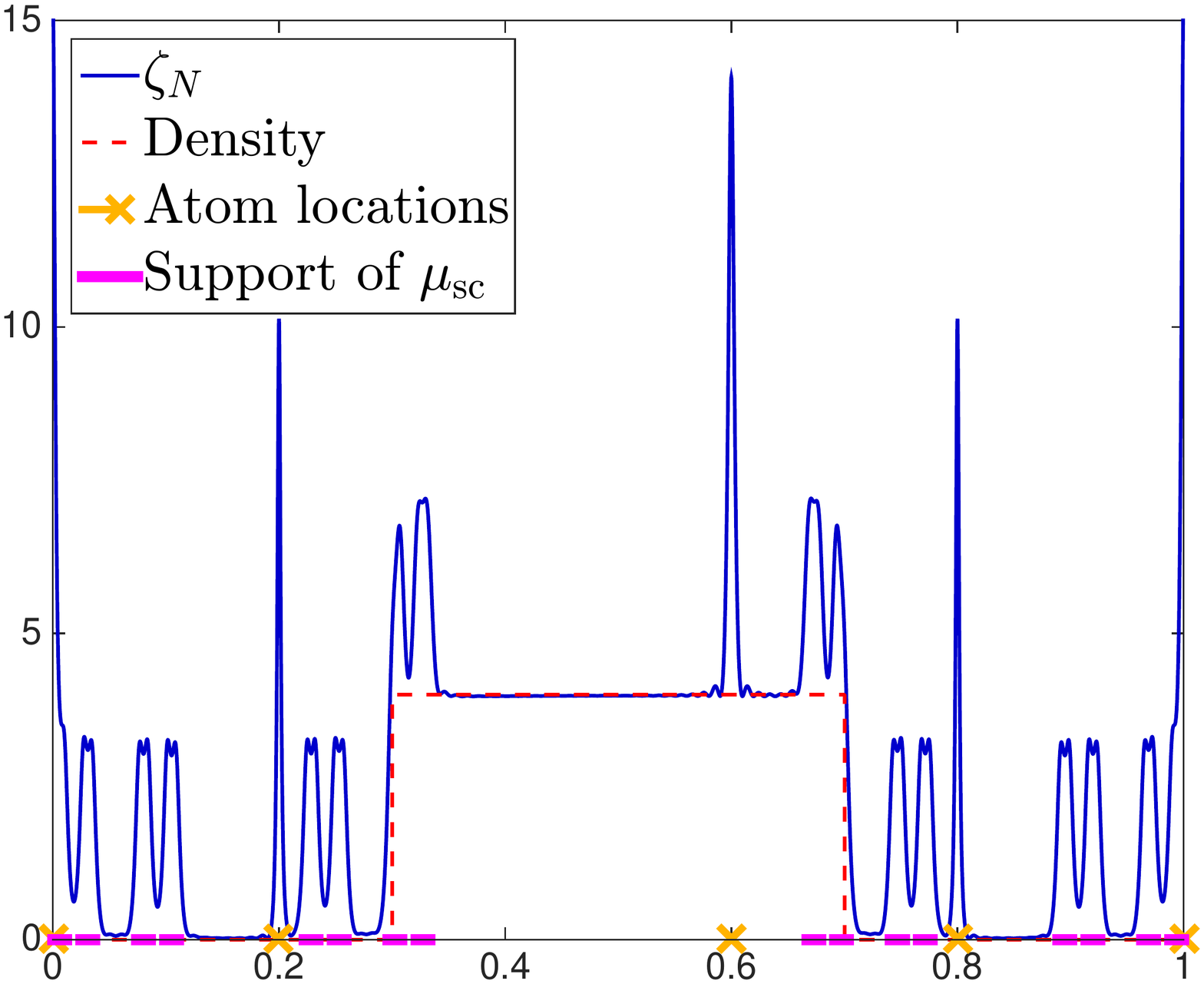}}
\put(235,0){\includegraphics[width=70mm]{./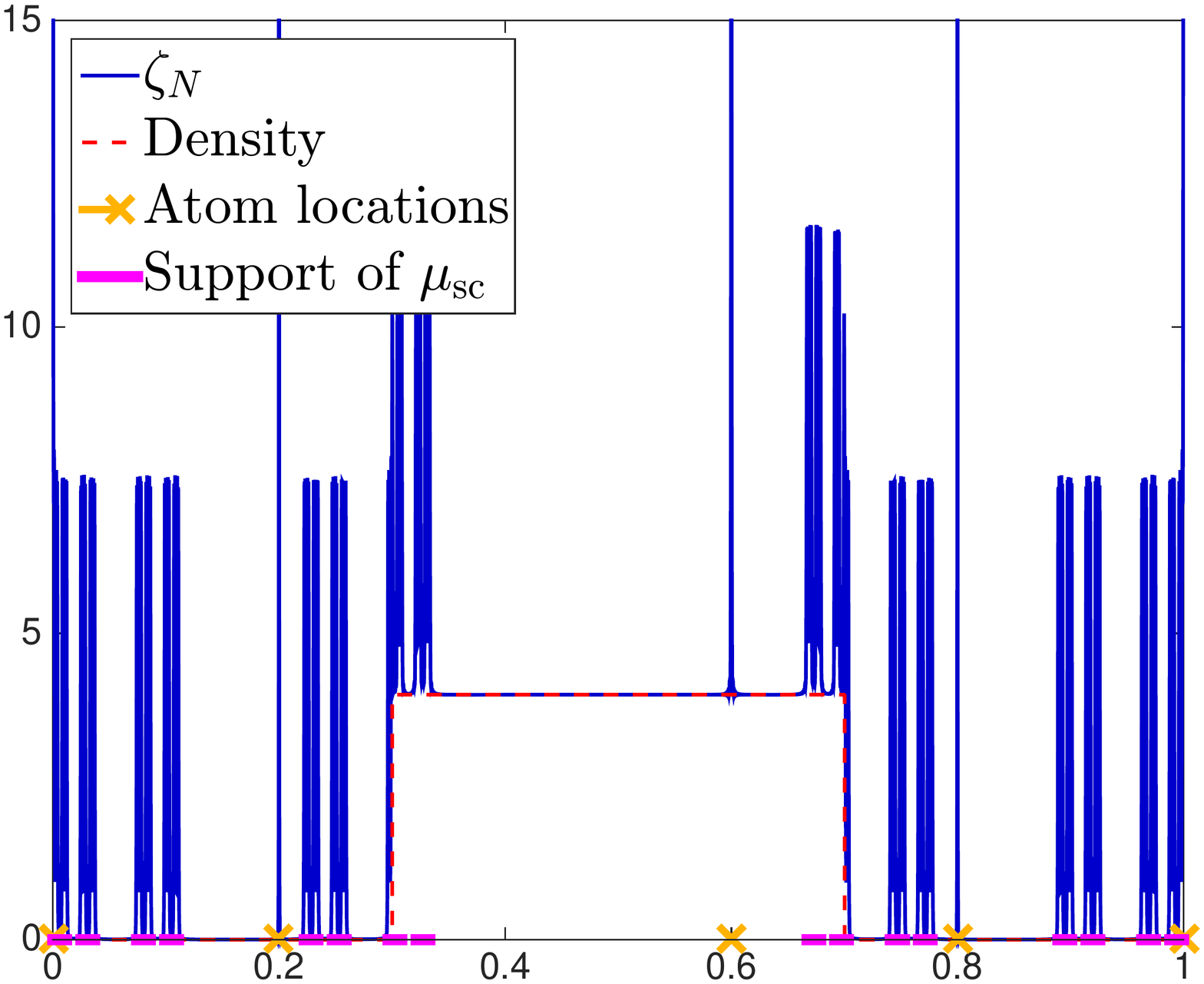}}

\put(120,0){\footnotesize $\theta$}
\put(340,0){\footnotesize $\theta$}
\put(150,125){\footnotesize $N=100$}
\put(364,125){\footnotesize $N=1000$}

\end{picture}
\caption{\figCaptionSize Example~\ref{ex:sc_cd} --  Approximation of the absolutely continuous part of the spectrum using the CD kernel (Theorem~\ref{thm:densConv}).}
\label{fig:artifNK_cantor}
\end{figure*}

\begin{figure*}[th]
\begin{picture}(140,160)
\put(20,0){\includegraphics[width=70mm]{./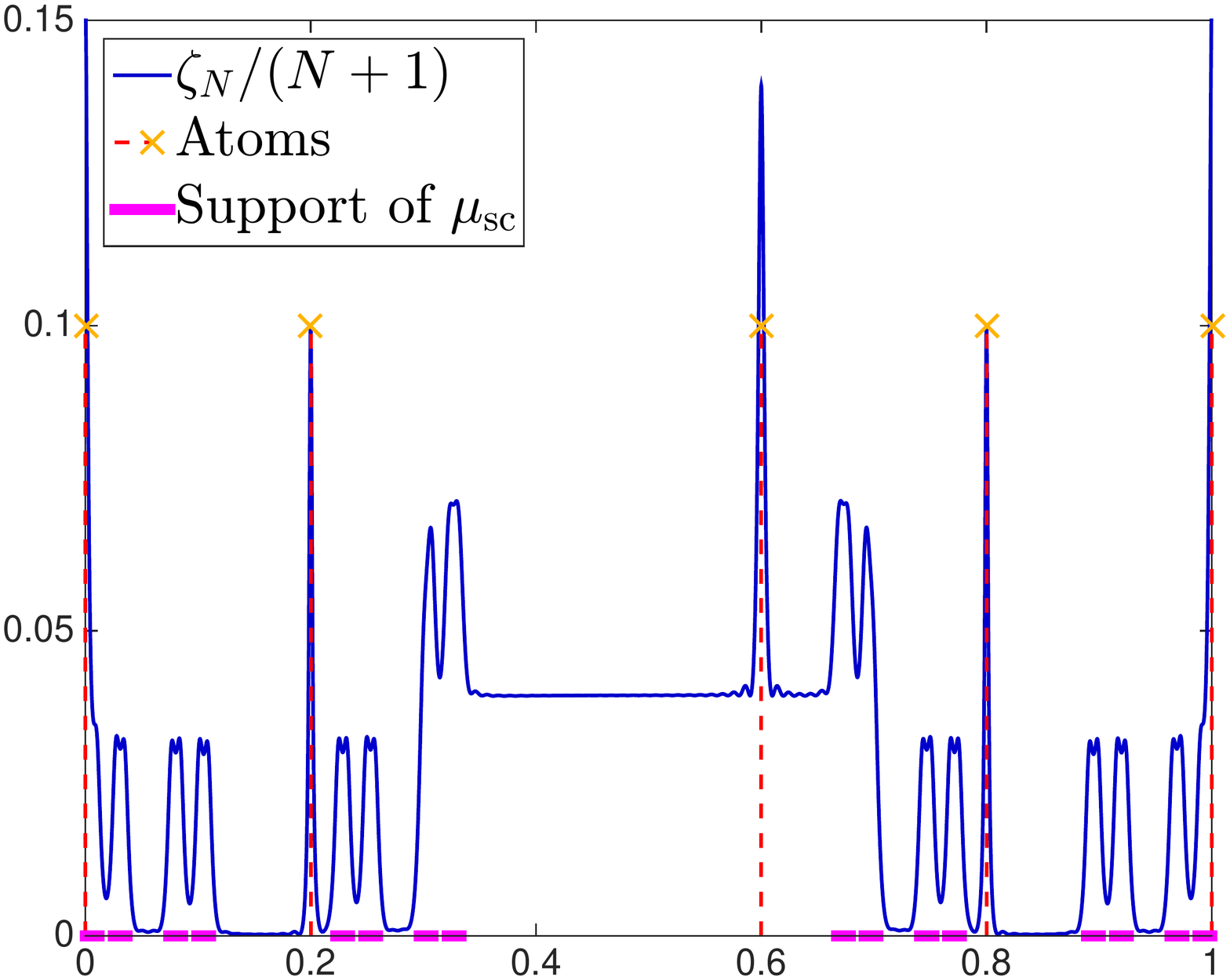}}
\put(235,0){\includegraphics[width=70mm]{./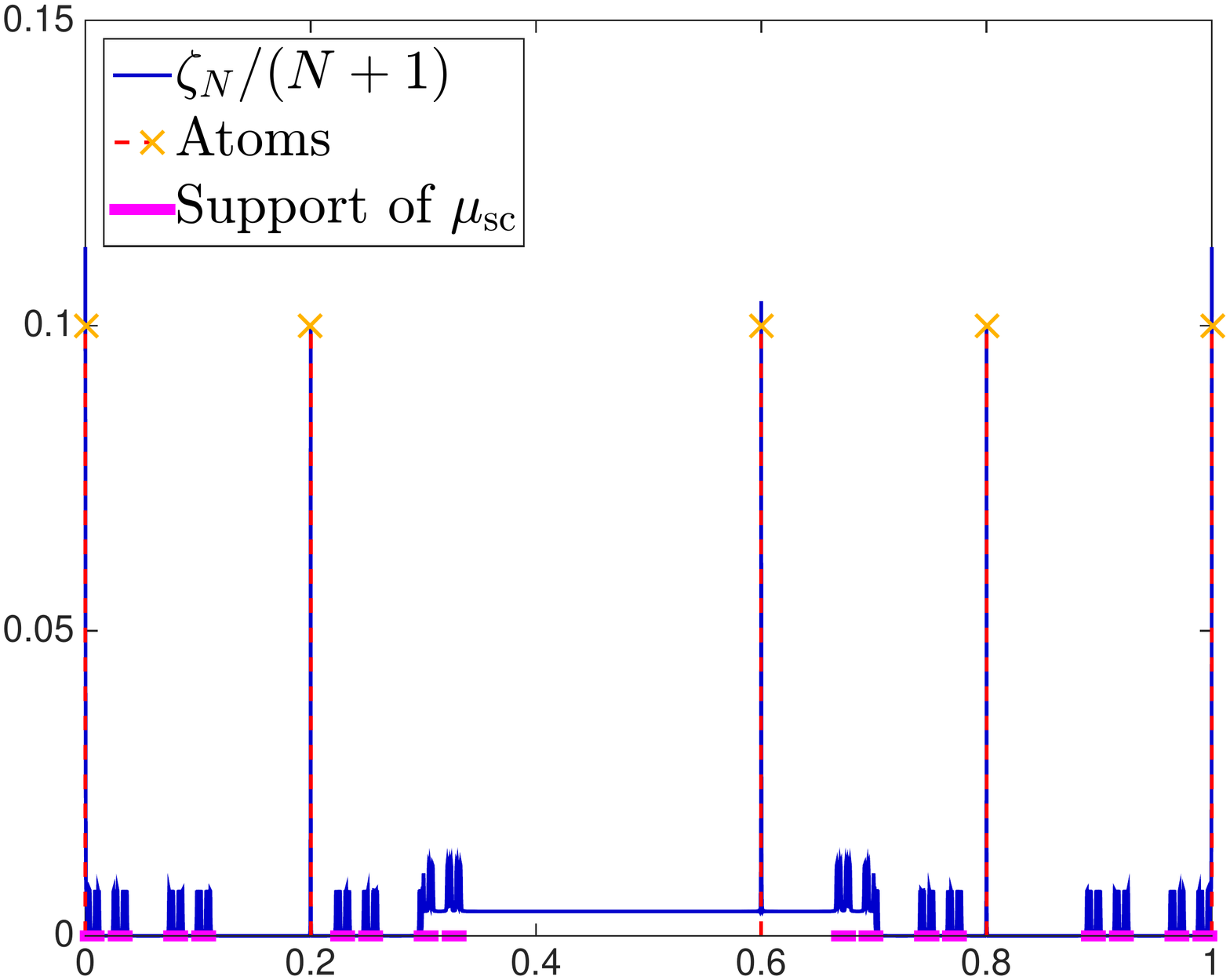}}

\put(120,0){\footnotesize $\theta$}
\put(340,0){\footnotesize $\theta$}
\put(150,125){\footnotesize $N=100$}
\put(364,125){\footnotesize $N=1000$}

\end{picture}
\caption{\figCaptionSize Example~\ref{ex:sc_cd} -- Approximation of the atomic part of the spectrum using the CD kernel (Theorem~\ref{thm:atoms}).}
\label{fig:artif1K_cantor}
\end{figure*}

\end{example}

\subsubsection{Numerical computation of the CD kernel}\label{sec:CDnumerics}
In this section we discuss numerical aspects of evaluation of the modified CD kernel~(\ref{eq:CDmodified}). Fortunately, the matrix $\tilde{\mathbf{M}}_N$ is positive definite Hermitian Toeplitz matrix which can be inverted in $O(N^2)$ or even $O(N\log^2(N))$ floating point operations in a numerically stable way, as opposed to $O(N^3)$ for a general matrix. See, e.g.,~\cite{trench1964algorithm,strang1986proposal,brent1988old}. Hence the CD kernel can be accurately evaluated even for a very large number of moments (e.g., $N \gg 1000$). Whether or not one should precompute the inverse or rather compute the Cholesky factorization of $\tilde{\mathbf{M}}_N$ (also with $O(N^2)$ complexity~\cite{stewart1997cholesky}) depends on the number of point evaluations of $\tilde{K}_N(z,s)$ one expects to carry out. In our applications we required evaluation of $\tilde{K}_N(e^{i2\pi\theta},e^{i2\pi\theta})$ on a very fine grid of $\theta$ in which case direct inversion was preferable.

If one requires also direct access to the orthonormal polynomials $\tilde\varphi_i$ of $\tilde{\mu}$, then Cholesky factorization $\tilde{\mathbf{M}}_N = L^H  L$ is preferred since we have
\begin{equation}\label{eq:cholesky}
\begin{bmatrix}\tilde\varphi_0(z) \\ \tilde\varphi_1(z)\\ \vdots \\ \tilde\varphi_N(z)   \end{bmatrix} =  L^{-H} \begin{bmatrix} 1 \\ z \\ \vdots \\z^N  \end{bmatrix},
\end{equation}
where we used the fact that $\tilde{\mathbf{M}}_N = \int_{\Tb} \psi_N\psi_N^H\,d\tilde{\mu}$ (see~(\ref{eq:momMat})).

The orthonormal polynomials~(\ref{eq:cholesky}) have a number of interesting properties, one of which being the relation of their zeros to the eigenvalues of the dynamic mode decomposition approximation of the Koopman operator $U$. This is explored in Section~\ref{sec:dmd_relation}.

\subsection{Weak approximation of $\mu$}\label{sec:weakApprox}
In Section~\ref{sec:CD} we showed how the atomic and absolutely continuous parts of $\mu$ can be recovered. In this section we show how to construct approximations $\mu_N$ to $\mu$  such that $\mu_N$ converges weakly\footnote{A sequence of measures $(\mu_N)_{N=1}^\infty$ converges to $\mu$ weakly if $\int f \, d\mu_N \to \int f\,d\mu $ for all continuous bounded functions.} to $
\mu$. This is equivalent to saying that the distribution function $F_N(t) = \mu_N([0,t])$ of $\mu_N$ converges pointwise to $F = \mu([0,t])$ at every point of continuity of~$F$. Since the support of our measures is compact, all continuous test functions are automatically bounded, hence the notion of weak convergence we refer to throughout this article coincides with the classical weak-$\ast$ convergence.

We describe two  methods of constructing $\mu_N$. The first approximation results in the approximation $\mu_N$ being absolutely continuous with respect to the Lebesgue measure with a smooth density $\rho_N$, whereas the second approximation results in a purely atomic approximation  $\mu_N$.

\subsubsection{Ces\`aro sums}
The first method uses the classical Ces\`aro summation. Given moments $(m_k)_{k=-N}^N$, with $m_{-k} = \bar m_k$, the $n$-th partial sum of the Fourier series is defined by
\begin{equation}\label{eq:Sndef}
S_n(\theta) := \sum_{k=-n}^n e^{i 2\pi \theta k} \bar m_k.
\end{equation}
Then we define the density $\rho_N^{\mr{CS}}$ to be the Ces\`aro sum
\begin{equation}\label{eq:cesaro_means}
\rho_N^{\mr{CS}}(\theta) := \frac{1}{N+1}\sum_{n=0}^N S_n(\theta)
\end{equation}
and we set for any $0\le a\le b\le 1$
\begin{equation}\label{eq:mu_Nf_cesaro}
\mu_N^{\mr{CS}}([a,b]) := \int_{a}^b \rho_N^{\mr{CS}}(\theta)\,d\theta.
\end{equation}
Hence the cumulative distribution function of $\mu_{N}^{\mr{CS}}$ is
\begin{equation}\label{eq:F_N_cesaro}
F_N^{\mr{CS}}(t) = \int_0^t \rho_N^{\mr{CS}}(\theta)\,d\theta.
\end{equation}

\begin{theorem}\label{thm:cdf_conv_CS}
Let $\rho_N^{\mr{CS}}$ be the Ces\`aro sum~(\ref{eq:cesaro_means}) associated to the moment sequence~(\ref{eq:momFour}) of a positive measure $\mu$ on $[0,1]$ and let $\mu_N^{\mr{CS}}$ and $F_N^{\mr{CS}}$ be defined by~(\ref{eq:mu_Nf_cesaro}) and (\ref{eq:F_N_cesaro}). Then $F_N^{\mr{CS}}(0) = 0$, $F_N^{\mr{CS}}(1) = \mu([0,1])$ and
\[
\lim_{N\to\infty} F_N^{\mr{CS}}(t) = \frac{F(t)+F^-(t)}{2},\quad t\in (0,1),
\]
where $F = \mu([0,t])$ is the right-continuous distribution function of $\mu$ and $F^-(t) = \mu([0,t))$ its left limit at $t$. In particular, $\mu_N^{\mr{CS}}$ converges weakly to $\mu$.
\end{theorem}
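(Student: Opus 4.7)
My plan is to recognize the Cesàro sum of the partial Fourier sums as a Fejér kernel convolution with $\mu$, then exploit the nonnegativity and classical endpoint behavior of the Fejér kernel to pass to the limit via dominated convergence.

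\textbf{Step 1 (Fejér representation).} Substituting $\bar m_k = \int_{[0,1]} e^{-i2\pi \phi k}\,d\mu(\phi)$ into~(\ref{eq:Sndef}) and exchanging sum and integral gives
\[
S_n(\theta) = \int_{[0,1]} D_n(\theta-\phi)\,d\mu(\phi),\qquad D_n(u):=\sum_{k=-n}^{n}e^{i2\pi u k}.
\]
Averaging over $n=0,\dots,N$ yields $\rho_N^{\mr{CS}}(\theta)=\int_{[0,1]} \Phi_N(\theta-\phi)\,d\mu(\phi)$ with $\Phi_N = \frac{1}{N+1}\sum_{n=0}^N D_n$ the Fejér kernel, which is $1$-periodic, nonnegative, satisfies $\int_0^1 \Phi_N(u)\,du = 1$, and has $\int_{|u|>\delta,\,|u|\le 1/2} \Phi_N(u)\,du \to 0$ for every $\delta>0$.

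\textbf{Step 2 (Endpoint values).} Since $\int_0^1 e^{i2\pi\theta k}\,d\theta = \delta_{k,0}$, integrating~(\ref{eq:cesaro_means}) over $[0,1]$ collapses to the $k=0$ term and gives $F_N^{\mr{CS}}(1) = m_0 = \mu([0,1])$; the equality $F_N^{\mr{CS}}(0)=0$ is immediate from~(\ref{eq:F_N_cesaro}).

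\textbf{Step 3 (Fubini reduction).} For $t\in(0,1)$, Fubini gives
\[
F_N^{\mr{CS}}(t) = \int_{[0,1]} G_N(t,\phi)\,d\mu(\phi),\qquad G_N(t,\phi):=\int_0^t \Phi_N(\theta-\phi)\,d\theta.
\]
Nonnegativity of $\Phi_N$ and periodicity with $\int_0^1 \Phi_N = 1$ yield the uniform bound $0\le G_N(t,\phi)\le 1$.

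\textbf{Step 4 (Pointwise limit of $G_N$).} Using the mass-concentration property of $\Phi_N$ at the origin of $\Tb$ together with its symmetry $\Phi_N(-u)=\Phi_N(u)$, a standard computation (change of variables $u=\theta-\phi$, then split the integral around the singularity $u=0$) gives as $N\to\infty$:
\[
G_N(t,\phi)\;\longrightarrow\;
\begin{cases}
1, & \phi\in(0,t),\\
0, & \phi\in(t,1),\\
\tfrac{1}{2}, & \phi\in\{0,\,t,\,1\}.
\end{cases}
\]
The half-values at $\phi=0$ and $\phi=1$ come from the fact that $[0,t]$ (respectively $[-1,t-1]$, after periodic shift) captures only one side of the symmetric peak of $\Phi_N$; at $\phi=t$ the same reasoning applies to the other endpoint. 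This step is the main technical point, and it is a direct consequence of the classical Fejér summability theorem (the Dirichlet-type $(F(t)+F^{-}(t))/2$ behavior is exactly this phenomenon on the level of the indicator $\mathbf{1}_{[0,t]}$).

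\textbf{Step 5 (Dominated convergence and matching).} Apply dominated convergence with dominating constant $1$ and the finite measure $\mu$ to conclude
\[
\lim_{N\to\infty} F_N^{\mr{CS}}(t) = \mu\bigl((0,t)\bigr) + \tfrac{1}{2}\mu(\{t\}) + \tfrac{1}{2}\bigl(\mu(\{0\})+\mu(\{1\})\bigr).
\]
Invoking the symmetry convention $\mu(\{0\})=\mu(\{1\})$ from Footnote~\ref{foot:symmetry}, the last term equals $\mu(\{0\})$, so
\[
\lim_{N\to\infty} F_N^{\mr{CS}}(t) = \mu(\{0\}) + \mu\bigl((0,t)\bigr) + \tfrac{1}{2}\mu(\{t\}) = \tfrac{F(t)+F^-(t)}{2},
\]
where the last identity uses $F(t)=\mu(\{0\})+\mu((0,t))+\mu(\{t\})$ and $F^-(t)=\mu(\{0\})+\mu((0,t))$.

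\textbf{Step 6 (Weak convergence).} At every continuity point $t$ of $F$ the jump $\mu(\{t\})$ vanishes, so $F_N^{\mr{CS}}(t)\to F(t)$. Together with the matching total masses $F_N^{\mr{CS}}(1)=F(1)$, this pointwise convergence of distribution functions on a dense subset of continuity points is equivalent (via Helly--Bray / Portmanteau on the compact interval $[0,1]$) to weak convergence $\mu_N^{\mr{CS}}\rightharpoonup \mu$.

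The bulk of the argument is routine once the Fejér representation is in place; the only delicate point is Step~4, and specifically verifying that the $\{0,1\}$-endpoint contributions combine correctly under the symmetry convention to reproduce the Dirichlet-type average.
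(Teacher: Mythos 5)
Your proof is correct and follows essentially the same route as the paper's: both recognize $\rho_N^{\mr{CS}}$ as a Fejér-kernel convolution with $\mu$, apply Fubini to reduce $F_N^{\mr{CS}}(t)$ to $\int (I_{[0,t]}\ast\Phi_N)\,d\mu$, compute the pointwise limits $1, 0, \tfrac12$ of the convolved indicator, and finish with dominated convergence and the symmetry convention $\mu(\{0\})=\mu(\{1\})$.
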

\begin{proof}
Using~(\ref{eq:Sndef}), (\ref{eq:cesaro_means}) and (\ref{eq:momFour}), we get
\begin{align*}
\rho_N^{\mr{CS}}(x) &=  \frac{1}{N+1}\sum_{n=0}^N S_n(x)= \int_{[0,1]} \frac{1}{N+1}\sum_{n=0}^N \sum_{k=-n}^n e^{i2\pi k (x-\theta) }\,d\mu(\theta)\\ &= \int_{[0,1]} G_{N+1}(2\pi(x-\theta))\,d\mu(\theta)
\end{align*}
where
\begin{equation}\label{eq:fejerKernel}
G_N(x)  =  \frac{1}{N}\sum_{n=0}^{N-1} \sum_{k=-n}^n e^{i k x } = \frac{1}{N}\Big(\frac{\sin \frac{Nx}{2}}{\sin\frac{x}{2}}\Big)^2
\end{equation}
is the $N$-th Fej\'er kernel. Therefore
\begin{align*}
F_N^{\mr{CS}}(t) &=\int_0^1 I_{[0,t]} \rho_N^{\mr{CS}}(x) dx= \int_0^1 \int_{[0,1]}  I_{[0,t]}G_{N+1}(2\pi(x-\theta))\,d\mu(\theta) dx\\ &= \int_{[0,1]}\int_0^1   I_{[0,t]}G_{N+1}(2\pi(x-\theta))\, dx\, d\mu(\theta) = \int_{[0,1]}  g_{N,t}(\theta)\,d\mu(\theta),
\end{align*}
where $g_{N,t}(\theta) := (I_{[0,t]}\ast G_{N+1})(\theta)$ is the convolution of the Fej\'er kernel with the indicator function of the interval $[0,t]$. Here we used the Fubini theorem and the symmetry of the Fej\'er kernel. By basic properties of the Fej\'er kernel~(e.g.,~\cite[Ch. III, p. 88,\,89]{zygmundTrigo}) we have for any $t\in (0,1)$
\[
g_{\infty,t}(\theta) := \lim_{N\to\infty}g_{N,t}(\theta) = \begin{cases} 1/2 & \theta = 0 \\
												   1 & \theta \in (0,t) \\
												   1/2 & \theta = t \\
												   	 0 & \theta \in (t,1) \\
												   	 1/2 & \theta = 1										  \end{cases}
\]
and $g_{\infty,t}(\theta) = 0$ for all $\theta\in[0,1]$ for $t = 0$ and $g_{\infty,t}(\theta) = 1$ for all $\theta\in[0,1]$ for $t = 1$. Since $|I_{[0,t]}| \le 1$, we also have $|g_{N,t}| \le 1$ and hence by the dominated convergence theorem we have 
\[
\lim_{N\to\infty} F_N^{\mr{CS}}(t) = \int_{[0,1]} g_{\infty,t}(\theta) d\mu(\theta) = \begin{cases} \mu([0,t)) + \frac{1}{2}\mu(\{t\}) & t\in (0,1)\\
0 & t = 0\\
\mu([0,1]) & t = 1,
 \end{cases}
\]
where in the first line we used the fact that $\mu(\{0\}) = \mu(\{1\})$ (see Footnote~\ref{foot:symmetry}). Therefore, for any $t\in(0,1)$ \[
\frac{F(t)+F^-(t)}{2} = \frac{\mu([0,t])+ \mu([0,t))}{2} = \mu([0,t)) + \frac{1}{2}\mu(\{t\}) = \lim_{N\to\infty} F_N^{\mr{CS}}(t)
\]
as desired. This also implies convergence of $F_N^{\mr{CS}}(t)$ to $F(t)$ in every point of continuity of $F$ and hence weak convergence of $\mu_N^{\mr{CS}}$ to $\mu$.
\end{proof}

\subsubsection{Quadrature}
In this section we develop purely atomic approximations $\mu_N$ that converge weakly to $\mu$. For this we use the so called quadrature, i.e., we seek a measure $\mu_N$ in the form
\begin{equation}\label{eq:muN_quad}
\mu_N^{\mr{Q}} = \sum_{j=0}^{n_\mr{q}} \gamma_j \delta_{\eta_j},
\end{equation}
where $\gamma_j \ge 0$ and $\eta_j\in [0,1]$ are nonnegative weights and atom locations, respectively. The weights and locations are selected such that
\[
\int_{[0,1]} h_N\, d\mu_N^{\mr{Q}}(\theta) = \int_{[0,1]} h_N\, d\mu(\theta) 
\]
for all $h_N\in \mr{span}\big\{ e^{-i2\pi N\theta},\ldots,e^{i2\pi N\theta}  \big\}$, i.e., for all trigonometric polynomials of degree no more than $N$. This is equivalent to the moment matching condition
\begin{equation}\label{eq:momentMatching}
m_k = \int_{[0,1]} e^{i2\pi k \theta}\,d\mu_N^{\mr{Q}}(\theta) = \sum_{j=0}^{n_{\mr{q}}} \gamma_j e^{i2\pi k \eta_j}\quad \forall\; k\in\{0,\ldots,N\},
\end{equation}
where we use the fact that $m_{-k} = \bar{m}_k$. There is a well developed theory on choosing the locations $\eta_j$ such that nonnegative weights $\gamma_j$ satisfying~(\ref{eq:momentMatching}) exist. These locations are given by the zeros of the so called  paraorthogonal polynomials that can be readily computed from the orthonormal polynomials $\varphi$; see \cite[Ch. 2.15]{simonSzegoDescendants} and \cite[Sec. 7]{jones1989moment}. It is highly relevant for our study that, as $N\to\infty$,  these zeros become uniformly distributed on the unit circle~\cite[Theorem~2.15.4]{simonSzegoDescendants}. Therefore, since we typically work with large $N$, in order to avoid the numerically slightly troublesome process of polynomial root finding, we use directly a uniform grid of points
\begin{equation}\label{eq:grid_points}
\eta_j =  \frac{j}{n_{\mr{q}}},\quad j = 0,\ldots, n_{\mr{q}}
\end{equation}
with $n_{\mr{q}} > N$.
In order to ensure symmetry (see Footnote~\ref{foot:symmetry}), we impose the additional constraint
\[
\gamma_0 = \gamma_{n_{\mr{q}}}.
\]
Given the atom locations~(\ref{eq:grid_points}), the weights $\gamma_j$ are determined by solving the quadratic program (QP)
\begin{equation}\label{opt:quadrature_QP}
\begin{array}{lll}
\varepsilon^\ast_N =  \min\limits_{\gamma_0,\ldots,\gamma_{n_{\mr{q}}}} & \sum_{k=0}^N (m_k - \sum_{j=0}^{n_{\mr{q}}} \gamma_j e^{i2\pi k \eta_j})^2 \\
\hspace{1.35cm} \mathrm{s.t.} &  \gamma_j \ge 0, \quad j = 0,\ldots, n_{\mr{q}} \vspace{0.2mm}\\
&  \gamma_0 = \gamma_{n_\mr{q}} \vspace{0.2mm},
\end{array}
\end{equation}
where in the objective function we penalize the discrepancy in the moment matching condition~(\ref{eq:momentMatching}). In particular, whenever $n_{\mr{q}}$ is such that the optimal value $\varepsilon^\ast_N$ of~(\ref{opt:quadrature_QP}) is zero, the moment matching condition is satisfied exactly. Alternatively, one can solve a linear programming (LP) feasibility problem by casting the moment matching condition~(\ref{eq:momentMatching}) as a constraint. In this case, however, if $n_{\mr{q}}$ is such that exact matching cannot be achieved, then the LP does not provide us an approximation to $\mu_N$ whereas the QP~(\ref{opt:quadrature_QP}) provides an approximation whether or not an exact matching can be achieved. Computational complexity of solving a QP is comparable to that of an LP with many mature solvers existing for both (e.g., MOSEK, GUROBI, CPLEX and many others) and hence we prefer the QP~(\ref{opt:quadrature_QP}).

The following result is immediate.
\begin{theorem}\label{thm:cdf_conv_Q}
Let for each $N$ the measure $\mu_N^{\mr{Q}}$ be of the form~(\ref{eq:muN_quad}) with $(\gamma_j)_{j=0}^{n_{\mr{q}}}$ being an optimal solution to~(\ref{opt:quadrature_QP}). If the associated optimal values $\varepsilon_N^\ast$ converge to zero as $N\to \infty$, then $\mu_N^{\mr{Q}}$ converges weakly to $\mu$. In particular the distribution function of $\mu_N^{\mr{Q}}$
\begin{equation}
F_N^{\mr{Q}}(t) := \mu_N^{\mr{Q}}([0,t])=\sum_{\substack{j=0\\ \eta_j\le t}}^{n_\mr{q}} \gamma_j
\end{equation}
\end{theorem}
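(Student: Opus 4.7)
The plan is to derive termwise Fourier coefficient convergence from the hypothesis $\varepsilon_N^\ast \to 0$, then combine a compactness argument with the uniqueness of the trigonometric moment problem. Concretely, the optimal cost expands as
\[
\varepsilon_N^\ast = \sum_{k=0}^N \Big| m_k - \int_{[0,1]} e^{i2\pi k \theta}\,d\mu_N^{\mr{Q}}(\theta)\Big|^2,
\]
so for any fixed $k\in\Nnn$ and all $N\ge k$ the $k$-th term is bounded by $\varepsilon_N^\ast \to 0$. Hence for every fixed $k$ the $k$-th Fourier coefficient of $\mu_N^{\mr{Q}}$ converges to $m_k$, and in particular the total masses $\mu_N^{\mr{Q}}([0,1])=\sum_j \gamma_j$ converge to $m_0 = \mu([0,1])$, so they are uniformly bounded.

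Next, since every $\mu_N^{\mr{Q}}$ is a nonnegative Borel measure supported on the compact set $[0,1]$ with uniformly bounded mass, the family is tight. By Prokhorov's (equivalently Helly's) selection theorem, every subsequence of $(\mu_N^{\mr{Q}})_N$ admits a further subsequence $\mu_{N_j}^{\mr{Q}}$ converging weakly to some finite positive measure $\mu_\infty$ on $[0,1]$. Because each trigonometric monomial $\theta \mapsto e^{i2\pi k\theta}$ is continuous and bounded on $[0,1]$, weak convergence together with the termwise Fourier convergence established above gives
\[
\int_{[0,1]} e^{i2\pi k\theta}\,d\mu_\infty(\theta) = \lim_{j\to\infty} \int_{[0,1]} e^{i2\pi k\theta}\,d\mu_{N_j}^{\mr{Q}}(\theta) = m_k, \quad k\in\Nnn,
\]
and by conjugation the same identity extends to all $k\in\Zb$. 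The symmetry constraint $\gamma_0 = \gamma_{n_{\mr{q}}}$ built into the QP ensures that $\mu_\infty$ respects the convention $\mu_\infty(\{0\})=\mu_\infty(\{1\})$ of Footnote~\ref{foot:symmetry}, so $\mu_\infty$ may unambiguously be viewed as a measure on $\Tb$. By density of trigonometric polynomials in $C(\Tb)$, a positive measure on $\Tb$ is uniquely determined by its Fourier coefficients, hence $\mu_\infty = \mu$. Since every subsequence has a further subsequence converging to the same limit $\mu$, the full sequence $\mu_N^{\mr{Q}}$ converges weakly to $\mu$.

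Finally, the stated convergence of the distribution functions $F_N^{\mr{Q}}(t) \to F(t)$ at every continuity point of $F$ follows from the portmanteau characterization of weak convergence on $\Rb$. The main obstacle in the argument is ensuring that the weak limit of the subsequence is genuinely a measure on the circle (not merely on the interval) so that the trigonometric moment problem delivers uniqueness; this is exactly where the symmetry constraint imposed in~(\ref{opt:quadrature_QP}) and the compactness of $[0,1]$ combine to do the needed work. Every other step is routine once termwise Fourier convergence is in hand.
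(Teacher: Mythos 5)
Your proof is correct and follows the same route as the paper's one-line argument (pointwise moment convergence plus compactness of $\Tb$ gives weak convergence), just unpacked with the standard Helly/Prokhorov selection and moment-uniqueness steps. One small inaccuracy: the symmetry constraint $\gamma_0=\gamma_{n_{\mr{q}}}$ does \emph{not} pass to the weak limit (weak convergence is not compatible with point-mass evaluation), so you cannot conclude $\mu_\infty(\{0\})=\mu_\infty(\{1\})$ that way; but this is harmless, since any positive measure on $[0,1]$ pushes forward canonically to a measure on $\Tb$ where the trigonometric moment problem is determinate, and the distribution-function statement only concerns continuity points of $F$, where the boundary ambiguity is invisible.
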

converges to the distribution function $F(t)=\mu([0,t])$ in every point of continuity of $F$.

\begin{proof}
The condition $\varepsilon_N^\ast \to 0$ implies pointwise convergence of the moment sequences of $\mu_N$ to the moment sequence of $\mu$, which implies weak convergence of $
\mu_N$ to $\mu$ by compactness of $\Tb$.
\end{proof}

\begin{example}[Distribution functions, quantifying singularity]\label{ex:artif_cdf}
In this example we compare the distribution function obtained using the Ces\`aro sums and using quadrature. We use the same measure as in Example~\ref{ex:sc_cd}, i.e.,  $\mu_{\mr{at}} = 0.05\delta_{0} + 0.05\delta_{1} + 0.1\delta_{0.2}+0.1\delta_{0.6} + 0.1\delta_{0.8}$,  the density of the AC part is $\rho(\theta) = 4I_{[0.3,0.7]}(\theta)$ and the SC part is equal to the Cantor measure with moments given by~(\ref{eq:cantorMom}). Figure~\ref{fig:artif_cdf} depicts the comparison of $F_{N}^{\mr{CS}}$ and $F_{N}^{\mr{Q}}$, where the quadrature weights were obtained using~(\ref{opt:quadrature_QP}) with $n_\mr{q} = 10N$, which was sufficient for exact moment matching (i.e., $\varepsilon_N^\ast = 0$ in~(\ref{opt:quadrature_QP})). We observe a very good accuracy of both methods, with the piecewise constant $F_N^{\mr{Q}}$ being able to capture the fine features of $F$ slightly better than the smooth $F_N^{\mr{CS}}$, especially for $N=1000$. Note that we have also plotted the function
\[
F_{\zeta_N}(t) = \int_0^t \zeta_N(\theta) \, d\theta,
\]
where $\zeta_N$ is defined in~(\ref{eq:zeta_N}). This function is expected to be a good approximation to $F$ only if $\mu$ is absolutely continuous. In our example both singular parts of $\mu$ are non-zero and hence we expect a discrepancy. Note that the overall shape of $F_{\zeta_N}$ is similar to $F$ although the exact magnitude of increase at points of singularity is underestimated by $F_{\zeta_N}$. In fact, the ratio $F(1) / F_{\zeta_N}(1) = m_0 / F_{\zeta_N}(1)$ can be used to quantify the singularity of $\mu$. If $F(1) / F_{\zeta_N}(1)=1$, then  $\mu$ is AC; if $F(1) / F_{\zeta_N}(1) > 1$, then  $\mu$ contains a singular part. In fact, in view of Theorem~\ref{thm:densConv}, one expects $\lim_{N\to\infty} F(1) / F_{\zeta_N}(1) = \mu([0,1])/\mu_{\mr{ac}}([0,1])$. In addition, instead of the global quantity $F(1) / F_{\zeta_N}(1)$, one can also quantify singularity of $\mu$ locally by $(F(b)-F(a)) / (F_{\zeta_N}(b)-F_{\zeta_N}(a))$ for every $0\le a < b \le 1$. Since $F$ is not known, this quantity can be approximated by replacing $F$ with $F_N^{\mr{CS}}$ or $F_N^{\mr{Q}}$. In Figure~\ref{fig:artif_singul_quat} we plot
\begin{equation}\label{eq:singul_indicator}
\Delta_N(t_k) = \frac{F_N^{\mr{CS}}(t_{k+1})-F_N^{\mr{CS}}(t_k)}{F_{\zeta_N}(t_{k+1})-F_{\zeta_N}(t_k)} - 1
\end{equation}
for $t_k = 10^{-3}k, k =0,\ldots 10^3-1$. As expected we observe $\Delta_N(t_i) \approx 0$ when $t_i $ is outside the support of the singular parts and  $\Delta_N(t_i) > 0$ otherwise, although it appears that a large number of moments is needed to get an accurate estimate of the supports, especially the support of the SC part.


\begin{figure*}[h!]
\begin{picture}(140,180)
\put(20,0){\includegraphics[width=80mm]{./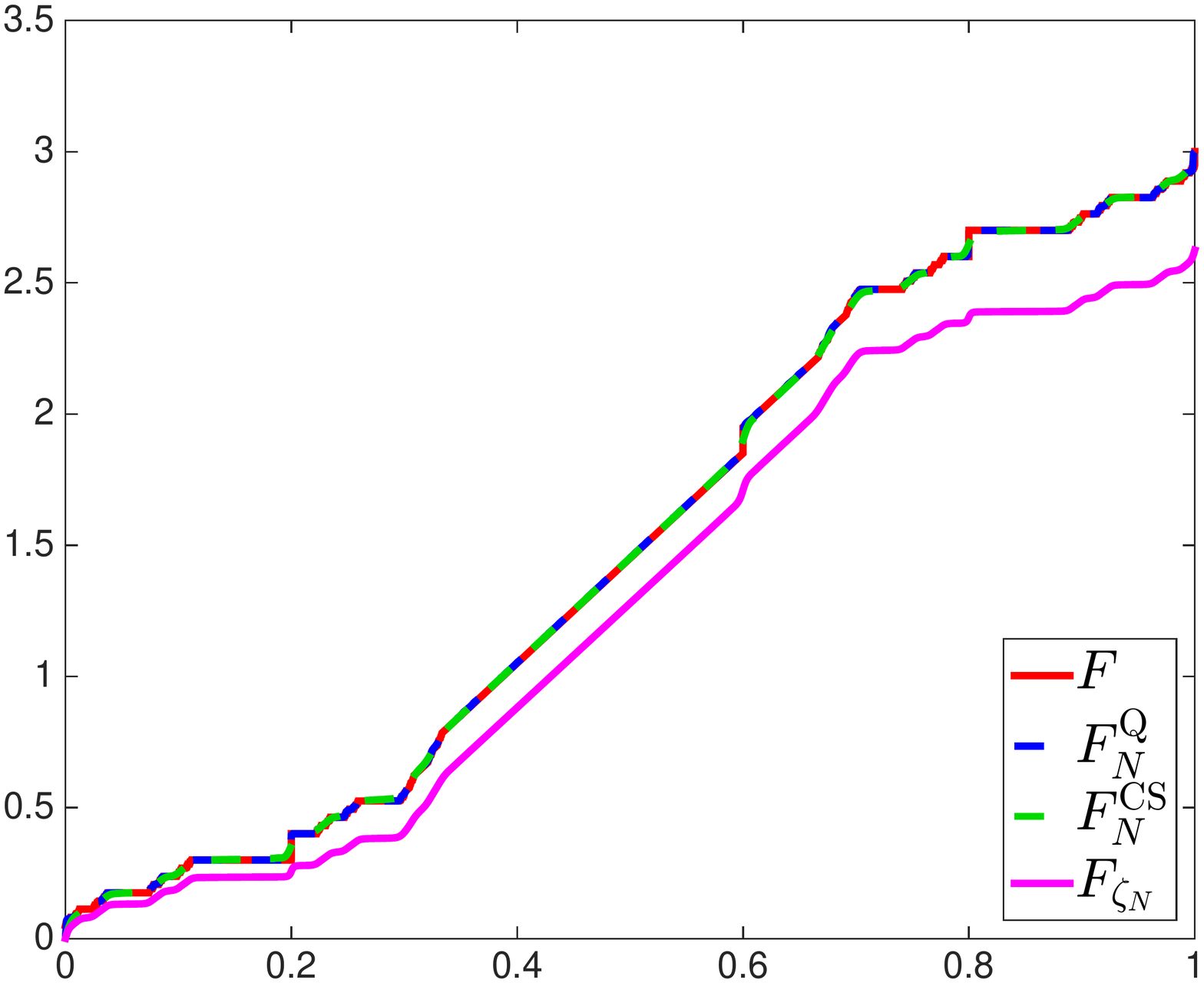}}
\put(235,0){\includegraphics[width=80mm]{./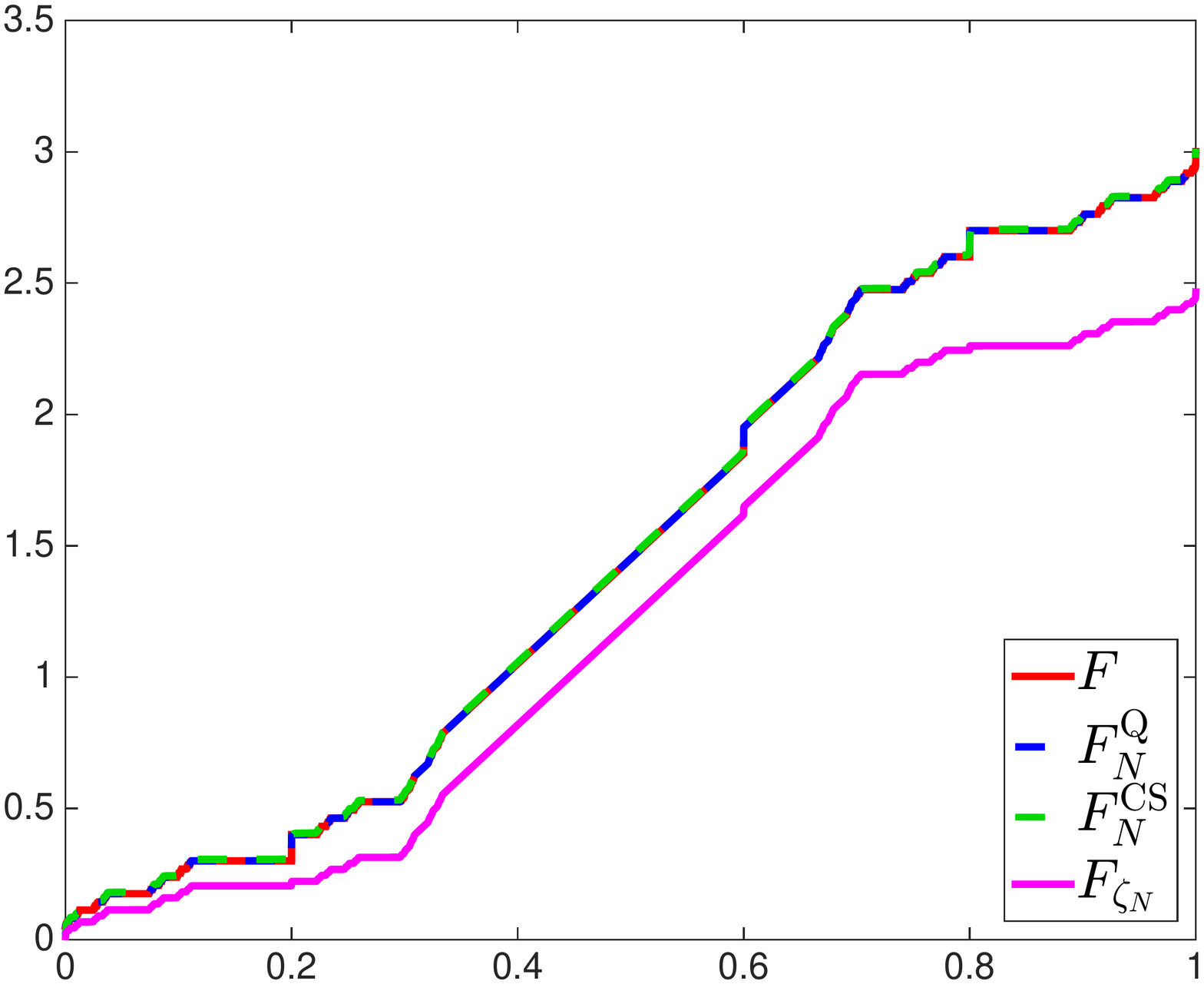}}
\put(55,85){\includegraphics[width=30mm]{./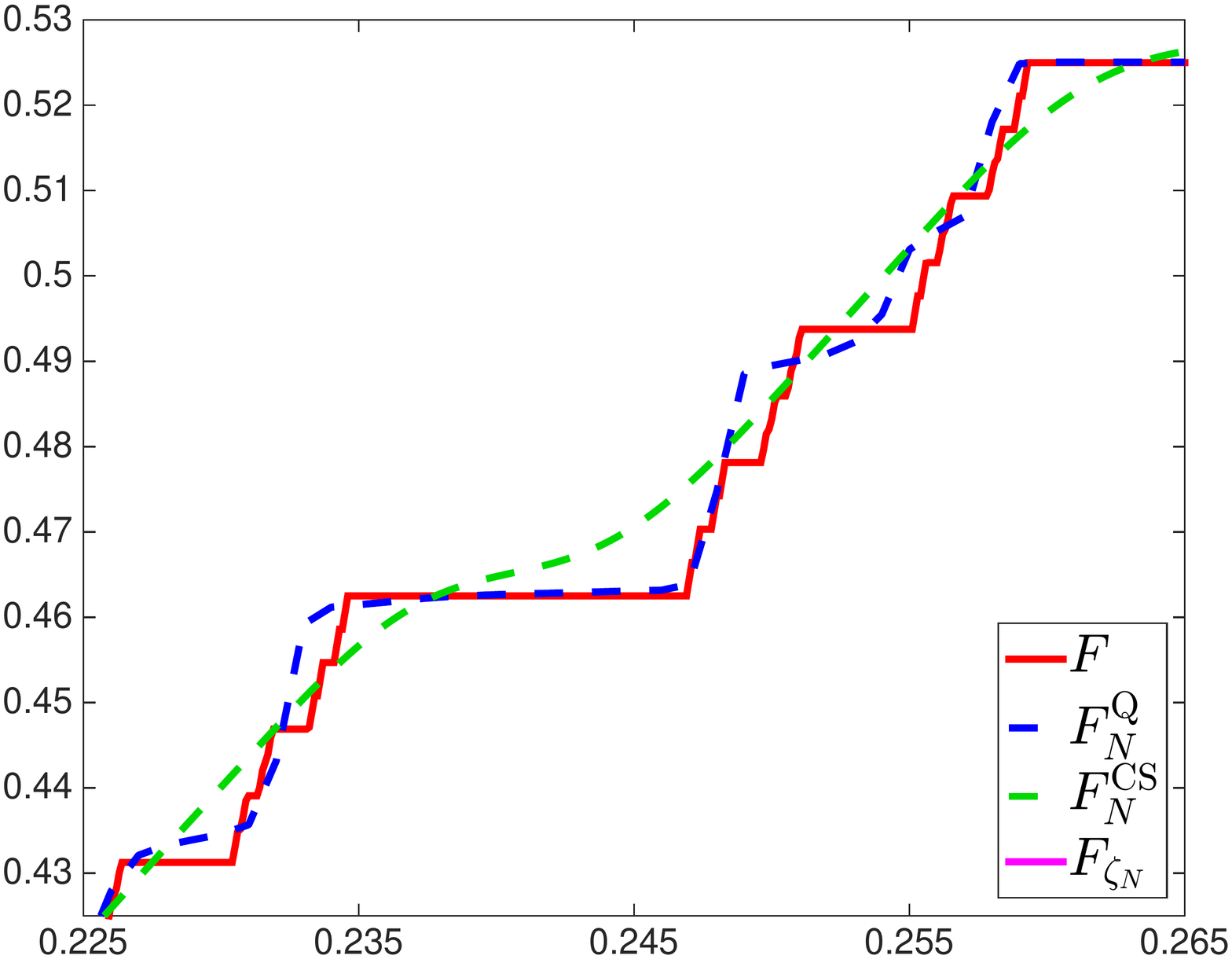}}
\put(270,85){\includegraphics[width=30mm]{./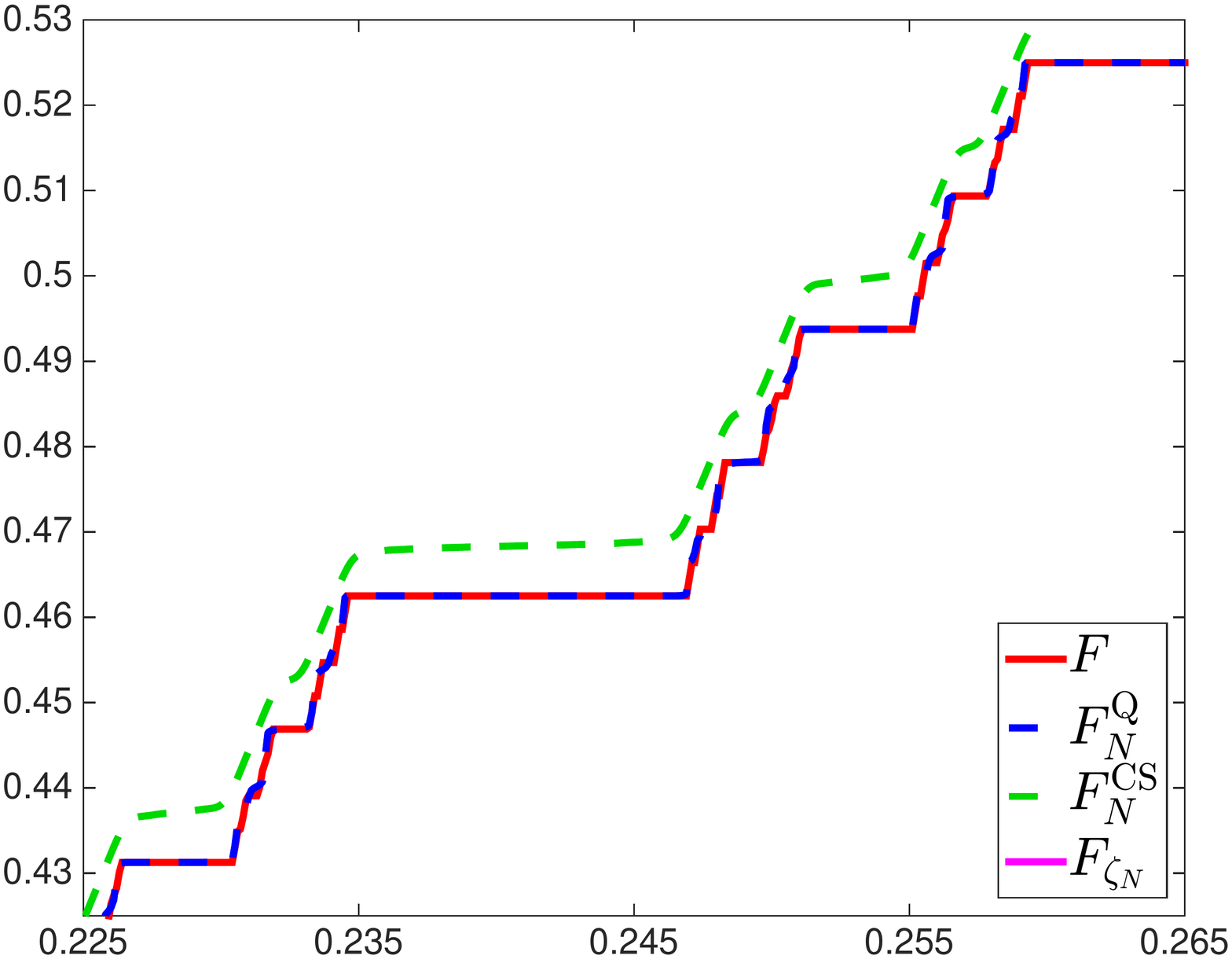}}

\put(130,0){\footnotesize $t$}
\put(350,0){\footnotesize $t$}
\put(170,145){\footnotesize $N=100$}
\put(384,145){\footnotesize $N=1000$}

\put(91, 90){\vector(0, -1){48}}
\put(306, 90){\vector(0, -1){48}}


\end{picture}
\caption{\figCaptionSize Example~\ref{ex:artif_cdf} -- Approximation of the distribution function (Theorems~\ref{thm:cdf_conv_CS} and~\ref{thm:cdf_conv_Q})} 
\label{fig:artif_cdf}
\end{figure*}

\begin{figure*}[th] 
\begin{picture}(140,180)
\put(20,0){\includegraphics[width=80mm]{./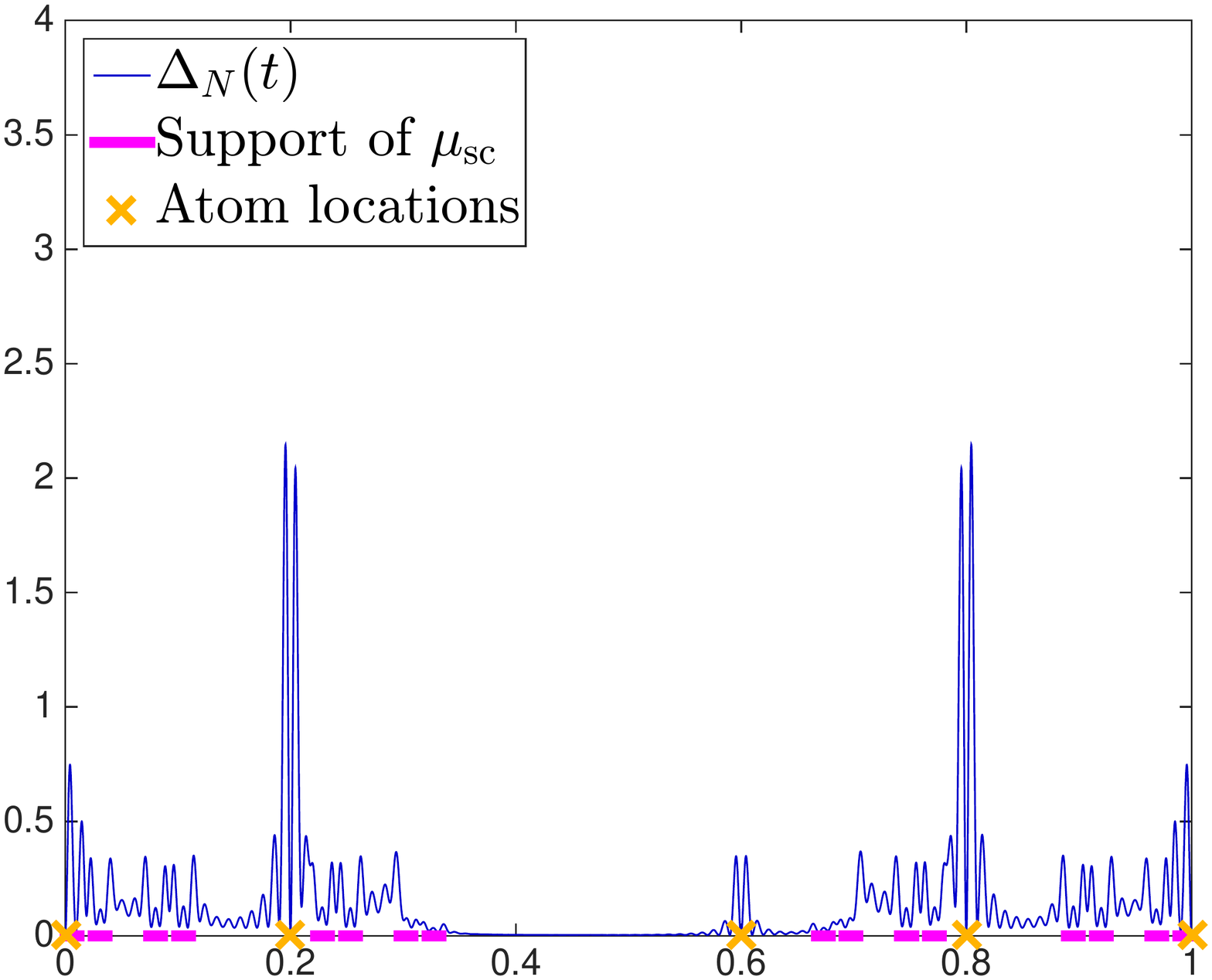}}
\put(235,0){\includegraphics[width=80mm]{./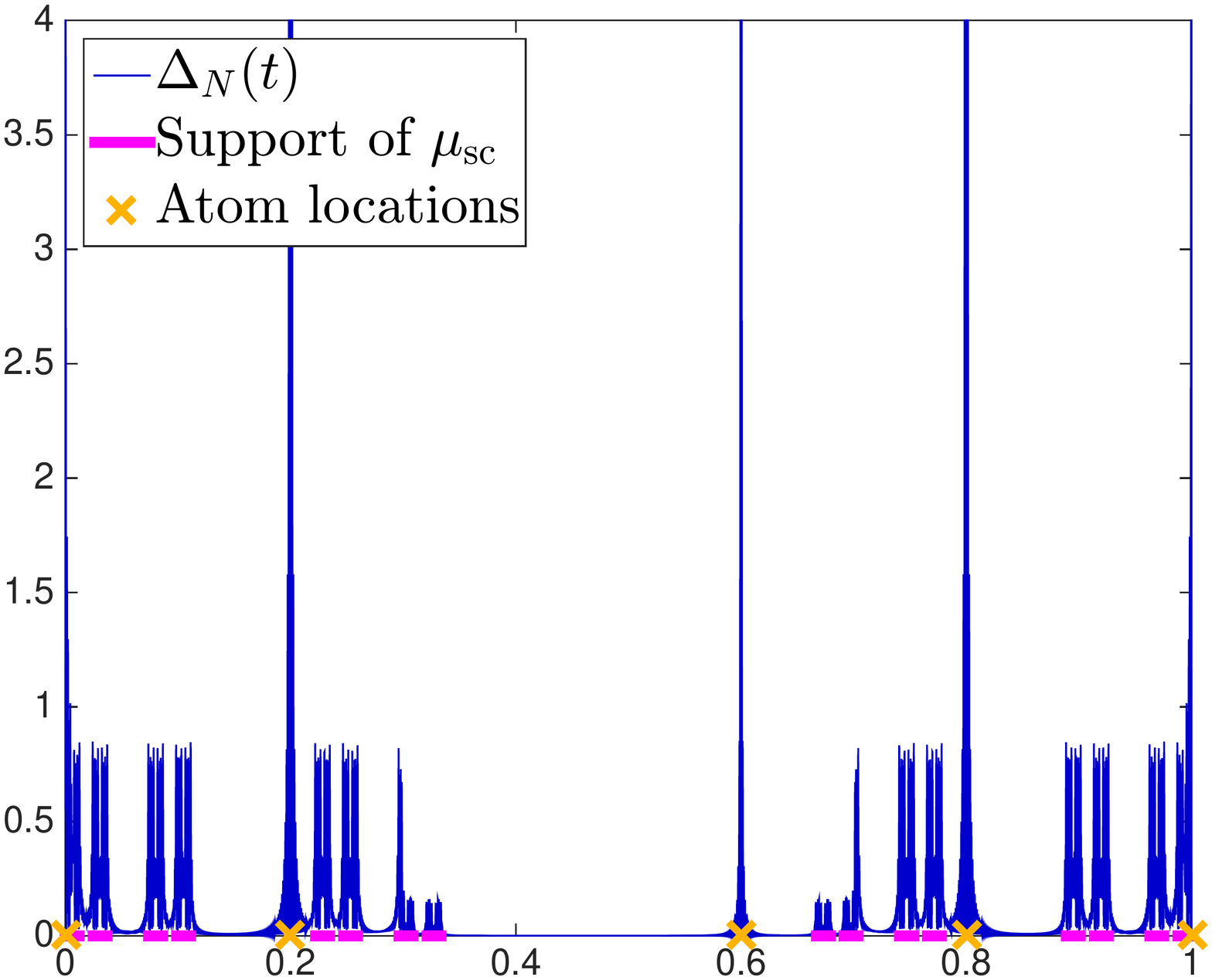}}

\put(130,0){\footnotesize $t$}
\put(350,0){\footnotesize $t$}
\put(170,145){\footnotesize $N=100$}
\put(384,145){\footnotesize $N=1000$}

\end{picture}
\caption{\figCaptionSize Example~\ref{ex:artif_cdf} -- Quantifying singularity of the measure. The singularity indicator~$\Delta_N$ is defined in~(\ref{eq:singul_indicator}).}
\label{fig:artif_singul_quat}
\end{figure*}

\end{example}

\cleardoublepage

\section{Approximation of the Koopman operator}
In this section we describe a method to construct an approximation of the Koopman operator $U$ from data. This can be done in a number of different ways, e.g., using the finite section method (also referred to as the finite central truncation). A numerical algorithm to carry out this approximation for the Koopman operator is called the Extended Dynamic Mode Decomposition (EDMD)~\cite{williams2015data} (see Section~\ref{sec:dmd_relation} for a relation of a particular version of EDMD to the CD kernel analysis developed in the previous sections).

Here, we take a slightly different path, constructing an approximation that explicitly takes into account the contributions of the atomic and continuous parts of the spectrum. The starting point is the representation of $U$ as the spectral integral
\[
U =\int_{[0,1]} e^{i2\pi\theta} dE(\theta)
\]
provided by the spectral theorem. The approximation $U_K$ of $U$ is then given by
\begin{equation}\label{eq:UK}
U_K = \sum_{j=1}^K e^{i2\pi\theta_j}P_{A_j}
\end{equation}
where $P_{A_j} := E(A_j)$ is the spectral projection on the set $A_j\subset [0,1]$ and $\theta_j \in A_j$. The sets $A_j$ are chosen such that they form a disjoint partition of $[0,1]$, i.e., $A_j \cap A_k = \emptyset$ if $j\ne k$ and $\cup_{j=1}^K A_j = [0,1]$. In what follows we discuss
\begin{enumerate}
\item Convergence of $U_K$ to $U$ as $K$ tends to infinity.
\item Computation of the spectral projections $P_{A_j}$ from data.
\item Choice of the partition $(A_j)_{j=1}^K$.
\end{enumerate}

The results of this section can be seen as a generalization of the results of~\cite{MezicandBanaszuk:2004, mezic:2005} that considered the case of $A_j$ being a singleton, i.e., $A_j=\{\theta_j\}$, in which case $P_{A_j}$ is the projection on the eigenspace associated to $\theta_j$ provided that $\theta_j$ is an eigenvalue of $U$; otherwise $P_{A_j} = 0$. Here we treat the fully general case of projections on eigenspaces as well as subsets of the continuous spectrum.

\subsection{Convergence of $U_K$ to $U$}
Given a set $A\subset [0,1]$ we define its diameter as $ \mr{diam}(A) = \sup(A) - \inf(A)$ and we note that $\mr{diam}(A) \in [0,1]$. The following result shows that if the diameter of the sets comprising the disjoint partition in~(\ref{eq:UK}) tends to zero, then $U_K$ converges to $U$ in the strong operator topology.

\begin{theorem}
Let $(A_{j,K})_{j=1}^K$ be a sequence of disjoint partitions of $[0,1]$ (i.e.,  $A_{j,K}\cap A_{l,K} = \emptyset$ if $j\ne l$ and $\cup_{j=1}^K A_{j,K} = [0,1]$) satisfying 
\begin{equation}\label{eq:diamConv}
\lim_{K\to\infty}\max_{j=1,\ldots,K} \mr{diam}(A_{j,K}) = 0
\end{equation}
and let $\theta_{j,K} \in A_{j,K}$. Then the operators $U_K$ defined by~(\ref{eq:UK}) with $A_j$ and $\theta_j$ replaced by $A_{j,K}$ and $\theta_{j,K}$ convergence to $U$ in the strong operator topology, i.e.,
\begin{equation}
\lim_{K\to\infty} \| U_K f - Uf\|_{L_2(\nu)} = 0
\end{equation}
for all $f \in L_2(\nu)$.

\end{theorem}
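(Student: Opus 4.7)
The plan is to realize the approximation $U_K$ as a spectral integral against a simple function $g_K$ that approximates the identity map $\theta\mapsto e^{i2\pi\theta}$ on $[0,1]$, and then invoke the functional calculus together with a straightforward uniform estimate.

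First I would define the step function
\[
g_K(\theta) \;:=\; \sum_{j=1}^K e^{i2\pi \theta_{j,K}} \, I_{A_{j,K}}(\theta),
\]
so that by the spectral theorem and the fact that $(A_{j,K})_{j=1}^K$ is a disjoint partition of $[0,1]$, one has
\[
U_K \;=\; \sum_{j=1}^K e^{i2\pi \theta_{j,K}} E(A_{j,K}) \;=\; \int_{[0,1]} g_K(\theta)\, dE(\theta).
\]
Together with the spectral resolution $U = \int_{[0,1]} e^{i2\pi\theta}\, dE(\theta)$ from~(\ref{eq:specExp}) (rewritten on $[0,1]$), this gives
\[
U_K - U \;=\; \int_{[0,1]} \bigl(g_K(\theta) - e^{i2\pi\theta}\bigr)\, dE(\theta).
\]

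Next I would establish that $g_K \to e^{i2\pi\theta}$ \emph{uniformly} on $[0,1]$. For any $\theta\in A_{j,K}$, the Lipschitz bound $|e^{i2\pi\alpha}-e^{i2\pi\beta}| \le 2\pi|\alpha-\beta|$ yields
\[
|g_K(\theta) - e^{i2\pi\theta}| \;=\; |e^{i2\pi\theta_{j,K}} - e^{i2\pi\theta}| \;\le\; 2\pi\,|\theta_{j,K}-\theta| \;\le\; 2\pi\, \mr{diam}(A_{j,K}),
\]
hence $\|g_K - e^{i2\pi(\cdot)}\|_\infty \le 2\pi \max_{j} \mr{diam}(A_{j,K}) \to 0$ by hypothesis~(\ref{eq:diamConv}).

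Finally, I would apply the functional-calculus isometry to convert this into the desired convergence. For any $f\in L_2(\nu)$, the scalar spectral measure $\mu_f$ defined in~(\ref{eq:muf_def}) is a finite positive measure on $[0,1]$ with total mass $\|f\|^2$, and
\[
\|U_K f - Uf\|^2_{L_2(\nu)} \;=\; \int_{[0,1]} |g_K(\theta) - e^{i2\pi\theta}|^2 \, d\mu_f(\theta) \;\le\; \|g_K - e^{i2\pi(\cdot)}\|_\infty^2 \, \|f\|^2,
\]
which tends to $0$ as $K\to\infty$. This actually gives operator-norm convergence, which is stronger than strong operator topology convergence, so the theorem follows a fortiori.

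There is no real obstacle here: the only mild point to be careful about is the symmetry convention for the atom at $\theta=0=1$ (Footnote~\ref{foot:symmetry}), but since $e^{i2\pi\cdot 0}=e^{i2\pi\cdot 1}$ the two endpoints contribute consistently and the functional calculus identity above is unaffected.
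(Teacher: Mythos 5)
Your proof is correct and follows the same basic strategy as the paper's: realize $U_K$ as $\int_{[0,1]} g_K\, dE$ with $g_K(\theta) = \sum_j e^{i2\pi\theta_{j,K}} I_{A_{j,K}}(\theta)$ and estimate $|g_K(\theta)-e^{i2\pi\theta}|$ via the Lipschitz bound $2\pi\,\mathrm{diam}(A_{j,K})$. The difference is in the final step. The paper only records the bound as a \emph{pointwise} limit $g_K(\theta)\to e^{i2\pi\theta}$, checks the uniform domination $|g_K|\le 1$, and then appeals to a dominated-convergence theorem for spectral integrals to conclude strong-operator convergence. You instead observe what is already implicit in that very estimate: the bound $2\pi\max_j\mathrm{diam}(A_{j,K})$ is \emph{independent of $\theta$}, so $g_K\to e^{i2\pi(\cdot)}$ uniformly, and then $\|U_K-U\|\le \|g_K-e^{i2\pi(\cdot)}\|_\infty\to 0$ by the elementary functional-calculus norm inequality (equivalently, via $\|h(U)f\|^2=\int|h|^2\,d\mu_f\le\|h\|_\infty^2\|f\|^2$). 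This yields \emph{operator-norm} convergence, strictly stronger than the strong-operator convergence stated in the theorem, and dispenses with the dominated-convergence machinery. Both arguments are valid; yours is simultaneously more elementary and gives a sharper conclusion, since the hypothesis~(\ref{eq:diamConv}) is a uniform-mesh condition and one should expect a uniform estimate to come out of it. Your remark about the endpoint identification $\theta=0\sim\theta=1$ is also correctly handled: since $e^{i2\pi\cdot 0}=e^{i2\pi\cdot 1}$, the target function descends to the circle and the estimate is unaffected by the convention of Footnote~\ref{foot:symmetry}.
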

\begin{proof}
Writing $P_{A_{j,K}} = \int_{[0,1]} I_{A_{j,K}}(\theta)\,dE(\theta)$ with  $I_{A_{j,K}}$ being the indicator function of the set $A_{j,K}$, we get
\[
U_K = \sum_{j=1}^K e^{i2\pi\theta_j}P_{A_{j,K}} = \int_{[0,1]}\sum_{j=1}^K e^{i2\pi\theta_{j,K}} I_{A_{j,K}}(\theta)\;dE(\theta) = \int_{[0,1]} g_K(\theta)\, dE(\theta),
\]
where
\[
g_K(\theta) = \sum_{j=1}^K e^{i2\pi\theta_{j,K}} I_{A_{j,K}}(\theta).
\]
 Now we observe that $g_K(\theta)$ converges pointwise to the function $e^{i2\pi\theta}$. To see this, fix $\theta \in [0,1]$ and $\epsilon > 0$. By~(\ref{eq:diamConv}) there exists a $K\in\Nb$ such that $\max_{j=1,\ldots,K} \mr{diam}(A_{j,K}) < \epsilon$. Since the partition is disjoint there exists  one and only one $A_{j,K}$ such that $\theta \in A_{j,K}$. Therefore
\[
|g_K(\theta) - e^{i2\pi\theta}| = | I_{A_{j,K}}(\theta)e^{i2\pi\theta_{j,K}} - e^{i2\pi\theta}| = | e^{i2\pi\theta_{j,K}} - e^{i2\pi\theta}| \le 2\pi |\theta - \theta_{j,K}| \le 2\pi\epsilon.
\]
Hence indeed $\lim_{K\to\infty} g_K(\theta) = e^{i2\pi\theta}$. The proof of the theorem is finished by observing that $|g_K(\theta)| \le 1$ (since $A_{K,j}$ are disjoint) and invoking \cite[Corollary 3.27]{kowalski2009spectral}.
\end{proof}

\subsection{Computation of spectral projections $P_{A_{j,K}}$ from data}\label{sec:projComp}
In this section we show how the spectral projections $P_{A_{j,K}}f$ of a given observable $f$ can be computed from data in the form of samples of $f$ on a single trajectory of the dynamical system~(\ref{eq:sys}). Throughout this section we assume that the set $A_{j,K} \subset [0,1]$ is given and we drop the subscripts. The goal is therefore to compute $P_A f$ from data, with $A\subset [0,1]$. The idea is to approximate the indicator function $I_A$ of the set $A$ using trigonometric polynomials and apply the spectral theorem. Indeed, if the trigonometric polynomials
$
p_N(\theta) = \sum_{k=-N}^N \alpha_{k,N} e^{i2\pi\theta k},
$
 $\alpha_{k,N}\in\Cb$, satisfy 
\[
\lim_{N\to\infty} p_N(\theta) = I_A(\theta),
\]
then
\begin{align}\label{eq:projLimit}\nonumber
P_A &= \int_{[0,1]} I_A(\theta)\,dE(\theta) = \int_{[0,1]} \lim_{N\to\infty} p_N(\theta)\,dE(\theta) =  \lim_{N\to\infty} \int_{[0,1]} p_N(\theta)\,dE(\theta)  \\ &=  \lim_{N\to\infty}\sum_{k=-N}^N \alpha_{k,N}\int_{[0,1]} e^{i2\pi\theta k} \, dE(\theta) = \lim_{N\to\infty}\sum_{k=-N}^N \alpha_{k,N}U^k,
\end{align}
where the first limit is understood pointwise and the remaining limits in the sense of convergence in the strong operator topology. The exchange of limit and integration is justified by~\cite[Corollary 3.27]{kowalski2009spectral}. The fact that $U^k = \int_{[0,1]} e^{i2\pi\theta k} \, dE(\theta)$ is a direct consequence of the spectral theorem. The following theorem summarizes these developments.
\begin{theorem}\label{thm:projAbstract}
Let $\alpha_{k,N}$ be such that $\lim_{N\to\infty} \sum_{k=-N}^N \alpha_{k,N} e^{i2\pi\theta k} = I_A(\theta)$ for all $\theta \in [0,1]$. Then
\begin{equation}\label{eq:L2projConv}
\lim_{N\to\infty} \Big\| P_A g -  \sum_{k=-N}^N \alpha_{k,N} U^k g \Big\|_{L_2(\nu)} = 0
\end{equation}
for all $g \in L_2(\nu)$.
\end{theorem}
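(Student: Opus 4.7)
The plan is to reduce the operator-topology claim (\ref{eq:L2projConv}) to a scalar dominated-convergence statement on the $[0,1]$ side of the spectral theorem, essentially formalizing the informal computation (\ref{eq:projLimit}). Setting $p_N(\theta) := \sum_{k=-N}^N \alpha_{k,N}\, e^{i2\pi\theta k}$ and using $U^k = \int_{[0,1]} e^{i2\pi\theta k}\, dE(\theta)$ together with linearity of the functional calculus, the partial sum rewrites as a single spectral integral $\sum_{k=-N}^N \alpha_{k,N}\, U^k = \int_{[0,1]} p_N(\theta)\, dE(\theta)$, while $P_A = \int_{[0,1]} I_A(\theta)\, dE(\theta)$ by definition of the spectral projection on $A$. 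Hence the object to be controlled is $\bigl\| \int_{[0,1]} (I_A - p_N)(\theta)\, dE(\theta)\, g \bigr\|_{L_2(\nu)}$.

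The key step is the standard isometry attached to a projection-valued measure: for any bounded Borel function $h$ on $[0,1]$ and any $g \in L_2(\nu)$,
$\bigl\| \int_{[0,1]} h\, dE \cdot g \bigr\|_{L_2(\nu)}^2 = \int_{[0,1]} |h(\theta)|^2\, d\mu_g(\theta)$,
where $\mu_g(B) := \langle E(B)g, g\rangle$ is the finite positive scalar spectral measure attached to $g$ (the same object used to define $\mu_f$ earlier in the paper). Applied with $h = I_A - p_N$, this reduces (\ref{eq:L2projConv}) to verifying the scalar statement $\int_{[0,1]} |I_A - p_N|^2\, d\mu_g \to 0$ as $N\to\infty$.

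The main (really only) obstacle, and precisely the reason the informal computation (\ref{eq:projLimit}) invokes Corollary~3.27 of \cite{kowalski2009spectral}, is supplying a $\mu_g$-integrable dominator so that the pointwise convergence $p_N(\theta) \to I_A(\theta)$ furnished by the hypothesis can be upgraded, via dominated convergence, to an $L^2(\mu_g)$ statement. A uniform bound $\|p_N\|_\infty \le C$ on $[0,1]$ clearly suffices, and is automatic in the applications of interest here: for the Ces\`aro/Fej\'er-type constructions of $p_N$ used later one has $\|p_N\|_\infty \le \|I_A\|_\infty = 1$. With any such dominator in hand, dominated convergence on the finite measure $\mu_g$ gives $\int |I_A - p_N|^2\, d\mu_g \to 0$, and the isometry above transports this back to the $L_2(\nu)$ norm, yielding (\ref{eq:L2projConv}). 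If one wishes to state the theorem without an implicit boundedness assumption, the cleanest remedy is to add the hypothesis $\sup_N \|p_N\|_\infty < \infty$, which is automatically satisfied by all constructions of $\alpha_{k,N}$ used in the paper.
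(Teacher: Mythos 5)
Your proof takes essentially the same route as the paper's: rewrite $\sum_{k=-N}^N \alpha_{k,N}U^k$ as the spectral integral of $p_N=\sum_k \alpha_{k,N}e^{i2\pi\theta k}$ and pass to the limit in the strong operator topology via a dominated‐convergence result for projection‐valued measures; where the paper cites~\cite[Corollary~3.27]{kowalski2009spectral} directly, you unfold the same fact through the isometry $\bigl\|\int h\,dE\,g\bigr\|_{L_2(\nu)}^2=\int|h|^2\,d\mu_g$, which is merely a more hands-on phrasing of the identical argument. You also correctly observe that as literally stated the theorem's hypothesis (pointwise convergence of $p_N$ to $I_A$ alone) is insufficient for the dominated-convergence step; a uniform bound $\sup_N\|p_N\|_\infty<\infty$ is needed, is implicit in the cited corollary, and is indeed satisfied by every choice of $\alpha_{k,N}$ actually used later (the Dirichlet averages in~(\ref{eq:projEigenvalCoef}) give $\|p_N\|_\infty\le 1$, and the Fej\'er/Ces\`aro construction in~(\ref{eq:projInterval})--(\ref{eq:cesaro_coefs_beta}) gives $\|p_N\|_\infty\le 2$). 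So the argument is sound and coincides with the paper's; you have simply made explicit a hypothesis the paper leaves tacit.
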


In what follows we discuss how to choose the coefficients  $\alpha_{k,N}$ such that $\sum_{k=-N}^N \alpha_{k,N} e^{i2\pi\theta k} \to I_A(\theta)$ for all $\theta \in [0,1]$ and how to approximate the sum in~(\ref{eq:L2projConv}) from data. We start with the latter.

\subsubsection{Numerical computation of spectral projections $P_{A}f$}\label{eq:specProjNum}
According to Theorem~\ref{thm:projAbstract}, given coefficients $\alpha_{k,N}$ such that $\sum_{k=-N}^N \alpha_{k,N} e^{i2\pi\theta k} \to I_A(\theta)$ for all $\theta \in [0,1]$, we can approximate the projection $P_A f$ by 
\[
P_A f \approx P_{A,N}f : = \sum_{k=-N}^N \alpha_{k,N}U^k f = \sum_{k=-N}^N \alpha_{k,N} \cdot f \circ T^k
\]
with $P_{A,N}f$ converging to $P_A$ in the $L_2(\nu)$ norm. Given data in the form of samples of $f$
\begin{equation}\label{eq:data_yj}
y_j = f(x_j),\quad j = 1,\ldots M
\end{equation}
 evaluated on a single trajectory of the dynamical system~(\ref{eq:sys}), i.e., $x_{j+1} = T(x_j)$, we can evaluate the approximate projection $P_{A,N}f$ along the points on this trajectory. Indeed, provided that $2N < M$, we can evaluate  $(P_{A,N}f)(x_j)$ for $j \in \{N,\ldots M-N\}$ using
\begin{equation}\label{eq:projApproxSample}
(P_{A,N}f)(x_j) = \sum_{k=-N}^N \alpha_{k,N} f(x_{j+k}) = \sum_{k=-N}^N \alpha_{k,N} \,y_{j+k}.
\end{equation}

Equation~(\ref{eq:projApproxSample}) is readily implementable given the data~(\ref{eq:data_yj}) and the sequence of complex numbers $\alpha_{k,N}$. Two natural questions arise about this numerical scheme.

\begin{remark}[Density of a single trajectory]
The first question asks whether a single trajectory is sufficient to represent $P_{A,N} f$. Provided that the state-space $X$ is topological, the sigma algebra $\sigalg$ is the Borel sigma algebra and the measure $\nu$ is ergodic with the property that $\nu(G) > 0$ for every open set $G\subset X$, then for $\nu$ almost all initial conditions the trajectory of the dynamical system~(\ref{eq:sys}) will be dense in $X$. Therefore, under these conditions, we can evaluate the approximate projections $P_{A,N}$ on a dense set of points from a single trajectory of~(\ref{eq:sys}). If, on the other hand, the measure $\nu$ is not ergodic, then the set of points on which $P_{A,N}$ is evaluated will be confined to a single component of the ergodic partition of $X$.
\end{remark}

\begin{remark}[Pointwise convergence]\label{rem:pointWiseConv}
 Theorem~\ref{thm:projAbstract} guarantees $L_2(\nu)$ convergence of $P_{A,N} f$ to $P_A f$. The question of whether $\nu$-almost everywhere convergence holds is more subtle and may depend on the choice of coefficients $\alpha_{k,N}$. In Theorem~\ref{thm:projPointWiseConv} we prove that this convergence holds for the coefficients proposed in Section~\ref{sec:coefChoice}.
\end{remark}

\subsubsection{Choice of the coefficients $\alpha_{k,N}$}\label{sec:coefChoice}
\label{coef}
Now we show how to select the parameters $\alpha_{k,N} \in \Cb$ such that $\lim_{N\to\infty} \sum_{k=-N}^N \alpha_{k,N} e^{i2\pi\theta k} = I_A(\theta)$ for all $\theta \in [0,1]$. We restrict our attention to $A$ being either a singleton $A = \{\theta_0\}$ or an interval $A_j = [a,b)$. We note that  for neither of these sets the choice of the coefficients $\alpha_{k,N}$ is unique.

\paragraph{Singleton} For the singleton $\{\theta_0\}$, one possible choice is
\begin{equation}\label{eq:projEigenvalCoef}
\alpha_{k,N}^{\{\theta_0\}} = \begin{cases}\frac{1}{N+1} e^{-i2\pi  k \theta_0} & k \in \{0,\ldots, N\}\\
0 & \mr{otherwise},
\end{cases}
\end{equation}
which was proposed in~\cite{mezic:2005}. Another possible choice is the double-sided version of~(\ref{eq:projEigenvalCoef}) which is non-zero for $k\in\{-N,\ldots,N\}$ and has the $1/(2N+1)$ coefficient in front.

\paragraph{Interval} For the interval $[a,b)$, one possible choice is
\begin{equation}\label{eq:projInterval}
\alpha_{k,N}^{[a,b)} = \frac{1}{2} \alpha_{k,N}^{\{a\}} + \beta_{k,N}^{[a,b)} - \frac{1}{2} \alpha_{k,N}^{\{b\}},
\end{equation}
where
\begin{equation}\label{eq:cesaro_coefs_beta}
\beta_{k,N}^{[a,b)} =\begin{cases}  \frac{N-|k|}{N} \frac{i}{2\pi k}(e^{-i2\pi b k}  - e^{-i2\pi a k} ) & k \ne 0, \\ 
b-a & k = 0
\end{cases}
\end{equation}
 are the coefficients of the Ces\`aro sum of the degree-$N$  Fourier series approximation to the indicator function of $[a,b)$ (i.e., the coefficients of the convolution of the Fej\'er kernel~(\ref{eq:fejerKernel}) with the indicator function of $[a,b)$). These coefficients have the advantage of the indicator function approximation being nonnegative and less oscillatory than Fourier series approximation. The coefficients~$\beta_{k,N}^{[a,b)}$ satisfy
\[
\lim_{N\to\infty} \sum_{k=-N}^N \beta_{k,N}^{[a,b)} e^{i2\pi\theta k} = \frac{1}{2}I_{\{a\}}(\theta) + I_{(a,b)}(\theta) + \frac{1}{2}I_{\{b\}}(\theta)
\]
and hence the need for the corrective terms $\frac{1}{2} \alpha_{k,N}^{\{a\}}$ and $-\frac{1}{2} \alpha_{k,N}^{\{b\}}$ in~(\ref{eq:projInterval}) so that \[\lim_{N\to\infty} \sum_{k=-N}^N \alpha_{k,N}^{[a,b)} e^{i2\pi\theta k} =  I_{[a,b)}(\theta). \]

We remark that, any finite union of intervals and singletons can be obtained by combining~(\ref{eq:projEigenvalCoef}) and (\ref{eq:projInterval}).

The following theorem establishes $\nu$-almost everywhere convergence of the spectral projection approximations of Section~\ref{eq:specProjNum} with the choice of coefficients proposed in this section (see Remark~\ref{rem:pointWiseConv}).

\begin{theorem}\label{thm:projPointWiseConv}
Let $\alpha_{k,N}^{\{\theta_0\}}$ and $\alpha_{k,N}^{[a,b)}$ be given by~(\ref{eq:projEigenvalCoef}), respectively (\ref{eq:projInterval}) and let $g \in L_2(\nu)$ be given. Then for $\nu$-almost all $x \in X$
\begin{equation}\label{eq:pointWiseConvSingleton}
\lim_{N\to\infty}\sum_{k=-N}^N \alpha_{k,N}^{\{\theta_0\}} g(T^k(x)) = (P_{\{\theta_0\}}g)(x)
\end{equation}
and
\begin{equation}\label{eq:pointWiseConvInterval}
\lim_{N\to\infty}\sum_{k=-N}^N \alpha_{k,N}^{[a,b)} g(T^k(x)) = (P_{[a,b)}g)(x)
\end{equation}
\end{theorem}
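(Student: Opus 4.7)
My approach is to treat the singleton case by a direct application of the Wiener--Wintner ergodic theorem, and then reduce the interval case to the singleton case plus the analysis of the ``middle'' Fej\'er-convolution piece $\beta_{k,N}^{[a,b)}$. I expect the singleton case and the endpoint-reduction to be routine; the main work will be controlling the middle piece along individual trajectories.

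\textbf{Singleton case.} The sum in~(\ref{eq:pointWiseConvSingleton}) is exactly the (one-sided) modulated Birkhoff average
\[
B_N^{\theta_0}g(x) := \frac{1}{N+1}\sum_{k=0}^N e^{-i2\pi k\theta_0} g(T^k(x)).
\]
Orthogonally decompose $g=h+g^\perp$ in $L_2(\nu)$ with $h:=P_{\{\theta_0\}}g$. Since $Uh=e^{i2\pi\theta_0}h$, one has $h\circ T^k=e^{i2\pi k\theta_0}h$ pointwise, so $B_N^{\theta_0}h(x)=h(x)$ identically for every $N$ and every $x$. The residual $g^\perp$ is orthogonal to the eigenspace of $U$ at $e^{i2\pi\theta_0}$, so the classical Wiener--Wintner theorem (applied with fixed frequency $\theta_0$) yields $B_N^{\theta_0}g^\perp(x)\to 0$ for $\nu$-a.e.~$x$. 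Summing the two pieces gives~(\ref{eq:pointWiseConvSingleton}).

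\textbf{Interval case, reduction.} Using the decomposition~(\ref{eq:projInterval}),
\[
\sum_{k=-N}^N\alpha_{k,N}^{[a,b)}g(T^k(x)) \;=\; \tfrac{1}{2}\!\!\sum_{k=-N}^N\!\alpha_{k,N}^{\{a\}}g(T^k(x)) \;+\; S_N(x) \;-\; \tfrac{1}{2}\!\!\sum_{k=-N}^N\!\alpha_{k,N}^{\{b\}}g(T^k(x)),
\]
with $S_N(x):=\sum_{k=-N}^N\beta_{k,N}^{[a,b)}g(T^k(x))$. By the singleton step the first and third sums converge $\nu$-a.e.\ to $\tfrac{1}{2}(P_{\{a\}}g)(x)$ and $-\tfrac{1}{2}(P_{\{b\}}g)(x)$. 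It therefore suffices to prove $S_N(x)\to\big(\tfrac{1}{2}P_{\{a\}}+P_{(a,b)}+\tfrac{1}{2}P_{\{b\}}\big)g(x)$ for $\nu$-a.e.\ $x$, since the endpoint corrections then assemble to $P_{\{a\}}g+P_{(a,b)}g=P_{[a,b)}g$.

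\textbf{Middle piece.} Split $g=g_p+g_c$ where $g_p$ lies in the closed span of eigenfunctions of $U$ and $g_c$ in its orthogonal complement. For $g_p=\sum_j c_j\phi_j$ with $U\phi_j=e^{i2\pi\theta_j}\phi_j$,
\[
\sum_{k=-N}^N\beta_{k,N}^{[a,b)}\phi_j(T^k(x)) \;=\; \phi_j(x)\cdot\big(F_{N+1}\ast I_{[a,b)}\big)(\theta_j),
\]
where $F_{N+1}$ is the Fej\'er kernel of~(\ref{eq:fejerKernel}); Fej\'er's theorem gives pointwise convergence of the scalar to $\tfrac{1}{2}I_{\{a\}}(\theta_j)+I_{(a,b)}(\theta_j)+\tfrac{1}{2}I_{\{b\}}(\theta_j)$ and the uniform bound $\|F_{N+1}\ast I_{[a,b)}\|_\infty\le 1$ combined with $\sum_j|c_j|^2<\infty$ lets one pass to the limit in the eigenfunction expansion, proving the claim on the eigen part. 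For $g_c$, the spectral measure $\mu_{g_c}$ is non-atomic, hence $\mu_{g_c}(\{a,b\})=0$ and
\[
\big\|S_N-P_{(a,b)}g_c\big\|_{L_2(\nu)}^2 \;=\; \int_{[0,1]}\big|(F_{N+1}\ast I_{[a,b)})(\theta)-I_{[a,b)}(\theta)\big|^2 d\mu_{g_c}(\theta)\;\longrightarrow\;0
\]
by dominated convergence on the spectral side.

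\textbf{Main obstacle.} The delicate step is upgrading this last $L_2$-convergence of $S_Ng_c$ to $\nu$-a.e.\ pointwise convergence along a single trajectory. Writing $S_N(x)=\int_a^b C_N(\theta,x)\,d\theta$ with $C_N(\theta,x):=\sum_{k=-N}^N(1-|k|/N)e^{-i2\pi k\theta}g(T^k(x))$ by Fubini, one cannot simply bound $|S_N|$ by a telescoping Birkhoff average since $|\beta_{k,N}^{[a,b)}|\sim 1/|k|$ only yields logarithmic control. Instead one must invoke a maximal inequality of Wiener--Wintner type (e.g.\ Assani, Lesigne) to dominate $\{S_N\}$ by an $L_2(\nu)$-function and then combine with pointwise convergence on the dense subspace spanned by eigenfunctions (settled above) to conclude a.e.\ convergence on all of $L_2(\nu)$ via the Banach principle. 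Once this maximal control is in place, the remaining pieces---Fubini interchange, the Fej\'er kernel identity, and the endpoint reduction---are routine.
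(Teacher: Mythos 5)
Your singleton case and your reduction of the interval to an endpoint part plus a Fej\'er-smoothed ``middle'' piece $S_N$ both mirror the paper's proof. The gap lies entirely in the middle piece: you correctly identify that the crux is upgrading the $L_2$ convergence of $S_N$ to $\nu$-a.e.\ convergence, but you do not actually prove it. You assert that ``one must invoke a maximal inequality of Wiener--Wintner type (e.g.\ Assani, Lesigne)'' and then pass to a dense subspace via the Banach principle; since $|\beta_{k,N}^{[a,b)}|\sim 1/|k|$, the relevant maximal object is the modulated ergodic Hilbert transform, and producing an $L_2$ maximal bound for it that is uniform in the modulation parameter is precisely the hard analytic content --- it is not something that follows from the classical Wiener--Wintner theorem, so citing it by name does not close the argument.

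The paper bypasses the maximal-plus-dense-subspace route. After the same endpoint reduction, it observes that for $k\ne 0$ the coefficient $\beta_{k,N}^{[a,b)}$ is a difference of two terms of the form $\frac{N-|k|}{N}\cdot\frac{e^{-i2\pi ck}}{k}$ with $c\in\{a,b\}$, and that
\[
\sum_{0<|k|\le N}\frac{N-|k|}{N}\,\frac{e^{i2\pi\theta k}}{k}\,g(T^k(x))
\;=\;
\frac{1}{N}\sum_{n=0}^{N-1}\sum_{0<|k|\le n}\frac{e^{i2\pi\theta k}}{k}\,g(T^k(x)),
\]
i.e.\ the Fej\'er-weighted sum is exactly the Ces\`aro mean of the ergodic Hilbert transform partial sums. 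It then invokes the Lacey--Terwilleger uniform Wiener--Wintner theorem for the ergodic Hilbert transform, which gives $\nu$-a.e.\ existence of $\lim_N\sum_{0<|k|\le N}\frac{e^{i2\pi\theta k}}{k}g(T^k(x))$ simultaneously for all $\theta$. Ces\`aro summability of a convergent sequence then immediately yields existence of the limit of $S_N(x)$ for a.e.\ $x$, and its identification with $P_{[a,b)}g$ follows from the $L_2$ convergence already established in Theorem~\ref{thm:projAbstract}. So the missing ingredient in your plan is precisely this named theorem, and with it the maximal-inequality scaffolding becomes unnecessary.

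There is also a secondary gap in your treatment of the point-spectrum part $g_p=\sum_j c_j\phi_j$: you interchange the sum over $j$ with the finite $k$-sum to obtain $\sum_j c_j\phi_j(x)(F_{N+1}*I_{[a,b)})(\theta_j)$, and pass to the limit ``since $\|F_{N+1}*I_{[a,b)}\|_\infty\le 1$ and $\sum_j|c_j|^2<\infty$.'' But $\ell^2$ coefficients plus a bounded multiplier give $L_2(\nu)$ convergence of that series, not $\nu$-a.e.\ absolute summability of $\sum_j c_j\phi_j(x)$, so the pointwise-in-$x$ manipulation is not justified as stated; a.e.\ convergence of orthogonal expansions requires more than square-summable coefficients. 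In the paper's argument this issue never arises, because the identification of the limit is done once and for all via the $L_2$ convergence of Theorem~\ref{thm:projAbstract}, and the spectral decomposition $g=g_p+g_c$ is not used.
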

\begin{proof}
The relation~(\ref{eq:pointWiseConvSingleton}) follows immediately from Theorem~\ref{thm:projAbstract} and the Wiener-Wintner ergodic theorem~\cite{wiener1941harmonic}. In order to prove~(\ref{eq:pointWiseConvInterval}), we first remark that it suffices to prove that the limit 
\begin{equation}\label{eq:betaNProj}
\lim_{N\to\infty}\sum_{k=-N}^N \beta_{k,N}^{[a,b)} g(T^k(x))
\end{equation}
exists for $\nu$-almost all $x \in X$; 
this follows immediately from Theorem~\ref{thm:projAbstract} and from the fact that the coefficients $\alpha_{k,N}^{[a,b)}$ are of the form~(\ref{eq:projInterval}) with $\nu$-almost everywhere convergence of the projections onto the singletons $\{a\}$ and $\{b\}$ guaranteed by~(\ref{eq:pointWiseConvSingleton}).

In order to prove the existence of the limit~(\ref{eq:betaNProj}) we use the recent generalization of the Wiener-Wintner theorem \cite[Corollary 7.2]{lacey2008hilbertTransform} establishing the existence of the limit
\begin{equation}\label{eq:HilbertTransConv}
\lim_{N\to\infty}\sum_{0< |k| \le  N} \frac{e^{i2\pi\theta k}}{k}g(T^k(x))
\end{equation}
for $\nu$-almost all $x \in X$ and all $\theta \in [0,1]$. In order to apply this result we observe that for $k\ne 0$ we have 
\[
\beta_{k,N} =  \frac{i}{2\pi } \Big( \frac{N-|k|}{N}   \frac{e^{-i2\pi b k}}{k}  - \frac{N-|k|}{N}\frac{e^{-i2\pi a k} }{k}\Big)
\]
with $a,b\in[0,1]$. Therefore it suffices to prove the $\nu$-almost everywhere  existence of
\[
\lim_{N\to\infty}\sum_{0< |k| \le  N} \frac{N- |k|}{N}\frac{e^{i2\pi\theta k}}{k}g(T^k(x))
\]
for $\theta \in [0,1]$. We have
\begin{align*}
\lim_{N\to\infty}\sum_{0< |k| \le  N} \frac{N- |k|}{N}\frac{e^{i2\pi\theta k}}{k}g(T^k(x)) & = \lim_{N\to\infty} \frac{1}{N}\sum_{n=0}^{N-1}\sum_{0< |k| \le  n} \frac{e^{i2\pi\theta k}}{k}g(T^k(x)),
\end{align*}
which is nothing but the Ces\` aro sum of~(\ref{eq:HilbertTransConv}) and hence it exists (and is equal to~(\ref{eq:HilbertTransConv})).
\end{proof}

\begin{remark}We remark that the null-set at which the limits in (\ref{eq:pointWiseConvSingleton}) and (\ref{eq:pointWiseConvInterval})  do not exist
 can be chosen independent of $\theta_0$ and $a$ and $b$.
 \end{remark}

\subsection{Choice of the partition $(A_j)_{j=1}^K$}
In this section we discuss how to choose the partition $(A_j)_{j=1}^K$ based on spectral properties of $U$. In order to obtain convergence in strong topology the only assumptions we need to satisfy is that of disjointness  of the partition $(A_j)_{j=1}^K$ and of the diameter of $A_j$  tending to zero (Theorem~\ref{thm:projAbstract}). Therefore, we get the following immediate corollary pertaining to a generic partition of $[0,1]$ to intervals:

\begin{corollary}\label{cor:UapproxConv}
Let $0 = a_{1,K} < a_{2,K} \ldots < a_{K,K} <  a_{K+1,K} =1$ satisfy \[
\lim_{K\to\infty} \max_{j\in\{1,\ldots,K\}} |a_{j+1,K}-a_{j,K}| = 0.\]
Then, for $\alpha_{k,N}^{[a_{j,K},a_{j+1,K})}$ defined by~(\ref{eq:projInterval}) and $\theta_{j,K} = (a_{j,K} + a_{j+1,K})/2$, it holds
\begin{equation}\label{eq:corolaryEquation}
\lim_{K\to\infty}\lim_{N\to\infty}  \Big\|  \sum_{j=1}^{K} e^{i2\pi\theta_{j,K}}\sum_{k=-N}^N \alpha_{k,N}^{[a_{j,K},a_{j+1,K})} U^k g - U g \Big\|_{L_2(\nu)} = 0
\end{equation}
for all $g \in L_2(\nu)$.
\end{corollary}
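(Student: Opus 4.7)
The plan is to chain together the two main ingredients developed earlier: Theorem~\ref{thm:projAbstract}, which handles the inner limit in $N$ by turning the trigonometric-polynomial approximation of an indicator into an $L_2(\nu)$-approximation of a spectral projection, and the earlier theorem on strong operator convergence of $U_K$ to $U$, which handles the outer limit in $K$.

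First I would fix $K$ and process the inner limit. Setting $A_{j,K} := [a_{j,K}, a_{j+1,K})$, the sets $\{A_{j,K}\}_{j=1}^{K}$ form a disjoint partition of $[0,1)$ (and, up to the single point $\{1\}$ which has no effect on the spectral integral, of $[0,1]$). The coefficients $\alpha_{k,N}^{[a_{j,K},a_{j+1,K})}$ were constructed in Section~\ref{sec:coefChoice} precisely so that
\[
\lim_{N\to\infty}\sum_{k=-N}^{N}\alpha_{k,N}^{[a_{j,K},a_{j+1,K})} e^{i2\pi\theta k} \;=\; I_{A_{j,K}}(\theta)
\]
for every $\theta\in[0,1]$. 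Hence Theorem~\ref{thm:projAbstract} yields, for each fixed $j$ and $K$,
\[
\lim_{N\to\infty}\Big\|\sum_{k=-N}^{N}\alpha_{k,N}^{[a_{j,K},a_{j+1,K})} U^k g \;-\; P_{A_{j,K}} g\Big\|_{L_2(\nu)} \;=\; 0.
\]
Since the $K$-sum is finite, the triangle inequality lets me pass the inner limit inside, obtaining
\[
\lim_{N\to\infty}\Big\|\sum_{j=1}^{K} e^{i2\pi\theta_{j,K}}\sum_{k=-N}^{N}\alpha_{k,N}^{[a_{j,K},a_{j+1,K})} U^k g \;-\; U_K g\Big\|_{L_2(\nu)} \;=\; 0,
\]
where $U_K := \sum_{j=1}^{K} e^{i2\pi\theta_{j,K}} P_{A_{j,K}}$ is exactly the operator defined in~(\ref{eq:UK}).

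Next I would invoke the preceding theorem on strong operator convergence of $U_K$ to $U$. The hypothesis to check is~(\ref{eq:diamConv}), i.e.\ that $\max_{j}\mathrm{diam}(A_{j,K}) = \max_{j}|a_{j+1,K}-a_{j,K}|\to 0$ as $K\to\infty$; this is exactly the assumption of the corollary. The $\theta_{j,K}=(a_{j,K}+a_{j+1,K})/2$ lie inside $A_{j,K}$, so the earlier theorem applies verbatim and gives $\|U_K g - Ug\|_{L_2(\nu)}\to 0$ as $K\to\infty$ for every $g\in L_2(\nu)$.

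Combining the two limits via the triangle inequality yields~(\ref{eq:corolaryEquation}). There is no real obstacle here because the two heavy lifts have already been done upstream: the non-trivial pointwise-to-operator passage is encapsulated in Theorem~\ref{thm:projAbstract}, and the non-trivial partition refinement argument is encapsulated in the preceding theorem on strong convergence of $U_K$. The only minor care point is the iterated (rather than joint) nature of the limits, which is why the triangle-inequality splitting must be done with $K$ fixed before sending $N\to\infty$, and only afterwards sending $K\to\infty$.
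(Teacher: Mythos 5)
Your proposal is correct and follows exactly the route the paper intends: the paper presents Corollary~\ref{cor:UapproxConv} as ``immediate'' from Theorem~\ref{thm:projAbstract} (inner limit in $N$, giving the approximate projections converging in $L_2(\nu)$ to $P_{A_{j,K}}g$, extended to the finite $j$-sum by the triangle inequality) and from the strong-operator-topology convergence theorem for $U_K$ (outer limit in $K$, whose hypothesis~(\ref{eq:diamConv}) is exactly the corollary's assumption), and your two-step chaining with the triangle inequality is precisely that argument made explicit. The one small care point you correctly flag --- the partition covers $[0,1)$ rather than $[0,1]$, which is harmless because $\theta=0$ and $\theta=1$ are identified on~$\Tb$ --- is consistent with the paper's symmetry convention in Footnote~3.
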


The question we want to address is on how to choose the interval endpoints $a_{j,K}$ in an informed way based on the spectral analysis of $U$ from Sections~\ref{sec:CD} and \ref{sec:weakApprox}. In particular, under what conditions one should consider a more general partition $(A_{j,K})_{j=1}^K$ with some of the $A_{j,K}$ being singleton and how to choose $\theta_{j,K}$ better than the interval midpoints. The information available to us is (an approximation of) the measure $\mu_f$ and the goal is to construct a partition such that the approximation $U_K$ is accurate on the  cyclic subspace $\mathcal{H}_f$  associated to $f$ (defined in (\ref{eq:cyclic})). In general, fixing the value of $K$, the partition should be chosen fine in the regions where $\mu_f$ is large and coarse where $\mu_f$ is small. This is automatically achieved if the masses $\mu_f(A_{j,K})$ of the partition elements are the same. However, if the measure $\mu_f$ has a non-zero atomic part, a partition with all masses being equal may not exist. Therefore we suggest the following procedure, where we let
\[
\mu_f = \sum_{k=1}^\infty w_k \delta_{\theta_k} + \mu_{\mr{ac}} + \mu_{\mr{sc}}.
\]

\begin{enumerate}
\item (Singletons) Define the singletons of the partition to be the locations $\theta_{k_j}$ of those atoms of $\mu_f$ for which the weight $w_{k_j} $ satisfies $w_{k_j}\ge  \mu_f([0,1]) / K = m_0 / K$. Assume there is $K_{\mr{at}}$ of such atoms and define $\bar\mu_f = \mu_f - \sum_{j=1}^{K_{\mr{at}}} w_{k_j}\delta_{k_j}$. This step can be carried using the CD kernel approximation to the atomic part of $\mu_f$ (Theorem~\ref{thm:atoms}).
\item (Intervals) If $K_{\mr{at}} < K$, define $K -K_{\mr{at}}+1$ interval endpoints such that \[\max_{j}\bar\mu_f([a_j,a_{j+1})) - \min_{j}\bar\mu_f([a_j,a_{j+1}))\] is minimized. The idea is that the intervals of the partition should have the same $\bar\mu_f$-measure. However, achieving exactly the same measure is not possible in general if atoms are still present in $\bar\mu_f$, hence the minimization of the variation. If $\bar\mu_f$ is atomless, then simply $a_j = F^{-1}_{\bar\mu_f}(b_j)$, where $(b_j)_{j=1}^{K-K_{\mr{at}}+1}$ constitutes a \emph{uniform} partition of $[0,\bar\mu_f([0,1])]$ to $K-K_{\mr{at}}$ intervals and $F_{\bar\mu_f}(t) = \bar\mu_f([0,t]) $ is the distribution function of $\bar\mu_f$. Finally, we subtract the atom locations from the intervals to which they belong (to prevent double counting). This step can be carried out by first subtracting from $m_k$ the moments of the atoms extracted in the first step, thereby obtaining the moments of $\bar\mu_f$. Subsequently an approximation to $F_{\bar\mu_f}$ can be constructed using the methods of Section~\ref{sec:weakApprox}.
\item (Choice of $\theta_{j,K}$) The frequencies $\theta_{j,K} \in A_{j,K}$ representing each element of the partition $A_{j,K}$ are chosen to be the conditional expectations $\frac{1}{\mu_f(A_{j,K})}\int_{A_{j,K}} \theta \, d\mu_f(\theta)  $. The conditional expectation can be approximated using the weak approximations to $\mu_f$ from Section~\ref{sec:weakApprox}. Note that, of course, if $A_{j,K} $ is a singleton $\{\theta_0\}$, then $\theta_{j,K} = \theta_0$.
\end{enumerate}

\section{Relation to Dynamic mode decomposition}\label{sec:dmd_relation}
In this section we briefly describe an interesting relation of the proposed method to the so-called Hankel Dynamic Mode Decomposition (Hankel DMD)~\cite{arbabi2017ergodic}, which is a variation of the classical DMD algorithm for spectral analysis of dynamical systems~\cite{schmid:2010}. The crucial fact for the argument presented is the following consequence of the spectral theorem\footnote{The consequence of the spectral theorem we are referring to here is the isomorphism between $H_f\subset L_2(\nu)$ and $L_2(\mu_f)$ where $U^k f \in H_f$ is identified with $z^k \in L_2(\mu_f)$, and the unitary equivalence of $U:H_f\to H_f$ and $M_z: L_2(\mu_f)\to L_2(\mu_f)$. }: the operators (sometimes called the \emph{finite central truncations} of the respective infinite matrices)
\[
U_N = P_N U P_N \quad \mr{and}\quad \tilde{U}_N = \pi_N M_z \pi_N
\]
have \emph{the same spectrum}. Here, $P_N$ and $\pi_N$ denote the  $L_2(\nu)$ respectively $L_2(\mu_f)$ orthogonal projections onto \[
\Fc_N = \mr{span}\{f,Uf,\ldots, U^{N-1}f\} \quad  \mr{and}\quad \Zc_N=\mr{span}\{1,z,\ldots,z^{N-1}\},
\]
respectively, and $M_z : L_2(\mu_f)\to L_2(\mu_f)$ denotes the multiplication-by-$z$ operator, i.e., $M_z \xi = z\xi$ for any function $\xi \in L_2(\mu_f)$.  In fact, more is true: the finite-dimensional operators $U_N$ and $\tilde{U}_N$ (restricted to $\Fc_N$ and $\Zc_N$) have \emph{identical matrix representations} in the bases $(f,Uf,\ldots,U^{N_0-1}f)$, respectively $(1,z,\ldots,z^{N_0-1})$, where $ N_0 \in \{1,\ldots, N\}$ is the largest power such that $(f,Uf,\ldots,U^{N_0-1}f)$ is linearly independent. We would like to emphasize here that the operator $U_N$ acts on the space $L_2(\nu)$, where $\nu$ is a measure on the abstract state space $X$ whereas $\tilde{U}_N$ acts on $L_2(\mu_f)$, where $\mu_f$ is a measure defined on $\Cb$. Therefore, remarkably, the operator $\tilde{U}_N$, acting on a concrete, well-understood, space $L_2(\mu_f)$, contains all information about $U_N$, which acts on an abstract, intangible, space $L_2(\nu)$.

In order to use this fact to understand DMD in terms of the scope of the current work, we use two known facts. First, the operator $U_N$ (restricted to $\Fc_N$) is precisely the Hankel dynamic mode decomposition operator, in the limit as the number of samples used in the Hankel DMD goes to infinity (see~\cite{arbabi2017ergodic,korda2018convergence}). Second, the monic degree-$N$ orthogonal polynomial $\Phi_N$ with respect to $\mu_f$ is equal to the characteristic polynomial of $\tilde{U}_N$ (and hence of $U_N$); see~\cite[Theorem~1.2.6]{simonSzegoDescendants}. 

This leads to the following theorem:
\begin{theorem}\label{thm:dmd}
Suppose that orthogonal polynomials up to degree $N$ associated to $\mu_f $ exist\footnote{The existence and uniqueness (up to scaling) of orthogonal polynomials up to degree $N$ for $\mu_f$ is assured if the support of $\mu_f$ contains at least $N+1$ distinct points. In particular, if the support of $\mu_f$ contains infinitely many points, there exists a unique set of monic orthogonal polynomials $\{\Phi_k\}_{k=0}^\infty$ with $\mr{\deg}\,\Phi_k =k$.}. Then the monic degree-$N$ orthogonal polynomial for $\mu_f$, $\Phi_N$, is equal to the characteristic polynomial of the Hankel DMD operator $U_N$. In particular, the zeros of $\Phi_N$ are equal to the eigenvalues of $U_N$, including multiplicities.
\end{theorem}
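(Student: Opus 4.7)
The plan is to chain together the two ingredients already assembled in the paragraph preceding the theorem: the unitary equivalence of $U_N$ and $\tilde{U}_N$, and the classical identification of the characteristic polynomial of $\tilde{U}_N$ with $\Phi_N$.

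First I would record the spectral-theorem isomorphism $\Hc_f \to L_2(\mu_f)$ sending $U^k f \mapsto z^k$ and intertwining $U$ with $M_z$. Under this map $\Fc_N$ is carried bijectively onto $\Zc_N$, so the compressions $U_N = P_N U P_N|_{\Fc_N}$ and $\tilde{U}_N = \pi_N M_z \pi_N|_{\Zc_N}$ are unitarily equivalent. Equivalently, in the ordered bases $(f, Uf,\dots, U^{N-1}f)$ and $(1,z,\dots, z^{N-1})$ the two operators admit identical matrix representations, because every inner product $\langle U^i f, U^j f\rangle_{L_2(\nu)}$ reduces to the moment $m_{i-j}$, which equals $\langle z^i, z^j\rangle_{L_2(\mu_f)}$. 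In particular, $\det(\lambda I - U_N) = \det(\lambda I - \tilde{U}_N)$.

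Second, I would invoke \cite[Theorem~1.2.6]{simonSzegoDescendants}, which asserts that the monic degree-$N$ orthogonal polynomial $\Phi_N$ for $\mu_f$ is precisely the characteristic polynomial of $\tilde{U}_N$. Combining this with the previous step yields $\det(\lambda I - U_N) = \Phi_N(\lambda)$, which is the main claim. The assertion about roots and multiplicities is then automatic: the roots of the characteristic polynomial of a matrix coincide with its eigenvalues counted with algebraic multiplicity.

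The only point requiring a brief verification is that the hypothesis ``orthogonal polynomials up to degree $N$ exist'' is the correct nondegeneracy assumption for the argument. Via non-vanishing of the Gram determinant $\det \mathbf{M}_N$, this hypothesis is equivalent to $\mr{supp}\,\mu_f$ containing at least $N+1$ distinct points, which under the isomorphism above is in turn equivalent to $(f, Uf,\dots, U^{N-1}f)$ being linearly independent in $L_2(\nu)$. Both bases therefore have size exactly $N$, both compressions are genuine $N \times N$ matrices, and both characteristic polynomials have degree exactly $N$, so the identification is between polynomials of matching degree. There is no substantive obstacle beyond this bookkeeping; once the $\Hc_f \simeq L_2(\mu_f)$ isomorphism and Simon's identification are in place, the theorem is a one-line consequence, which is why I would present the proof as essentially a pointer to these two facts.
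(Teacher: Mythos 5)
Your proof reconstructs precisely the argument the paper leaves implicit in the paragraph preceding the theorem: identify the Hankel DMD operator with $U_N$, transfer to $\tilde U_N = \pi_N M_z \pi_N$ via the spectral-theorem isomorphism $\Hc_f \cong L_2(\mu_f)$ (identical matrix representations because $\langle U^i f, U^j f\rangle = m_{i-j} = \langle z^i, z^j\rangle_{L_2(\mu_f)}$), and then invoke Simon's \cite[Theorem~1.2.6]{simonSzegoDescendants} identifying $\Phi_N$ with the characteristic polynomial of $\tilde U_N$. One small off-by-one in the closing bookkeeping: $\det\mathbf{M}_N \ne 0$ (equivalently, $\operatorname{supp}\mu_f$ having at least $N+1$ points) corresponds under the isomorphism to linear independence of the $N+1$ vectors $(f, Uf, \ldots, U^{N}f)$, and it only \emph{implies}, rather than being equivalent to, linear independence of the $N$-element list $(f, \ldots, U^{N-1}f)$; that implication is the direction you actually use, so the argument is unaffected.
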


We remark that the monic orthogonal polynomial $\Phi_N$ can be readily obtained by normalizing the leading coefficients of the orthonormal polynomials $\varphi_N$ for $\mu_f$ obtainable using the Cholesky decomposition of the moment matrix of $\mu_f$ defined in~(\ref{eq:momMat}) as in~(\ref{eq:cholesky}) with $\tilde{\mu}$ replaced by $\mu = \mu_f$.

\subsection{Properties of Hankel DMD eigenvalues}
Theorem~\ref{thm:dmd} allows us to study the behavior of the eigenvalues of the Hankel DMD operator $U_N$ by studying the zeros of $\Phi_N$ whose behavior is well understood. For example, we have the following slightly surprising corollary:
\begin{corollary}\label{cor:inUnCircle}
If the subspace spanned by $\{f,Uf,U^2f,\ldots\}$ is infinite-dimensional, then all eigenvalues of the Hankel DMD operator $U_N$ lie strictly inside the unit circle; in particular, no eigenvalue of $U_N$ lies on the boundary of the unit circle (where all the eigenvalues of $U$ lie).
\end{corollary}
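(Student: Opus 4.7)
The plan is to reduce the corollary to the classical Szegő fact that the zeros of the monic orthogonal polynomials on the unit circle associated to a measure of infinite support lie strictly inside $\{|z|<1\}$. The bridge is Theorem~\ref{thm:dmd}, which identifies the eigenvalues of $U_N$ (counted with multiplicity) with the zeros of the monic degree-$N$ orthogonal polynomial $\Phi_N$ for $\mu_f$.

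First I would verify that $\Phi_N$ is well-defined for every $N$. The spectral theorem yields a unitary isomorphism between $\Hc_f$ and $L_2(\mu_f)$ sending $U^k f \mapsto z^k$; under this isomorphism, infinite-dimensionality of $\mr{span}\{f, Uf, U^2 f, \ldots\}$ in $L_2(\nu)$ is equivalent to infinite-dimensionality of $\mr{span}\{1,z,z^2,\ldots\}$ in $L_2(\mu_f)$, which is in turn equivalent to $\spt(\mu_f)$ being an infinite set. By the footnote to Theorem~\ref{thm:dmd}, this guarantees the existence and uniqueness of $\Phi_N$ for every $N \in \Nb$.

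Next I would show by contradiction that every zero $z_0$ of $\Phi_N$ satisfies $|z_0| < 1$. Factor $\Phi_N(z) = (z-z_0)Q(z)$ with $Q$ monic of degree $N-1$, and consider the one-parameter family of monic degree-$N$ competitors $P_w(z) = (z-w)Q(z)$, $w \in \Cb$. Setting $a = \int |Q|^2\,d\mu_f$ and $b = \int z\,|Q|^2\,d\mu_f$, a direct expansion gives $\int |P_w|^2\,d\mu_f = (1+|w|^2)a - 2\,\mr{Re}(\bar w b)$, minimized at $w^\ast = \bar b/a$ with value $a - |b|^2/a$. Because $\mu_f$ has infinite support while $Q$ has at most $N-1$ zeros, the positive measure $|Q|^2\,d\mu_f$ is supported on at least two distinct points of $\Tb$, forcing the Cauchy--Schwarz estimate $|b| \le a$ (using $|z|=1$ on $\Tb$) to be strict, i.e., $|b| < a$. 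Comparing with $\int |\Phi_N|^2\,d\mu_f = (1+|z_0|^2)a - 2\,\mr{Re}(\bar z_0 b)$ for $|z_0| \ge 1$, an AM--GM step using the bound $|b| < a \le |z_0|\,a$ then yields the strict inequality $\int |\Phi_N|^2\,d\mu_f > \int |P_{w^\ast}|^2\,d\mu_f$, contradicting the extremal characterization of $\Phi_N$ as the $L_2(\mu_f)$-minimizer among monic polynomials of degree $N$. Together with Theorem~\ref{thm:dmd}, this yields the corollary.

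The main obstacle is the boundary case $|z_0|=1$: the naïve Blaschke-type reflection $z_0 \mapsto 1/\bar z_0$, which cleanly handles $|z_0|>1$ by shrinking $\int |\Phi_N|^2\,d\mu_f$ by the factor $1/|z_0|^2$, fixes $\Tb$ pointwise and so provides no improvement on the unit circle. The optimization over the full continuous family $(z-w)Q$ is what rescues the boundary case, and the infinite-support hypothesis is precisely the ingredient that makes the underlying Cauchy--Schwarz inequality strict.
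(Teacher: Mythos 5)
Your proof is correct and follows the same reduction as the paper: use the hypothesis to guarantee that $\mu_f$ has infinite support so that the monic orthogonal polynomials $\Phi_N$ exist, invoke Theorem~\ref{thm:dmd} to identify the eigenvalues of $U_N$ with the zeros of $\Phi_N$, and then use the classical fact that zeros of orthogonal polynomials on the unit circle lie strictly inside the open disk. Where the paper simply cites \cite[Theorem~1.8.4]{simonSzegoDescendants} for this last step, you give a self-contained variational proof: factor $\Phi_N=(z-z_0)Q$, compare against the family $P_w=(z-w)Q$, and show that infinite support of $\mu_f$ makes the inequality $|b|\le a$ strict (because $|Q|^2\,d\mu_f$ cannot be concentrated at a single point of $\Tb$), which together with the AM--GM step contradicts the $L_2(\mu_f)$-minimality of $\Phi_N$ among monic degree-$N$ polynomials whenever $|z_0|\ge 1$. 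This is a clean way to make the argument self-contained and to pinpoint exactly where the infinite-support hypothesis enters. One small slip: the minimizer of $w\mapsto(1+|w|^2)a-2\,\mathrm{Re}(\bar w b)$ is $w^\ast=b/a$, not $\bar b/a$; the minimum value $a-|b|^2/a$ you quote is nonetheless correct, so the conclusion is unaffected.
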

\begin{proof}
The assumption of the span of $\{f,Uf,U^2f,\ldots\}$ being infinite-dimensional implies that $\{1,z,z^2,\ldots\}$ is a linearly independent sequence in $L_2(\mu_f)$ and hence a full sequence of orthogonal polynomials for $\mu_f$ exists. By Theorem~\cite[Theorem~1.8.4]{simonSzegoDescendants}, the zeros of these orthogonal polynomials, and hence (by Theorem~\ref{thm:dmd}) the eigenvalues of $U_N$, all lie strictly inside the unit circle.
\end{proof}

Another interesting corollary concerns the asymptotics of the distribution of eigenvalues as~$N$ tends to infinity. The corollary works under the assumption of $\mu_f$ being \emph{regular} in the sense of~\cite[p. 121]{simonSzegoDescendants}; a sufficient condition for regularity is for the density $\rho$ of the absolutely continuous part of $\mu_f$ to satisfy the \emph{Szeg\H o condition}
\begin{equation}\label{eq:regDens}
\int_0^1\log(\rho(\theta))\,d\theta > -\infty.
\end{equation}
 This is implied for instance by $\rho$ being strictly positive and bounded away from zero.

Let now $\mu_{\mr{DMD}}^N$ be the normalized counting measure supported on the eigenvalues of $U_N$, including multiplicities, i.e., 
\begin{equation}\label{eq:countingMeas}
\mu_{\mr{DMD}}^N = \frac{1}{N}\sum_{j = 0}^{n_\lambda} a_{\lambda_{j,N}}\delta_{\lambda_{j,N}},
\end{equation}
where $\lambda_{1,N},\ldots, \lambda_{n_\lambda,N}$, $n_\lambda \le N$, are the eigenvalues of $U_N$ and $a_{\lambda_{j,N}}$ denotes their algebraic multiplicity.

\begin{corollary}\label{cor:uniformDist}
If $\mu_f$ is regular in the sense of~\cite[p. 121]{simonSzegoDescendants} (which is implied by condition~(\ref{eq:regDens})), then the normalized counting measures~$\mu_{\mr{DMD}}^N$ converge weakly to the uniform distribution on the unit circle.
\end{corollary}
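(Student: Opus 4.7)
The plan is to reduce the corollary to a classical statement about zero distributions of orthogonal polynomials on the unit circle (OPUC) and then cite the relevant theorem. First, I would apply Theorem~\ref{thm:dmd}: under the standing hypothesis (implicit via regularity) that $\mu_f$ has infinite support, the monic orthogonal polynomial $\Phi_N$ of degree $N$ exists, and its zeros (counted with multiplicity) coincide with the eigenvalues of $U_N$. Consequently, the normalized counting measure $\mu_{\mr{DMD}}^N$ defined in~(\ref{eq:countingMeas}) is exactly the normalized zero-counting measure of $\Phi_N$ in the complex plane. This identification is the one substantive step we need to set up the machinery; everything after it is an appeal to known OPUC theory.

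Next, I would invoke the classical zero-distribution theorem for regular measures on the unit circle (see, e.g., \cite[Theorem~2.17.3 and Chapter~2.17]{simonSzegoDescendants}): if $\mu$ is a probability measure on $\Tb$ that is regular in the sense of Stahl-Totik, then the normalized counting measures of the zeros of the monic OPUC $\Phi_N$ converge weakly to the equilibrium measure of $\Tb$, which is the normalized arc-length (Haar) measure on the unit circle. Two small remarks are needed to finish: (i) the statement is about measures on $\Cb$ supported near $\Tb$, which is consistent with the fact (Corollary~\ref{cor:inUnCircle}) that the zeros of $\Phi_N$ lie strictly inside the unit disk but, by regularity, must cluster near $\Tb$ asymptotically; (ii) normalization is immaterial since $m_0$ can be absorbed into the definition of $\mu_f$ (or one simply observes that only the measure-class matters for the orthogonal polynomials and their zeros). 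Finally, to justify the parenthetical assertion in the corollary, I would note that the Szeg\H o integrability condition~(\ref{eq:regDens}) implies regularity of $\mu_f$; this is standard, and is a direct consequence of the Szeg\H o theorem on the asymptotics of $\|\Phi_N\|_{L^2(\mu)}^{1/N}$ together with the definition of regularity through the $N$-th root asymptotics of the orthonormal polynomial norms.

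There is no real obstacle: the work is entirely bookkeeping around an existing OPUC result. The only place where care is needed is making explicit that $\mu_f$ having infinite support is implicit in the regularity hypothesis (so that $\Phi_N$ is well-defined for every $N$ and Theorem~\ref{thm:dmd} applies), and that the $N$ eigenvalues of $U_N$ in~(\ref{eq:countingMeas}) are exactly the $N$ zeros of $\Phi_N$ in $\Cb$ (with algebraic multiplicities), so no mass is lost in passing from one counting measure to the other.
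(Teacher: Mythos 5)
Your reduction exactly mirrors the paper's: Theorem~\ref{thm:dmd} identifies $\mu_{\mr{DMD}}^N$ with the normalized zero-counting measure of the monic orthogonal polynomial $\Phi_N$ for $\mu_f$, after which one cites a classical OPUC zero-distribution result (the paper points to Theorems~2.15.1 and~2.15.4 of Simon rather than your Theorem~2.17.3, but the architecture is identical). Your auxiliary remarks --- that regularity forces infinite support so that $\Phi_N$ exists for every $N$, that the Szeg\H o integrability condition implies regularity, and that normalization by $m_0$ is immaterial --- are all correct.

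However, the zero-distribution statement you lean on is not true as stated, and this is a genuine gap (shared, I believe, with the paper's own citation). It is \emph{not} the case that for every Stahl--Totik-regular measure on $\Tb$ the normalized zero-counting measure of the monic OPUC $\Phi_N$ converges weak-$\ast$ to the equilibrium measure. The OPRL analogue holds automatically because OPRL zeros live on the real support; but OPUC zeros lie strictly inside the open unit disk and are free to pile up in its interior. The canonical counterexample is $\mu_f$ equal to normalized Lebesgue arc-length on $\Tb$ --- perfectly attainable in the Koopman setting for an observable with summable autocorrelations (e.g.\ a Bernoulli shift). That $\mu_f$ is regular, indeed Szeg\H o, yet $\Phi_N(z)=z^N$, so $\mu_{\mr{DMD}}^N=\delta_0$ for all $N$: compatible with Corollary~\ref{cor:inUnCircle}, but very far from weak-$\ast$ convergence to arc-length. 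What Simon's Theorem~2.15.4 equidistributes for regular measures are the zeros of the \emph{paraorthogonal} polynomials, which live on $\Tb$ and are the Gaussian quadrature nodes, not the eigenvalues of $U_N$. To make the corollary hold you need either an additional hypothesis on $\mu_f$ beyond regularity that forces the zeros of $\Phi_N$ to cluster at $\Tb$ (e.g.\ suitable decay/non-decay conditions on the Verblunsky coefficients), or a different theorem; invoking regularity alone does not close the argument.
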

\begin{proof}
Follows from Theorem~\ref{thm:dmd} and from~\cite[Theorems 2.15.1 and 2.15.4]{simonSzegoDescendants} which prove this asymptotic behavior of the zeros of $\Phi_N$.
\end{proof}

This corollary has the following implication: Whenever the operator $U$ has a continuous spectrum and the observable $f$ is such that the density of the absolutely continuous part of $\mu_f$ is bounded away from zero, then the eigenvalues of $U_N$ will be, in the limit as $N\to\infty$, distributed uniformly on the unit circle, irrespective of the point and singular continuous parts of the spectrum.

\section{Numerical examples}
\subsection{Cat map}
This example analyzes the spectrum of Arnold's cat map
\[
\begin{array}{llll}
x_1^+ & = &  2x_1 + x_2 & \;\mr{mod}\;1 \\ 
x_2^+ & = & x_1 + x_2& \; \mr{mod}\;1.
\end{array}
\]
We use two different observables
\begin{align*}
f_1 &=  e^{i2\pi (2x_1+x_2)} + \frac{1}{2}e^{i2\pi (5x_1 + 3x_2)}, \\  
f_2 &=  e^{i2\pi (2x_1+x_2)} + \frac{1}{2}e^{i2\pi (5x_1 + 3x_2)}+\frac{1}{4}e^{i2\pi (13x_1 + 8x_2)}
\end{align*}
for which the associated measure $\mu_f$ is known analytically~(see~\cite{govindarajan2018approximation}) to be $\mu_{f_1}=\rho_{f_1} d\theta$ and $\mu_{f_2}=\rho_{f_2} d\theta$ with
\begin{align*}
\rho_{f_1} &= \frac{5}{4} + \cos(2 \pi\theta) \\
\rho_{f_2} & = \frac{ 21}{16} + (5/4)\cos(2\pi\theta) + \frac{1}{2}\cos(4 \pi \theta).
\end{align*}

We use~(\ref{eq:moment:approx}) with $M=10^5$ to approximately compute the first $N=100$ moments. Figure~\ref{fig:catMap_density} shows the approximations $\zeta_N(\theta)$, defined in~(\ref{eq:zeta_N}), of the densities. We observe a very good match for both observables. In order to numerically verify the absence of the singular parts of the spectra we also plot in Figure~\ref{fig:catmap_singul} the distribution function approximations and the singularity measure. In particular we observe that $F_{\zeta_N}$ matches very closely the other distribution function approximations (as well as the true distribution function), indicating the absence of the singular part of the spectrum. This is confirmed by the singularity indicator $\Delta_N$, defined in~(\ref{eq:singul_indicator}), being almost identically zero. For space reasons we show these plots only for the first observable, the results being almost identical for the second. Next, in Figure~\ref{fig:catMap_projection} we show the approximation of the spectral projection $P_{[a,b]}f$ for the observable $ f = e^{i2\pi (2x_1+x_2)}$ and interval $[a,b] = [0.125,0.375]$. At present, no analytical expression is known for this projection but the results seem to be in accordance\footnote{Due to a different scaling, the corresponding interval $[a,b]$ in~\cite{govindarajan2018approximation} is $[\pi/4,3\pi/4]$.} with numerical approximations obtained in~\cite{govindarajan2018approximation} using a very different method. We note that the prominent diagonal pattern in Figure~\ref{fig:catMap_projection} is aligned with the eigenvector $[1, -(1 +\sqrt{5})/2]$ associated to the stable eigenvalue of the matrix $\begin{bmatrix}2 & 1\\ 1 & 1\end{bmatrix}$ defining the dynamics; this direction corresponds to the stable foliation of the hyperbolic dynamical system.  Finally, in Figure~\ref{fig:catMap_zeros} we compare the eigenvalues of the Hankel DMD operator with the zeros of the $N$-th orthogonal polynomial associated to $\mu_f$ with $f = f_2$. In accordance with the results of Section~\ref{sec:dmd_relation}, the eigenvalues and the zeros almost coincide, the discrepancy being due to a finite number of samples ($M = 10^5$) taken. In addition, as predicted by Corollary~\ref{cor:inUnCircle} and \ref{cor:uniformDist}, the eigenvalues lie strictly inside the unit circle and become uniformly distributed on the unit circle in the limit as $N\to\infty$.


\begin{figure*}[th]
\begin{picture}(140,160)
\put(20,-5){\includegraphics[width=70mm]{./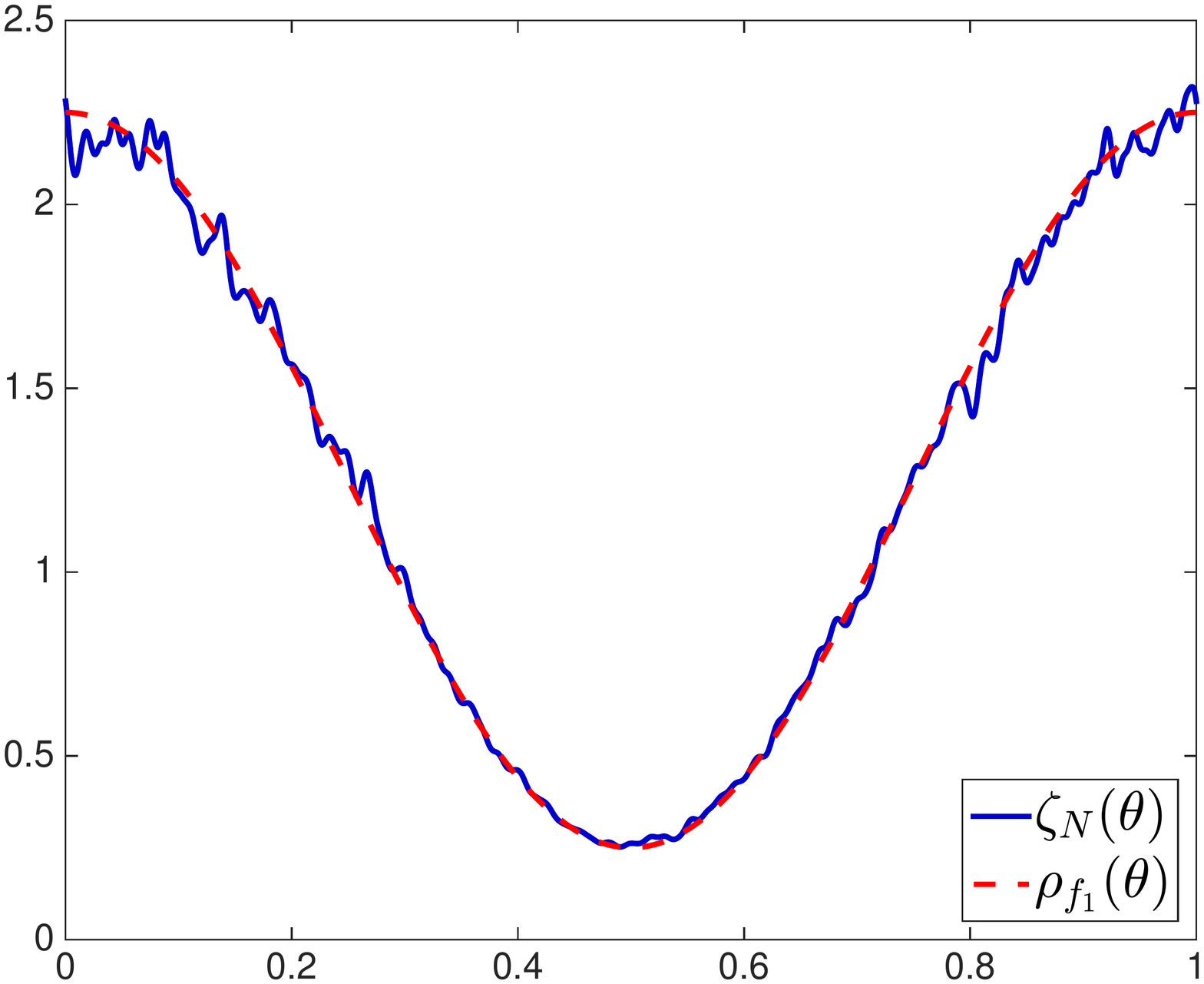}}
\put(235,-5){\includegraphics[width=70mm]{./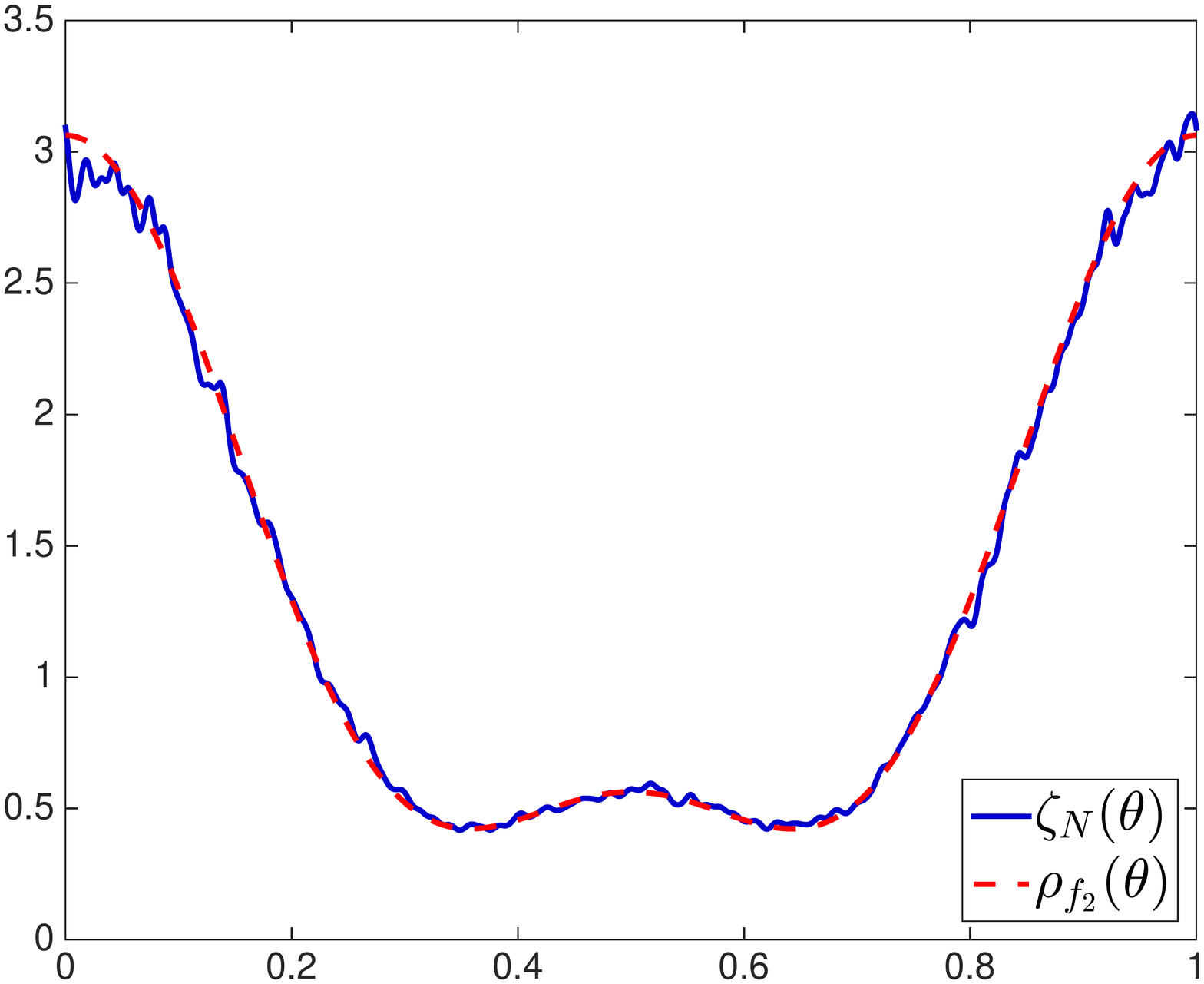}}

\put(120,-5){\footnotesize $\theta$}
\put(340,-5){\footnotesize $\theta$}

\end{picture}
\caption{\figCaptionSize Cat map -- Approximation of the densities by the CD kernel. Left: observable $f_1$. Right: observable $f_2$.}
\label{fig:catMap_density}
\end{figure*}

\begin{figure*}[th]
\begin{picture}(140,150)
\put(20,-5){\includegraphics[width=70mm]{./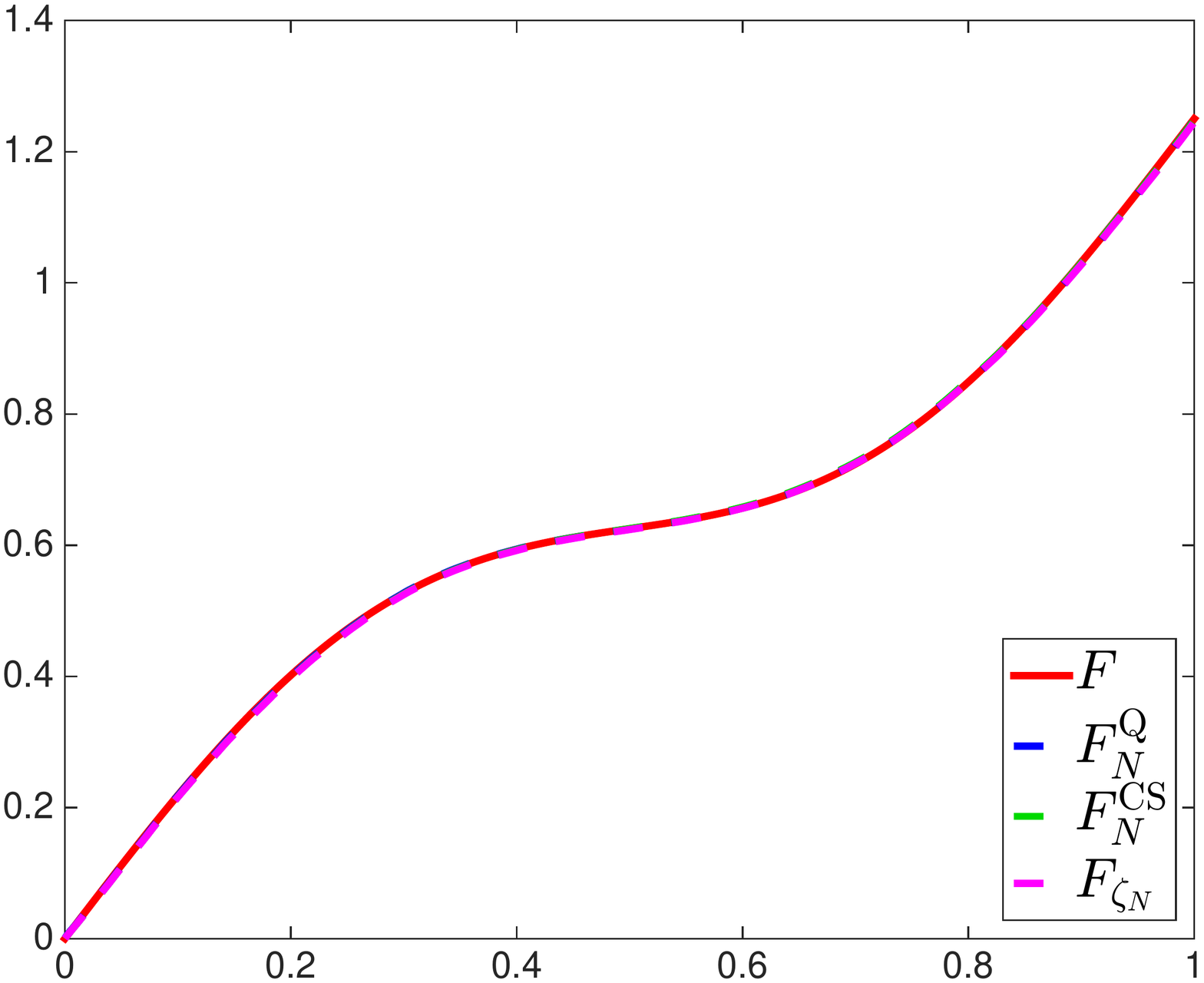}}
\put(235,-5){\includegraphics[width=70mm]{./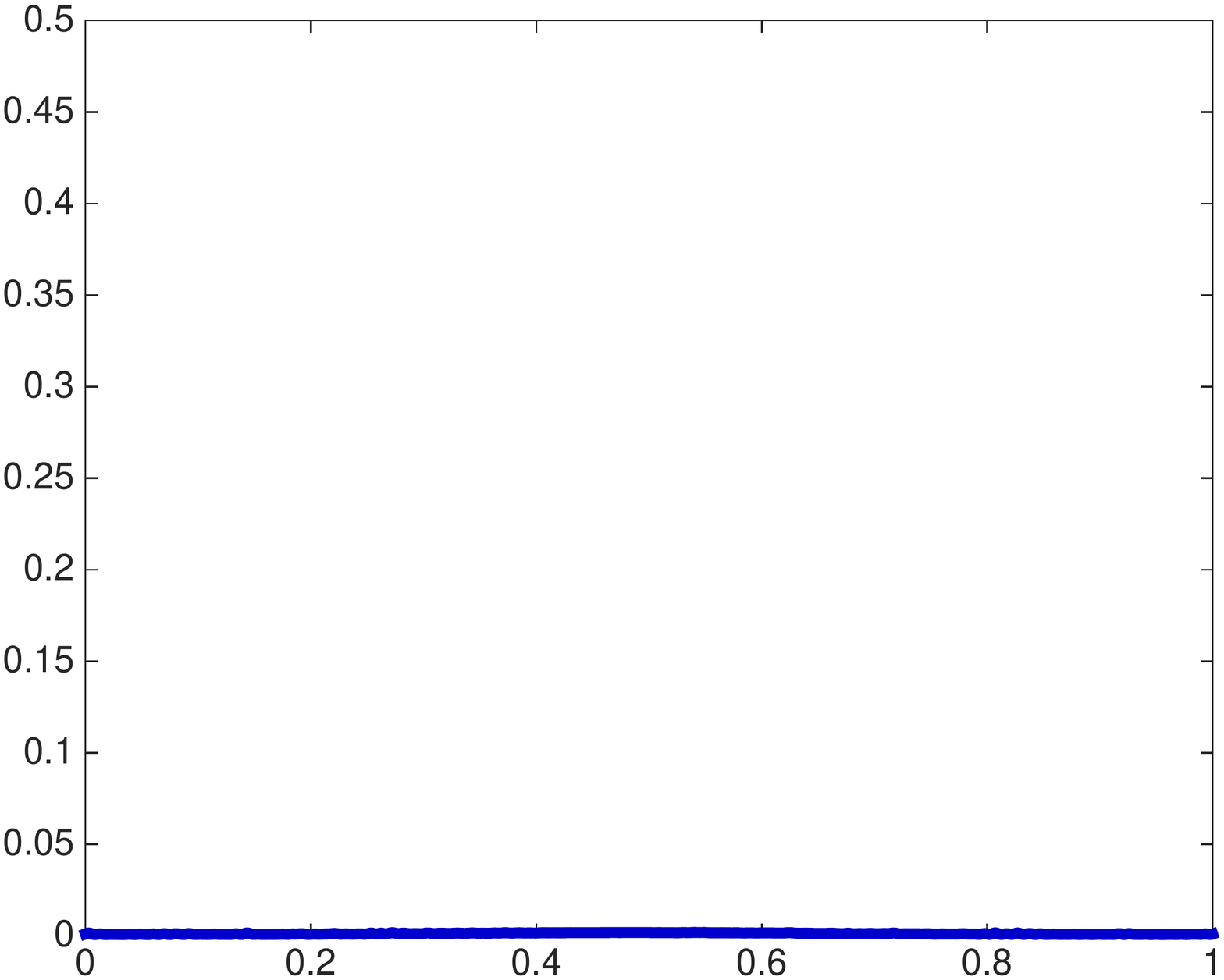}}

\put(120,-5){\footnotesize $\theta$}
\put(340,-5){\footnotesize $\theta$}

\end{picture}
\caption{\figCaptionSize Cat map -- Left: Approximation of the distribution function of $\mu_{f_1}$. Right: Singularity indicator $\Delta_N$ defined in~(\ref{eq:singul_indicator}). Both plots pertain to the observable $f_1$.}
\label{fig:catmap_singul}
\end{figure*}

\begin{figure*}[th]
\begin{picture}(140,180)
\put(10,0){\includegraphics[width=80mm]{./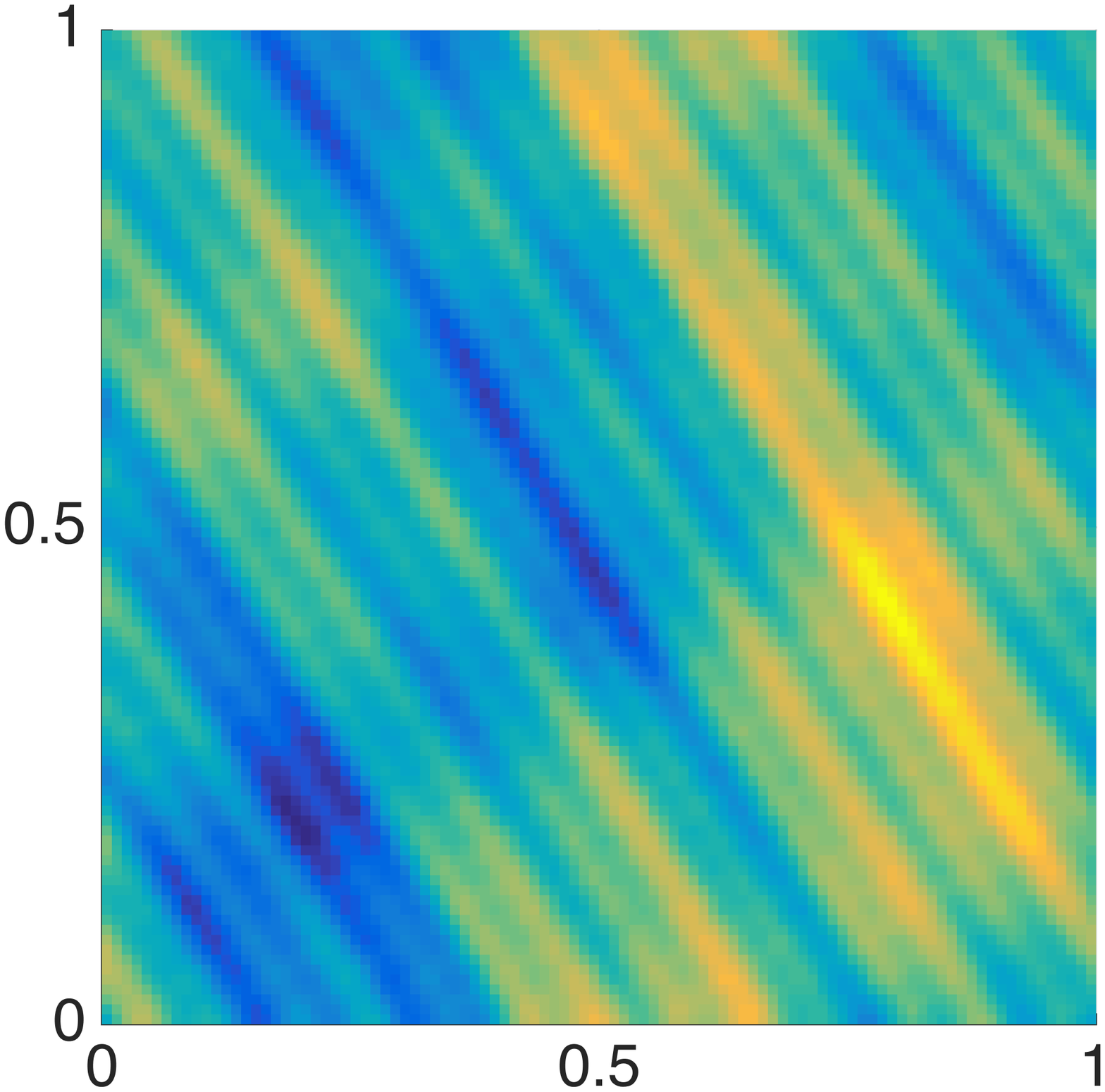}}
\put(225,0){\includegraphics[width=80mm]{./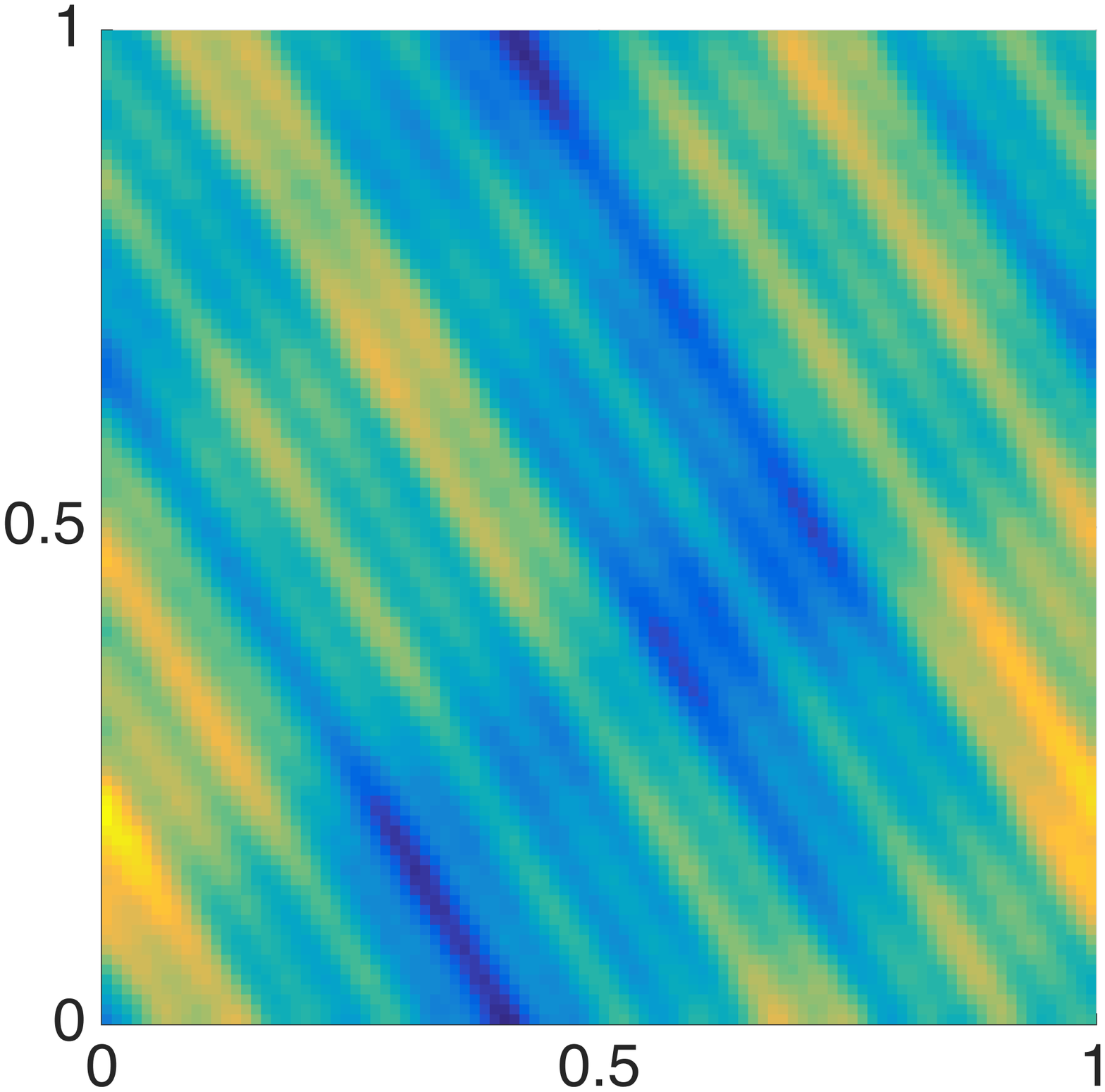}}

\put(123,-2){\small $x_1$}
\put(338,-2){\small $x_1$}
\put(20,88){\small $x_2$}
\put(237,88){\small $x_2$}
\put(105,170){\small Real part}
\put(310,170){\small Imaginary part}


\end{picture}
\caption{\figCaptionSize Cat map -- Approximation of the projection $P_{[a,b]}f$ with $f = e^{i2\pi (2x_1+x_2)}$ and $[a,b] = [0.125,0.375]$ and $N=100$, $M = 10^5$.}
\label{fig:catMap_projection}
\end{figure*}

\begin{figure*}[h!]
\begin{picture}(140,160)
\put(20,0){\includegraphics[width=70mm]{./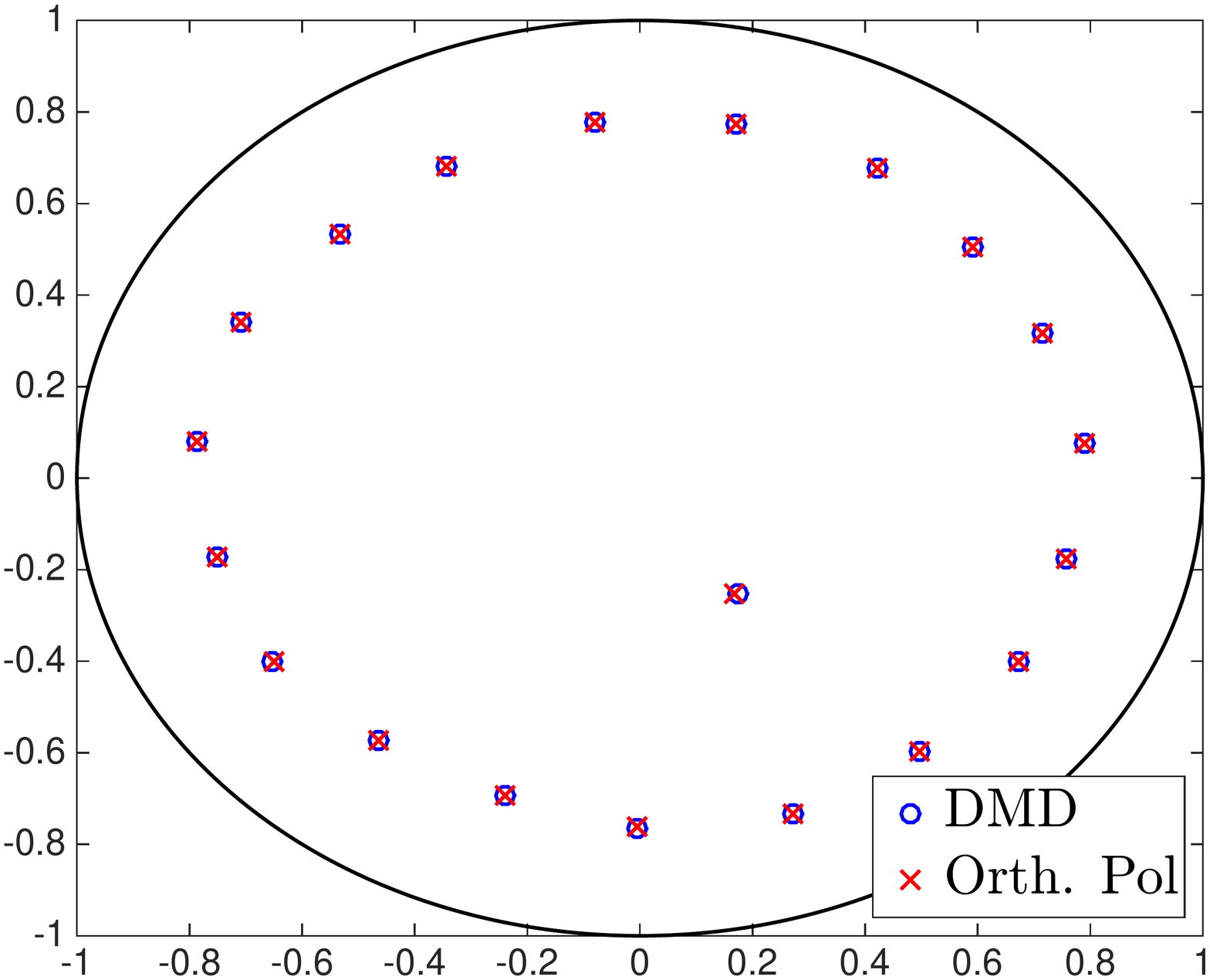}}
\put(235,0){\includegraphics[width=70mm]{./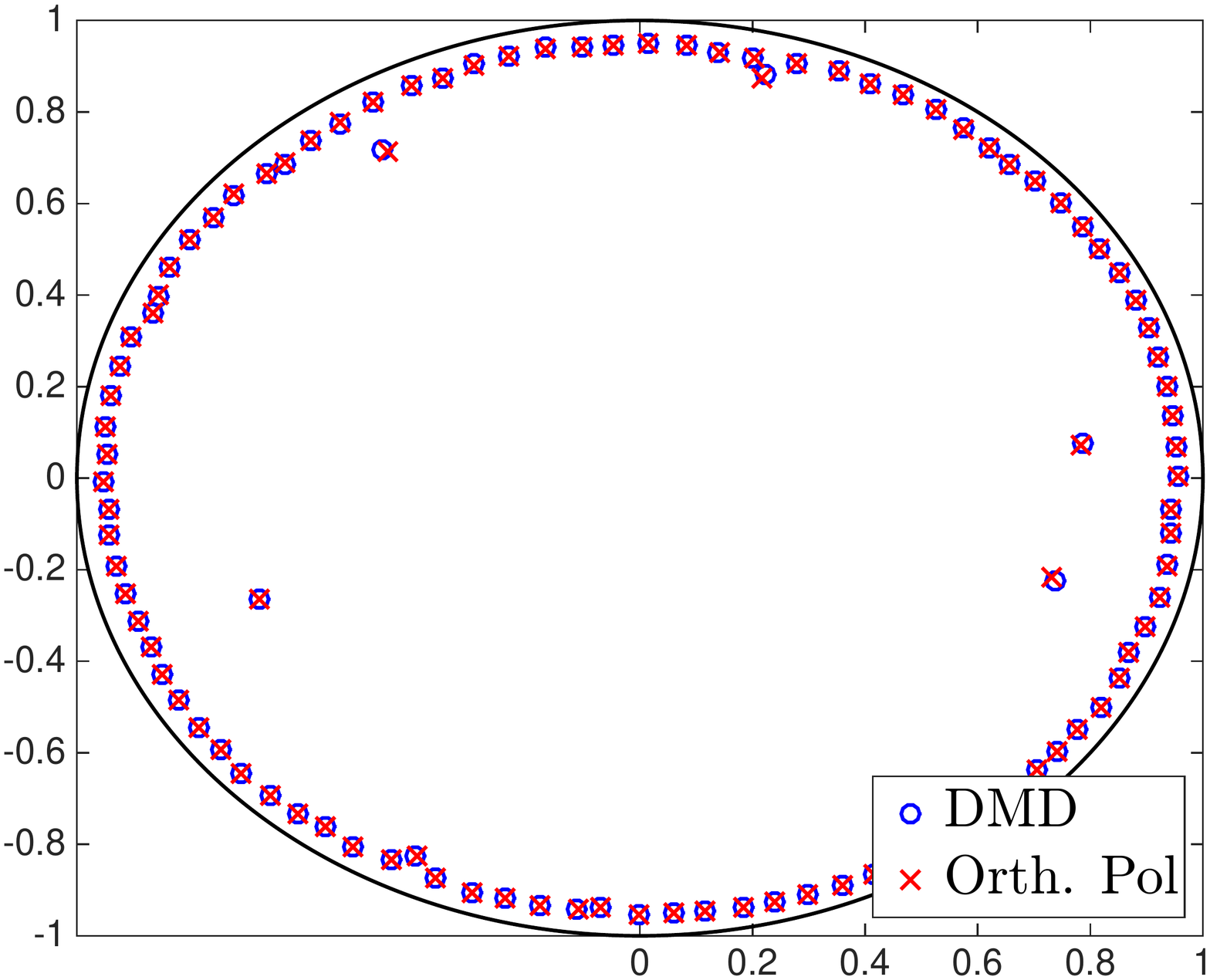}}

\put(105,0){\footnotesize Real part}
\put(320,0){\footnotesize Real part}
\put(20,45){\rotatebox{90}{\footnotesize Imaginary part}}
\put(235,45){\rotatebox{90}{\footnotesize Imaginary part}}
\put(105,147){\small $N = 20$}
\put(320,147){\small $N = 100$}


\end{picture}
\caption{\figCaptionSize Cat map -- Eigenvalues of the Hankel DMD operator and zeros of the $N$-th orthogonal polynomial of $\mu_f$. As predicted by Section~\ref{sec:dmd_relation}, the two sets of zeros almost coincide (the discrepancy being due to a finite number of samples taken) and lie strictly inside the unit circle $\Tb$, while approaching a uniform distribution on $
\Tb$ as $N$ increases. Both plots pertain to the observable $f_2$.}
\label{fig:catMap_zeros}
\end{figure*}

\begin{figure*}[!th]
\begin{picture}(140,380)
\put(0,0){\includegraphics[width=55mm]{./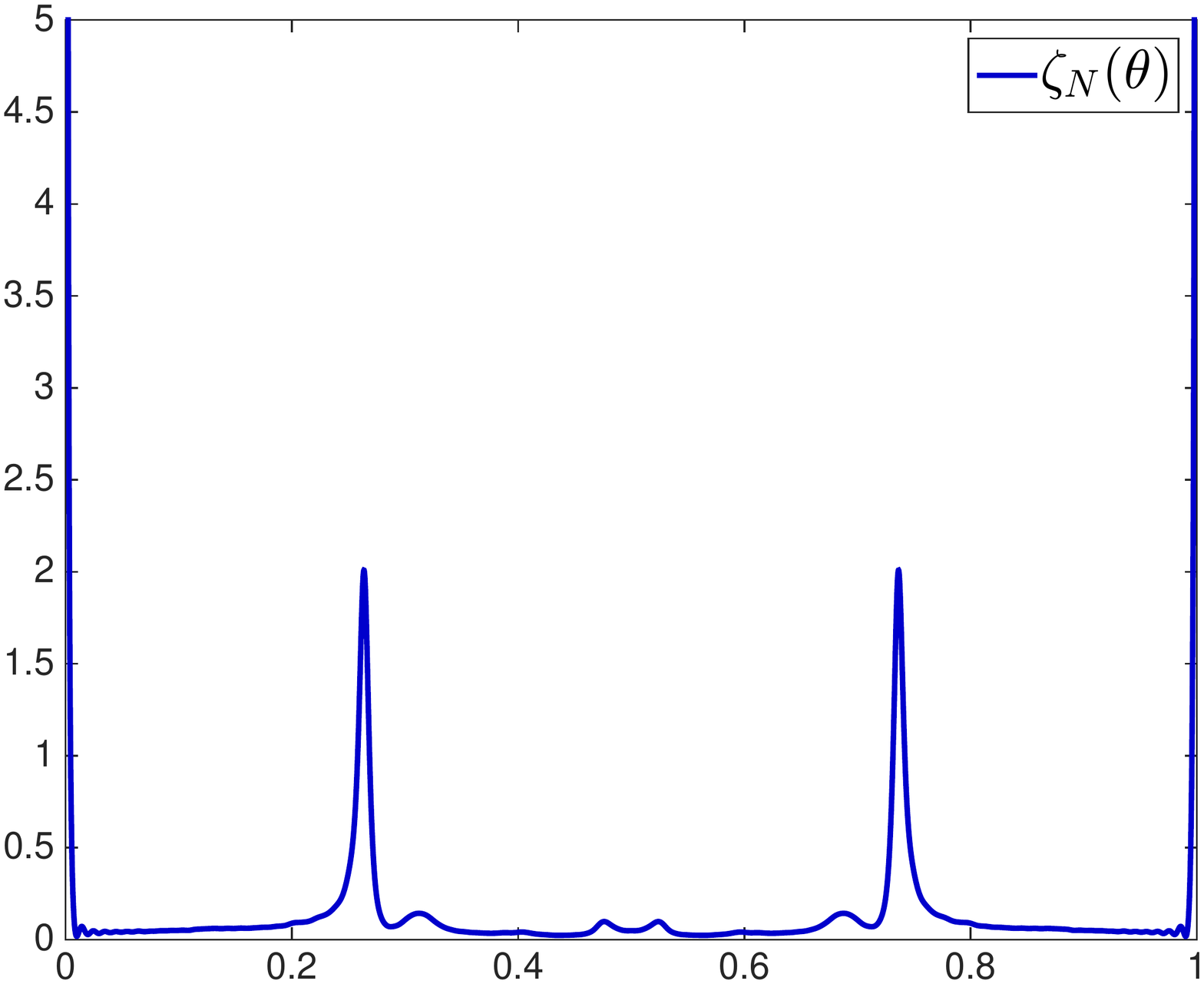}}
\put(150,0){\includegraphics[width=55mm]{./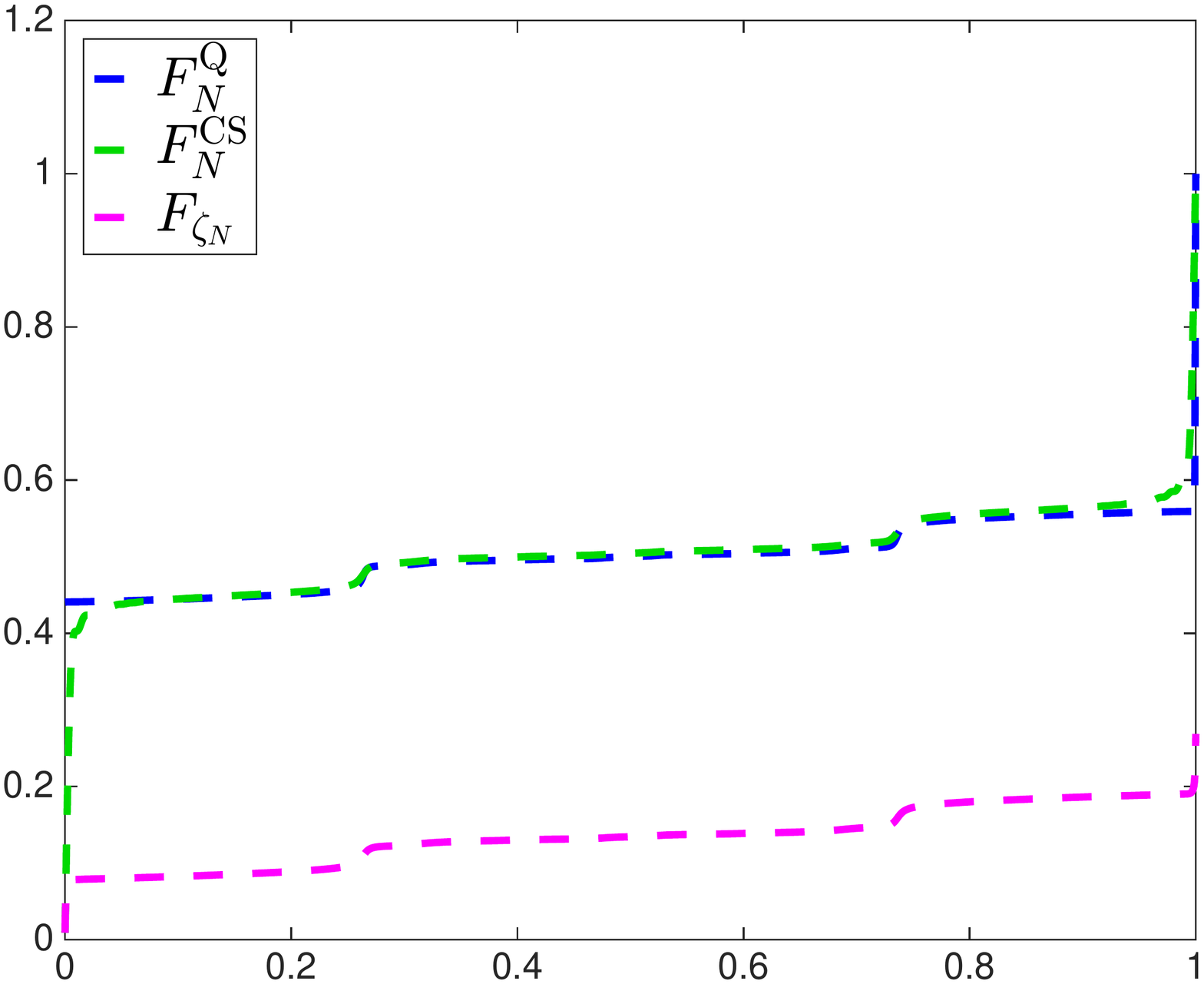}}
\put(300,0){\includegraphics[width=55mm]{./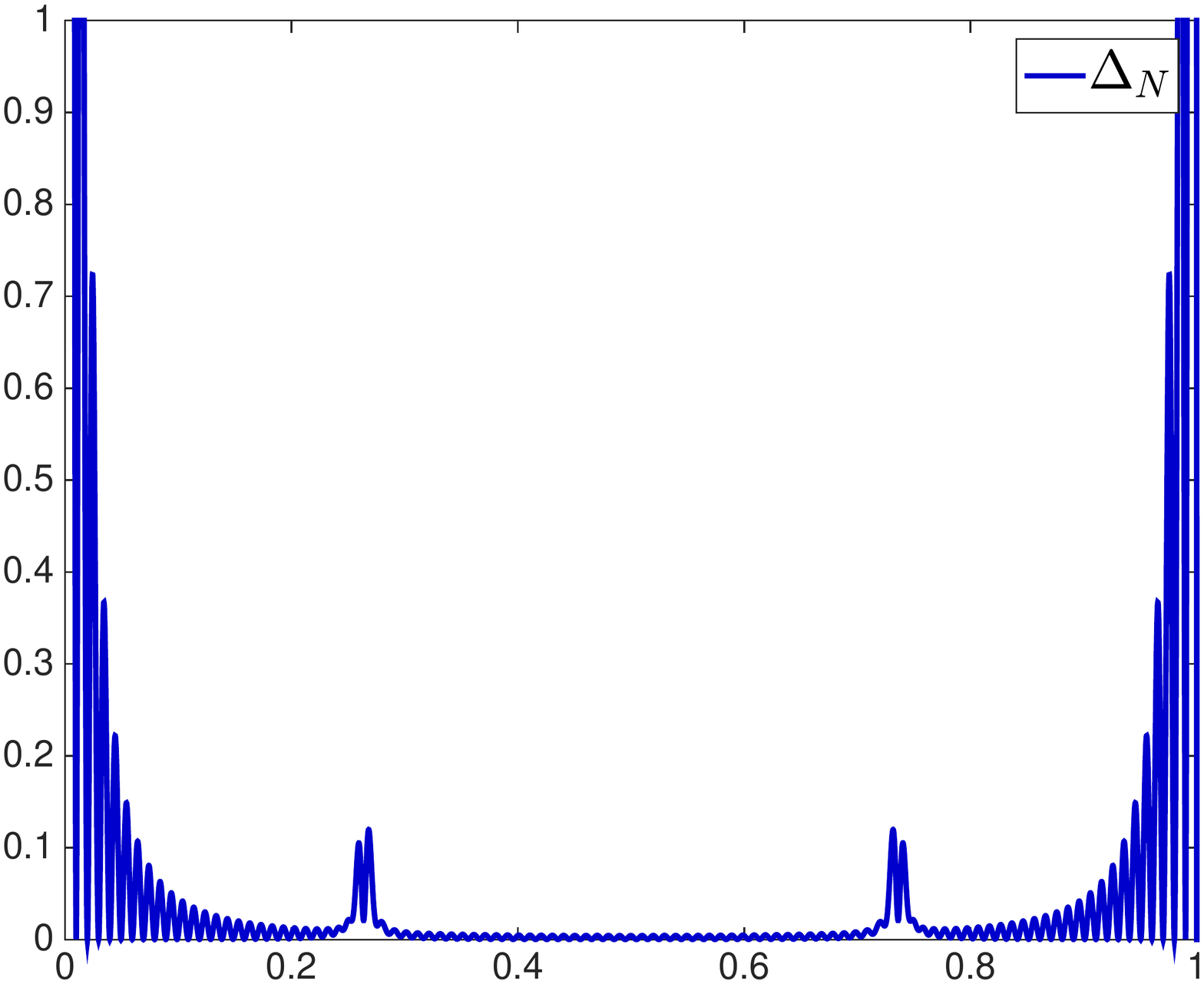}}

\put(0,120){\includegraphics[width=55mm]{./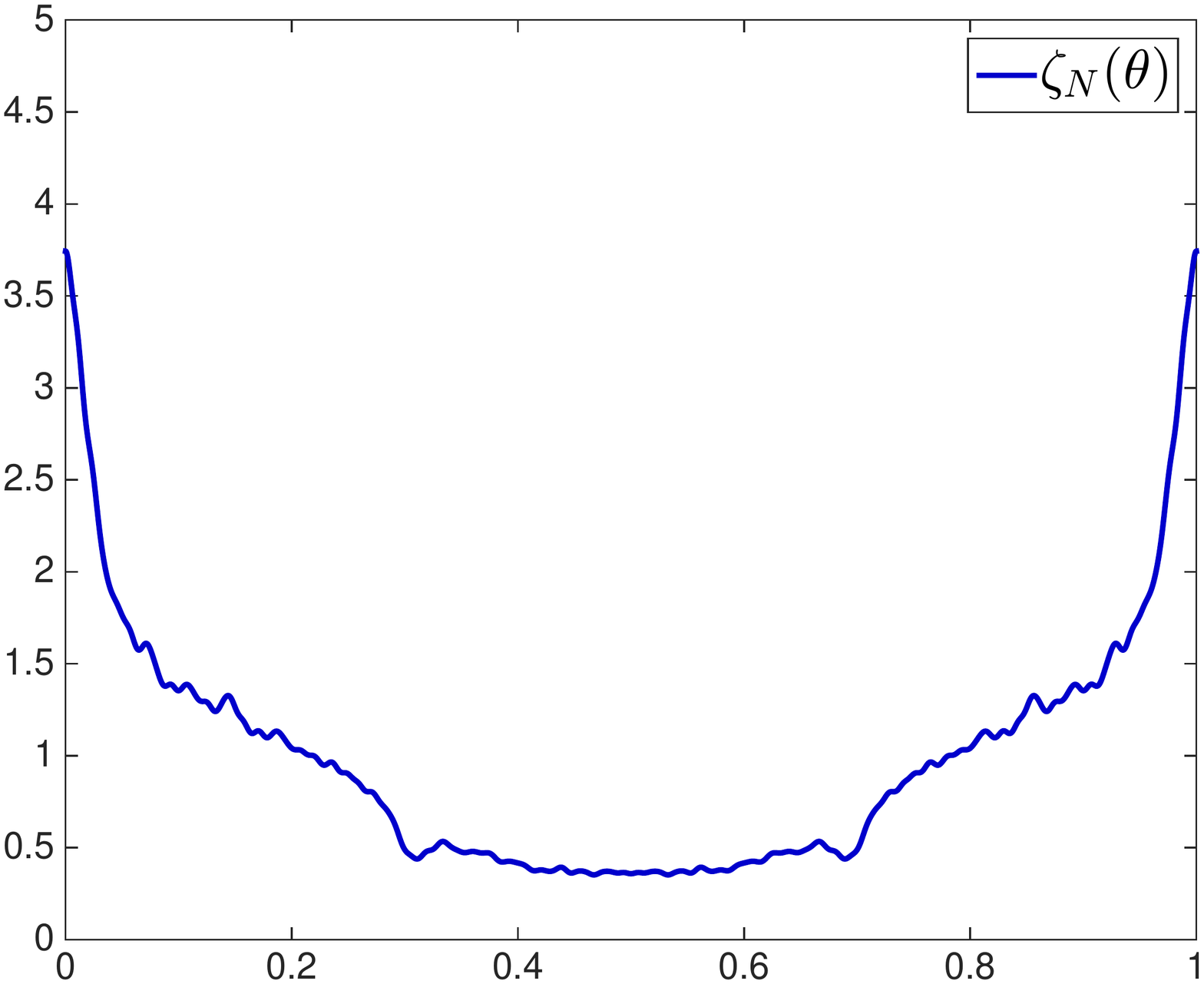}}
\put(150,120){\includegraphics[width=55mm]{./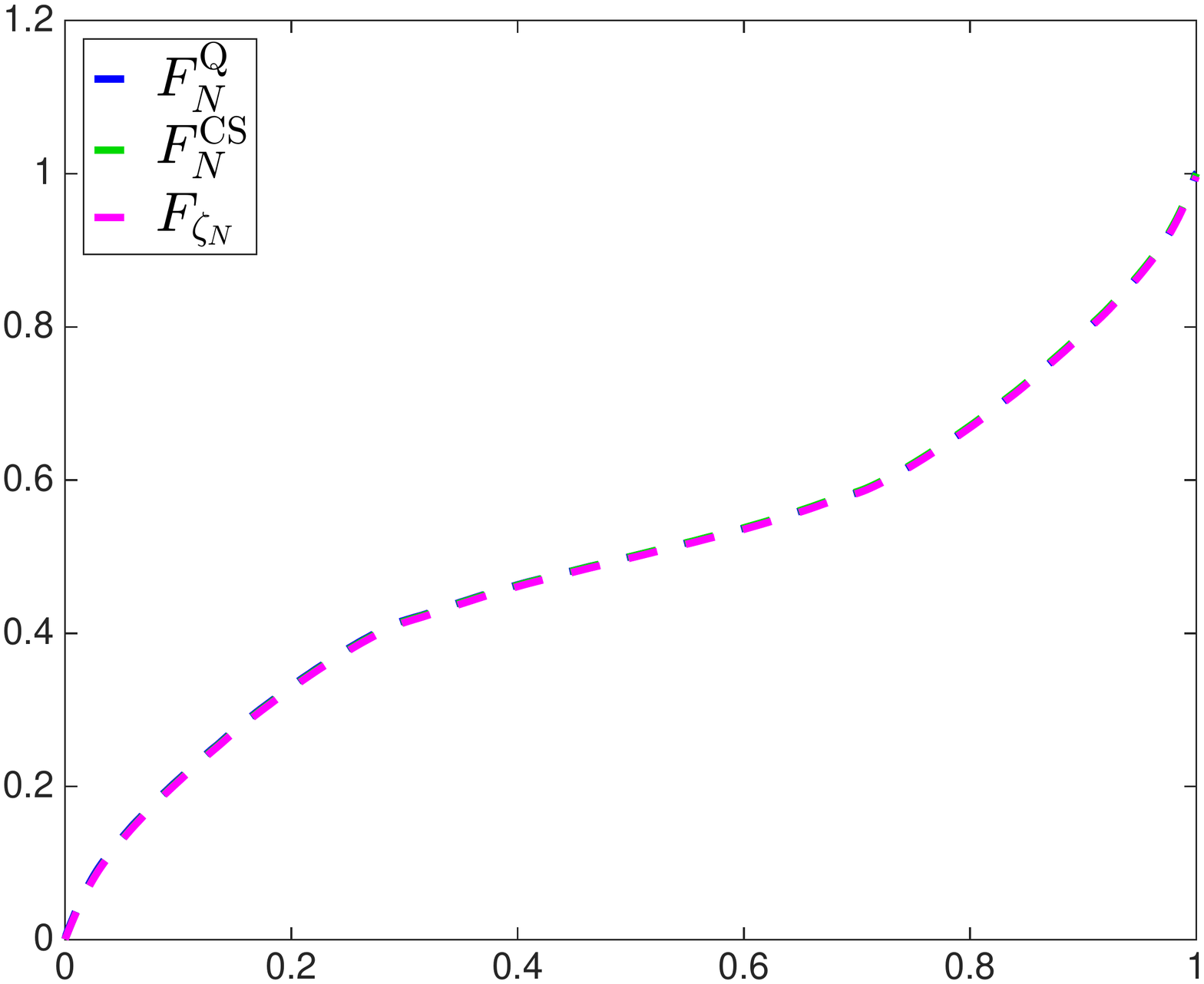}}
\put(300,120){\includegraphics[width=55mm]{./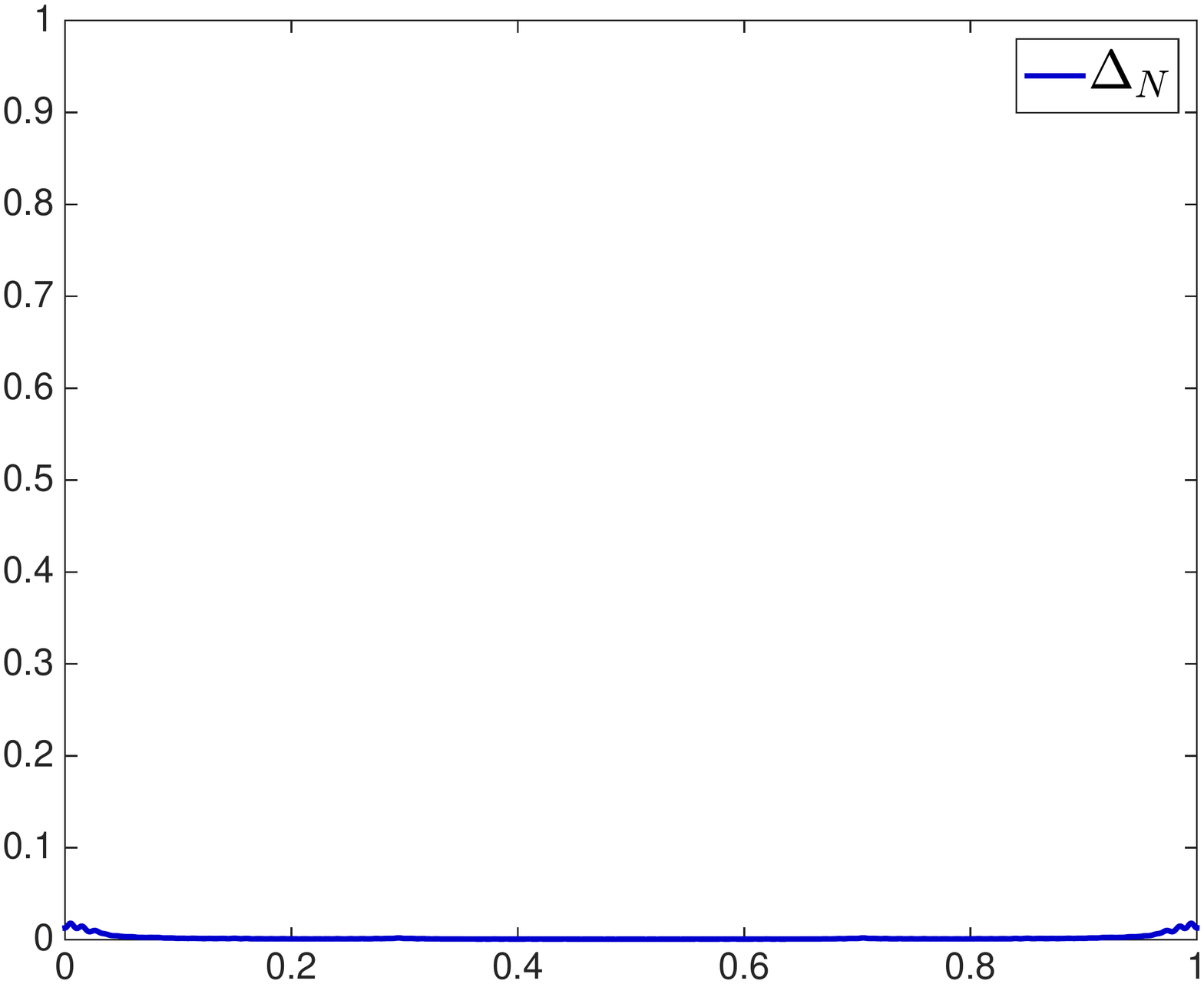}}

\put(0,240){\includegraphics[width=55mm]{./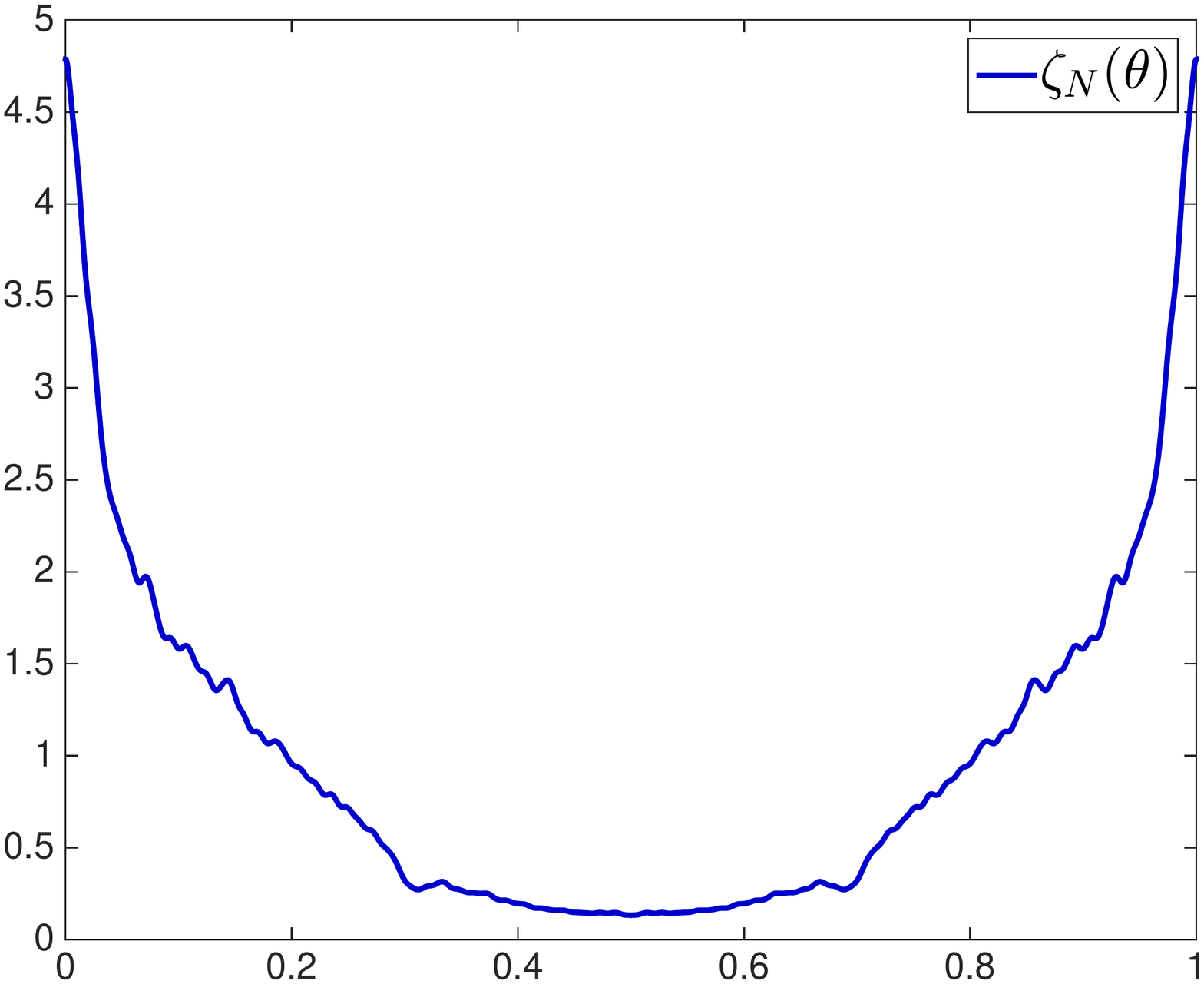}}
\put(150,240){\includegraphics[width=55mm]{./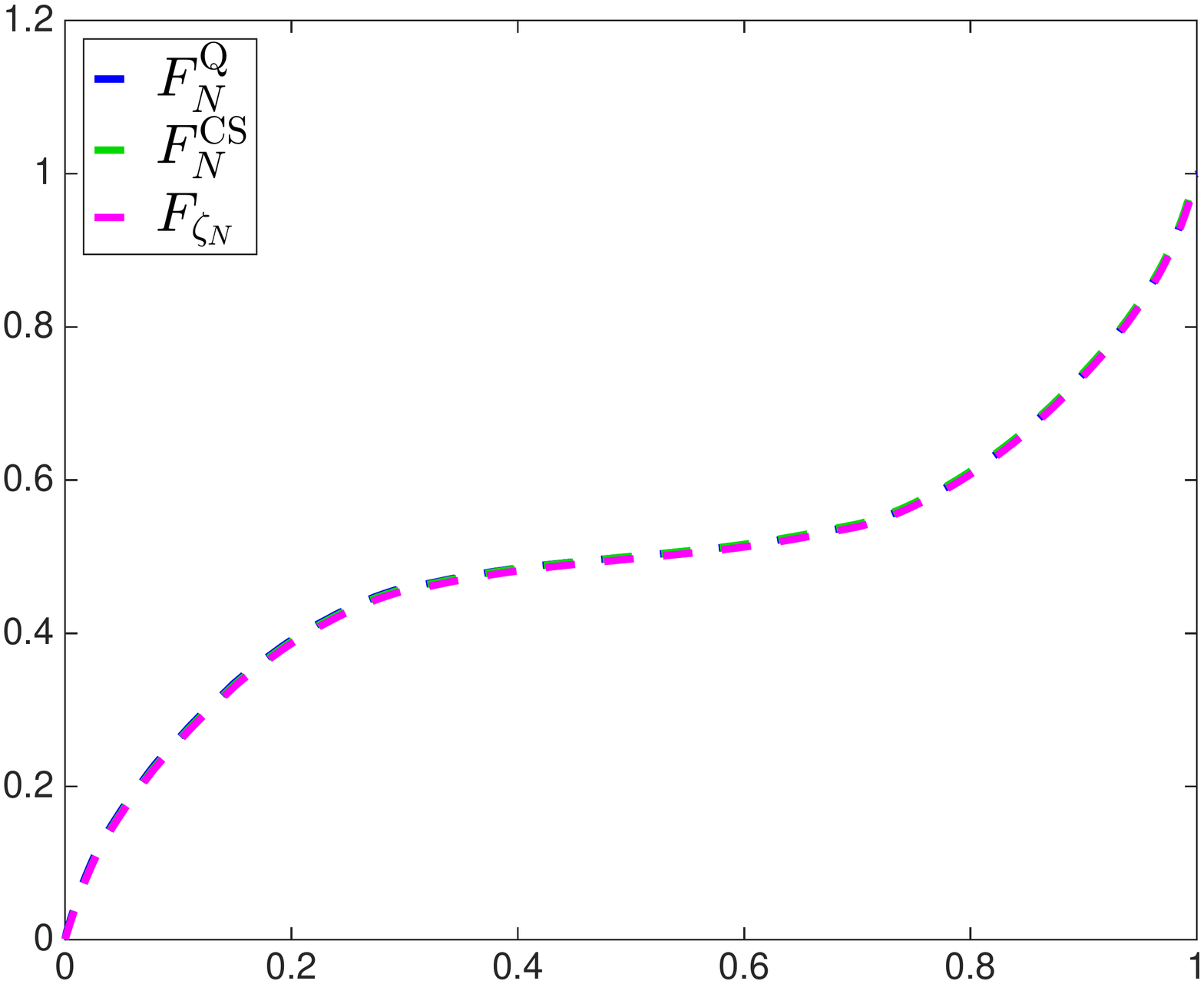}}
\put(300,240){\includegraphics[width=55mm]{./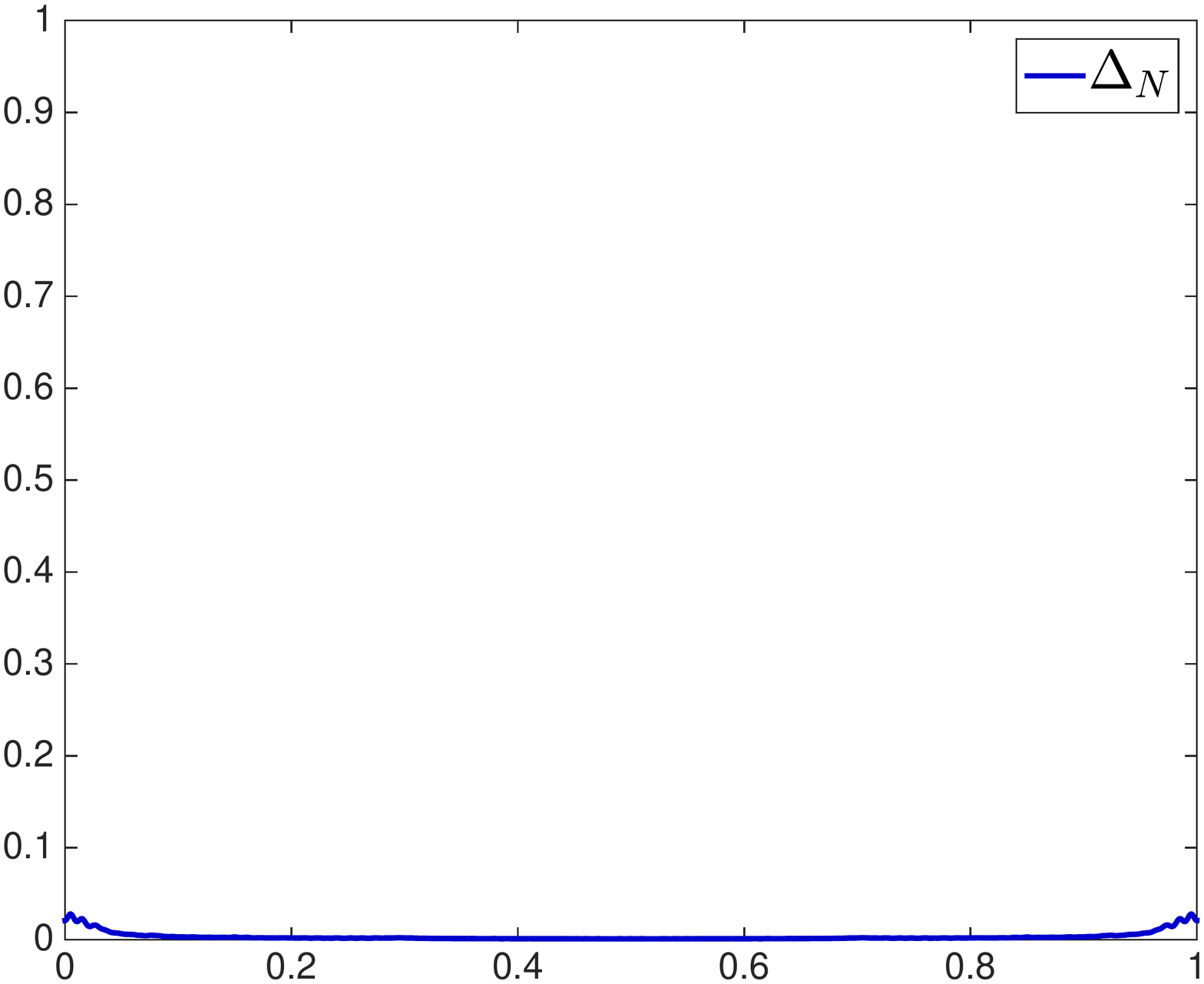}}

\put(79,0){\footnotesize $\theta$}
\put(229,0){\footnotesize $\theta$}
\put(379,0){\footnotesize $\theta$}

\put(1,270){\footnotesize \rotatebox{90}{Observable $x_1$}}
\put(1,150){\footnotesize \rotatebox{90}{Observable $x_2$}}
\put(1,30){\footnotesize \rotatebox{90}{Observable $x_3$}}

\end{picture}
\caption{\figCaptionSize Lorenz system -- $N = 100$. Left: density approximation with the CD kernel. Middle: distribution function approximation. Right: Singularity indicator $\Delta_N$ defined in~(\ref{eq:singul_indicator}).}
\label{fig:lorenz_x1x3x3}
\end{figure*}

\begin{figure*}[!th]
\begin{picture}(140,120)

\put(0,0){\includegraphics[width=55mm]{./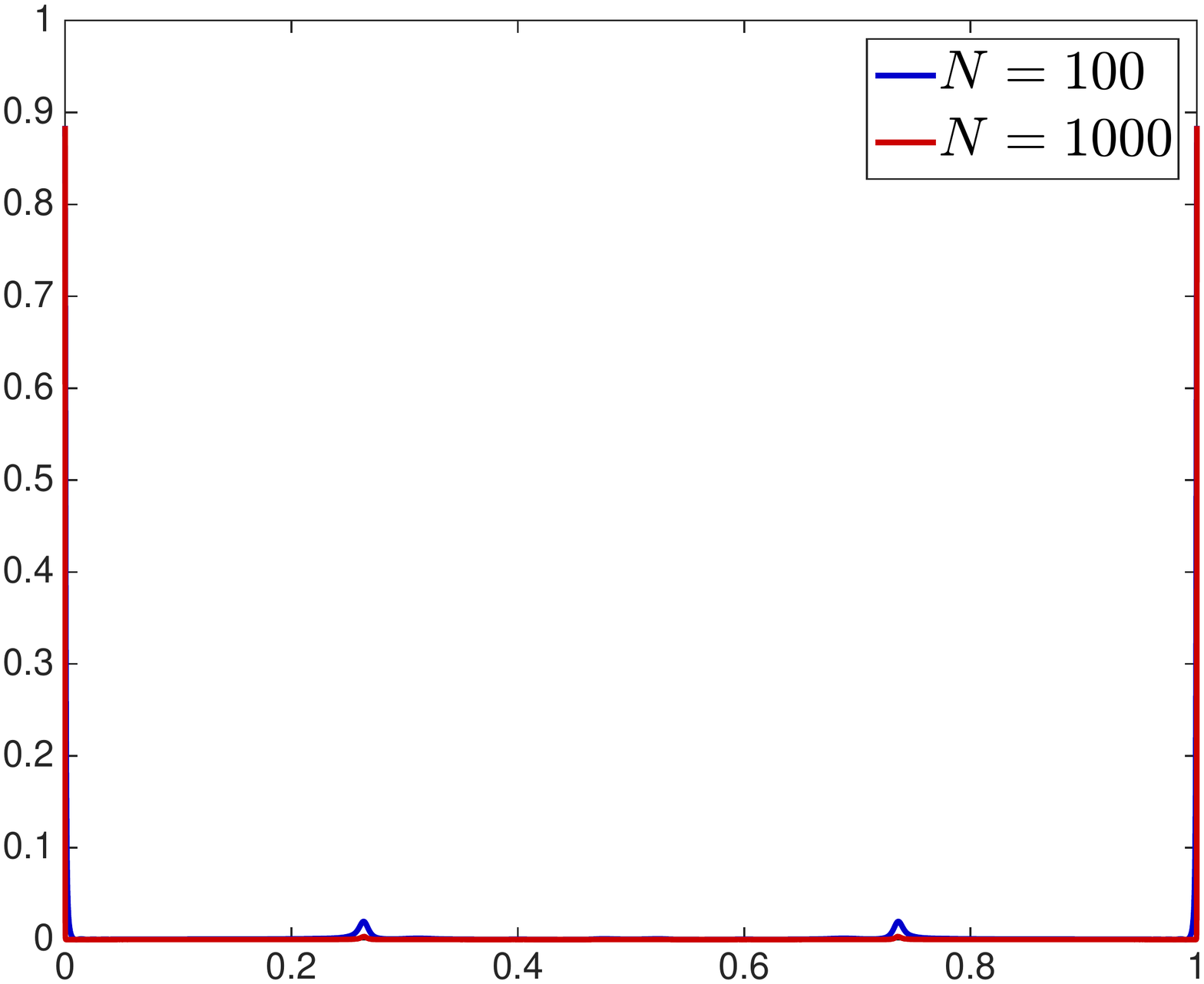}}
\put(150,0){\includegraphics[width=55mm]{./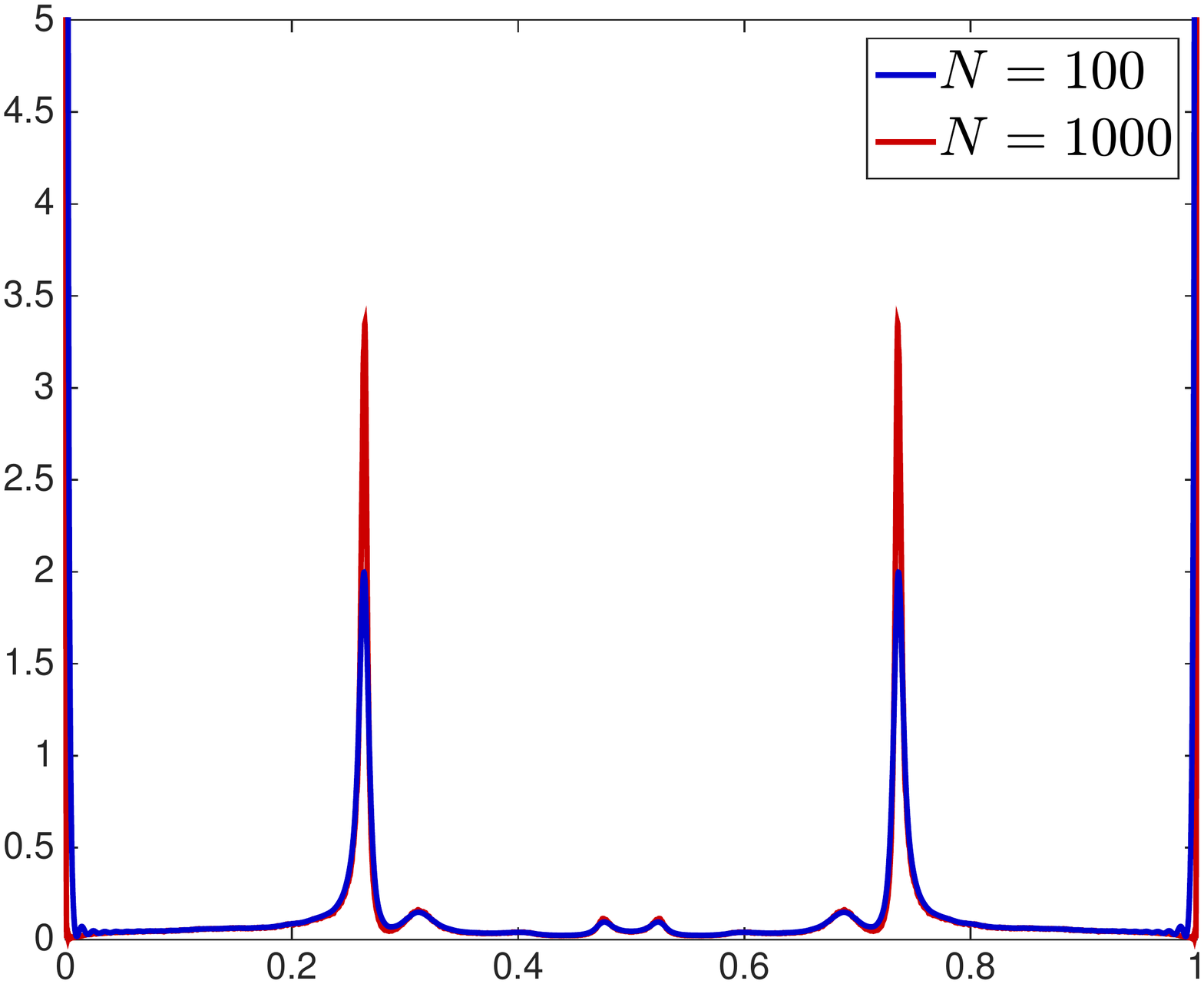}}
\put(300,0){\includegraphics[width=55mm]{./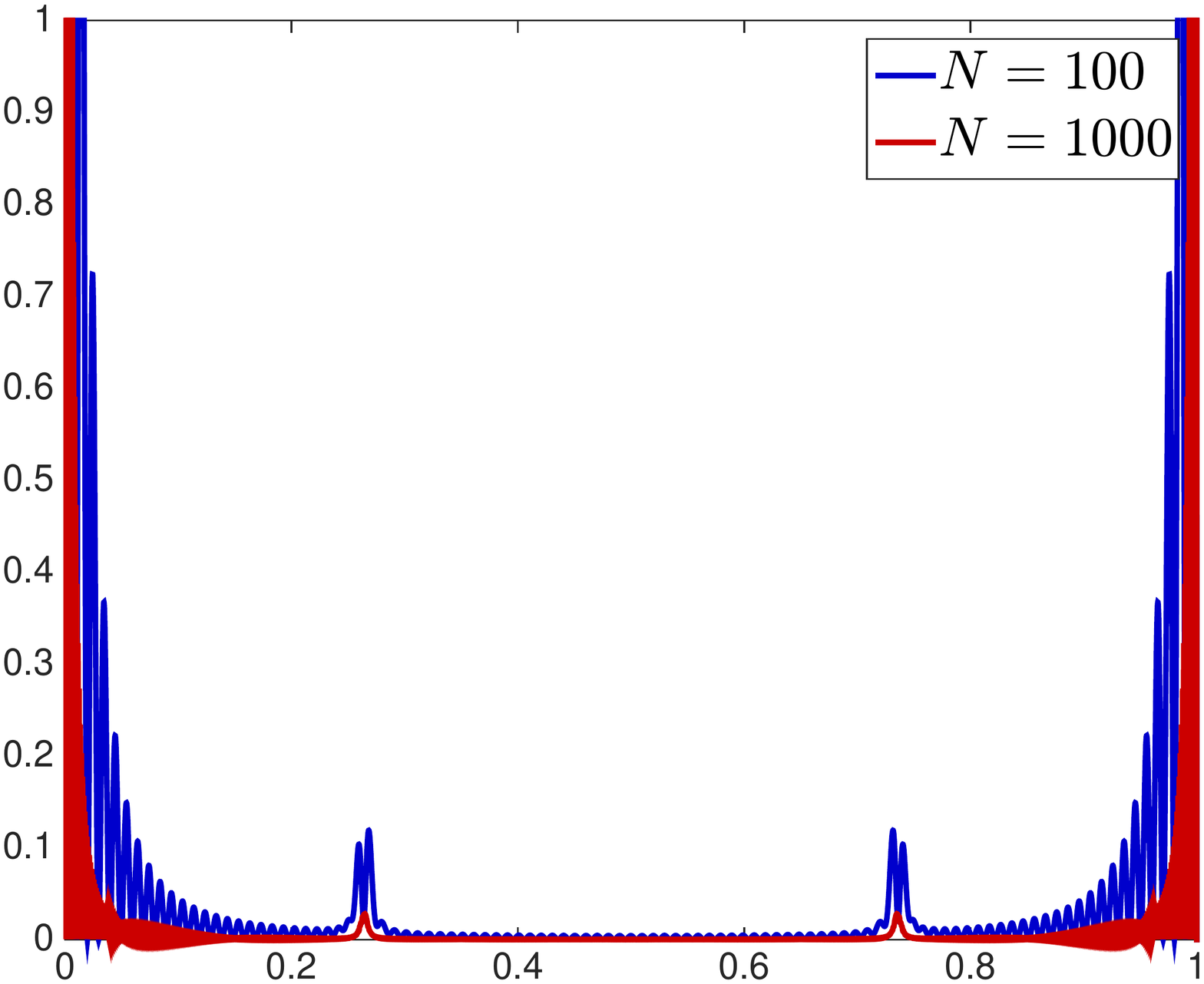}}

\put(25,25){\includegraphics[width=31mm]{./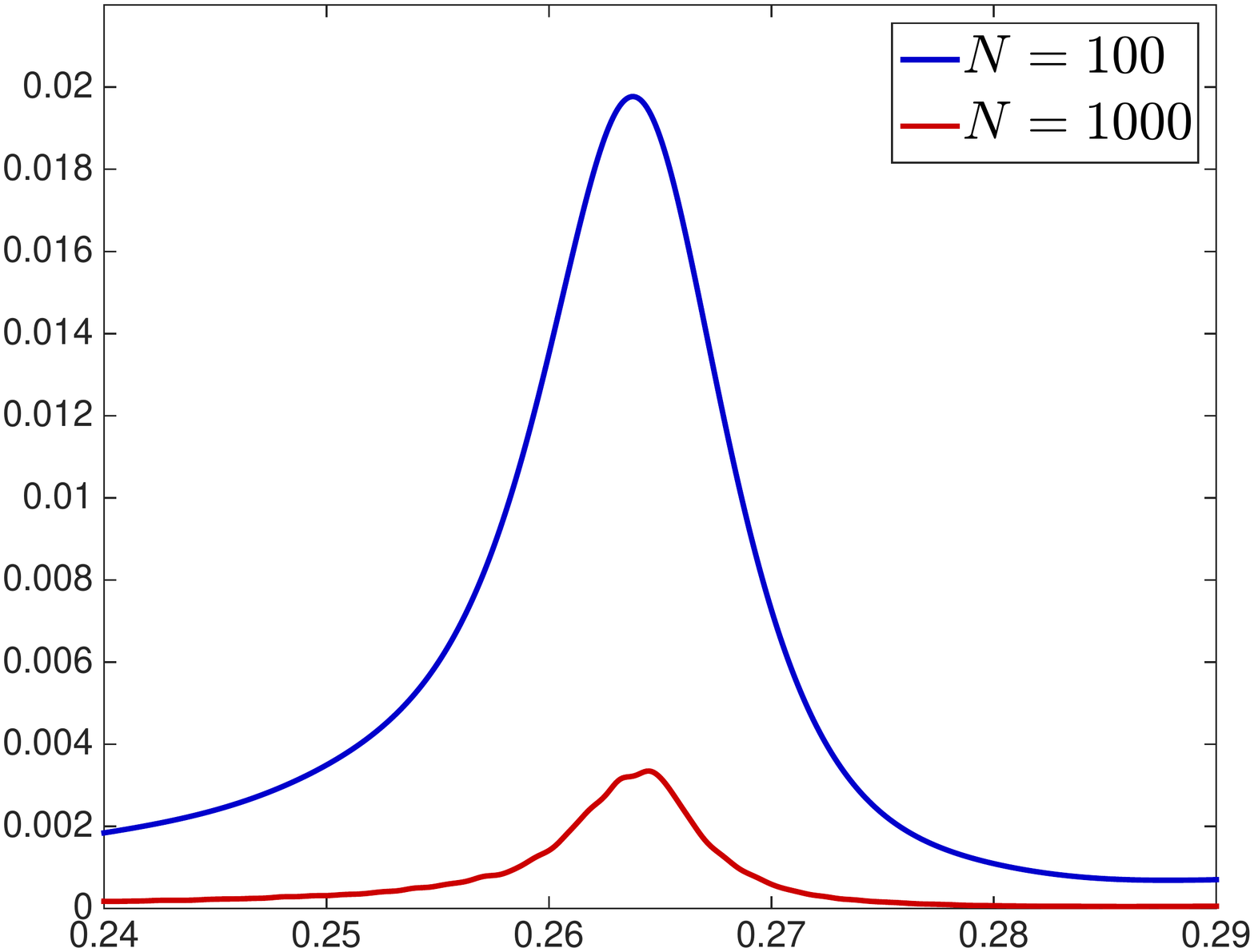}}
\put(325,25){\includegraphics[width=31mm]{./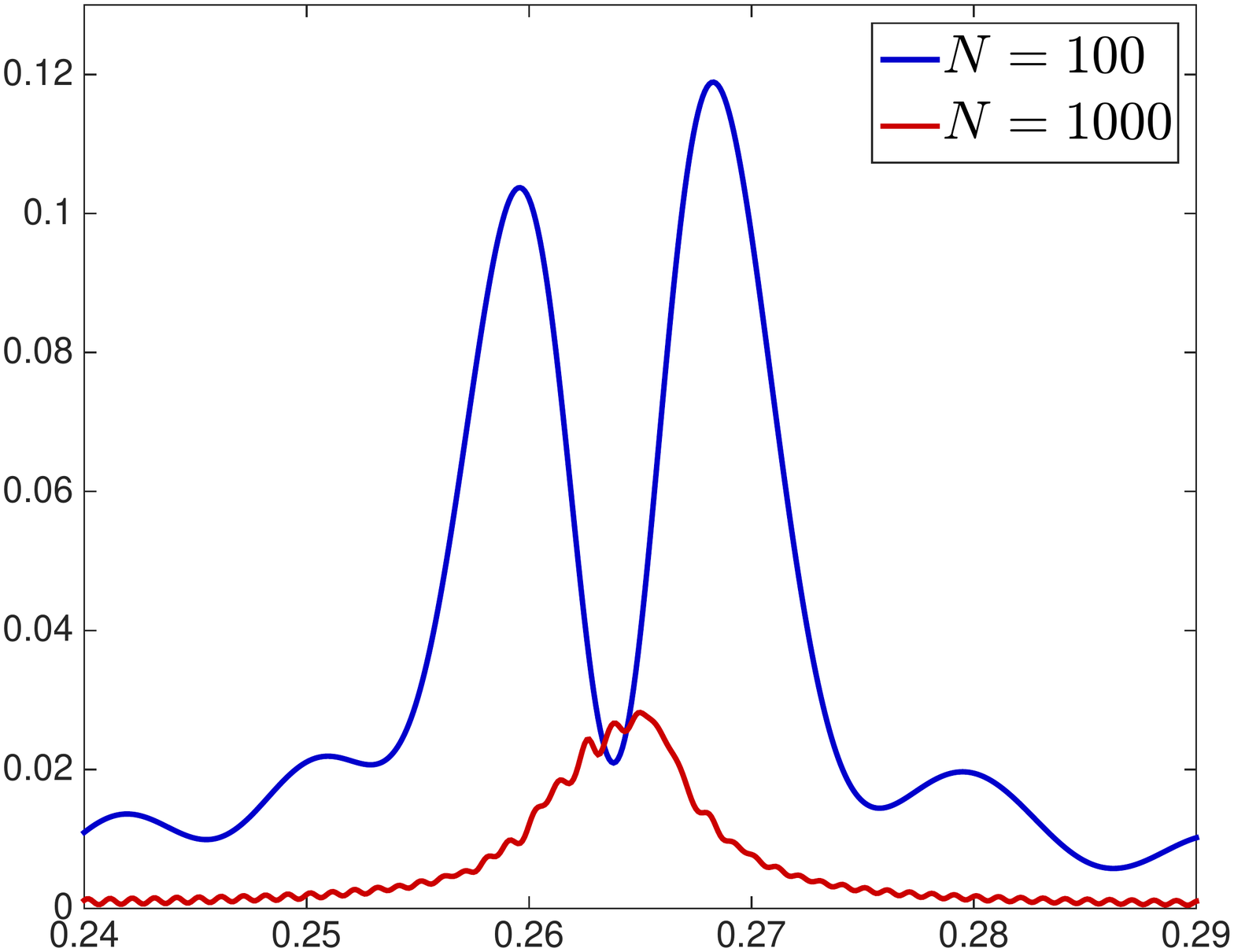}}
\put(70, 30){\vector(-1, -1){15}}
\put(370, 30){\vector(-1, -1){14}}

\put(79,0){\footnotesize $\theta$}
\put(229,0){\footnotesize $\theta$}
\put(379,0){\footnotesize $\theta$}

\end{picture}
\caption{\figCaptionSize Lorenz system -- Observable $f(x) = x_3$. Left: approximation of the atomic part $\zeta_N / (N+1)$. Middle: approximation of the density $\zeta_N$. Right: singularity indicator $\Delta_N$.}
\label{fig:lorenz:twoDiffN}
\end{figure*}

\begin{figure*}[th]
\begin{picture}(140,195)
\put(10,0){\includegraphics[width=90mm]{./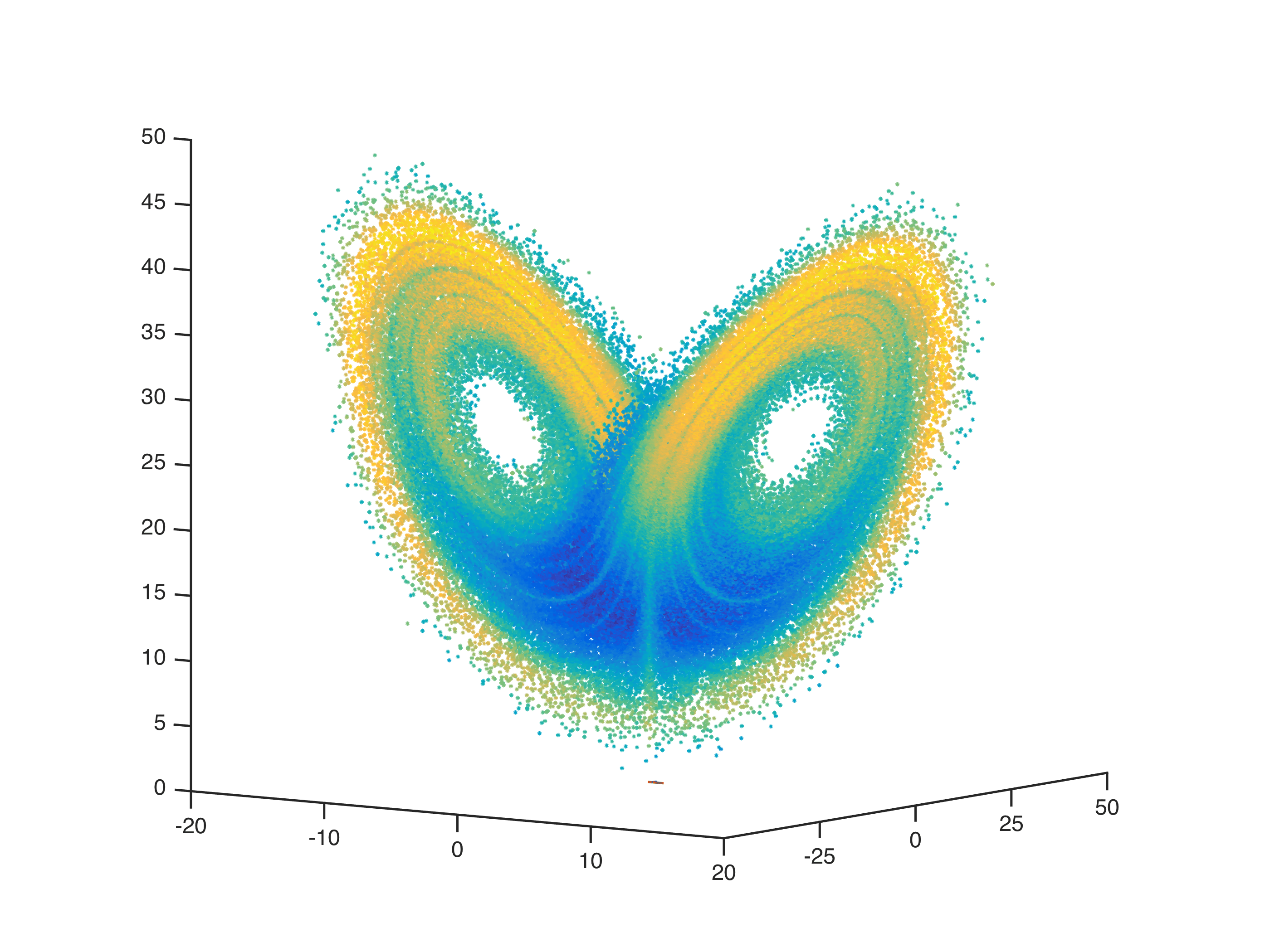}}
\put(225,0){\includegraphics[width=90mm]{./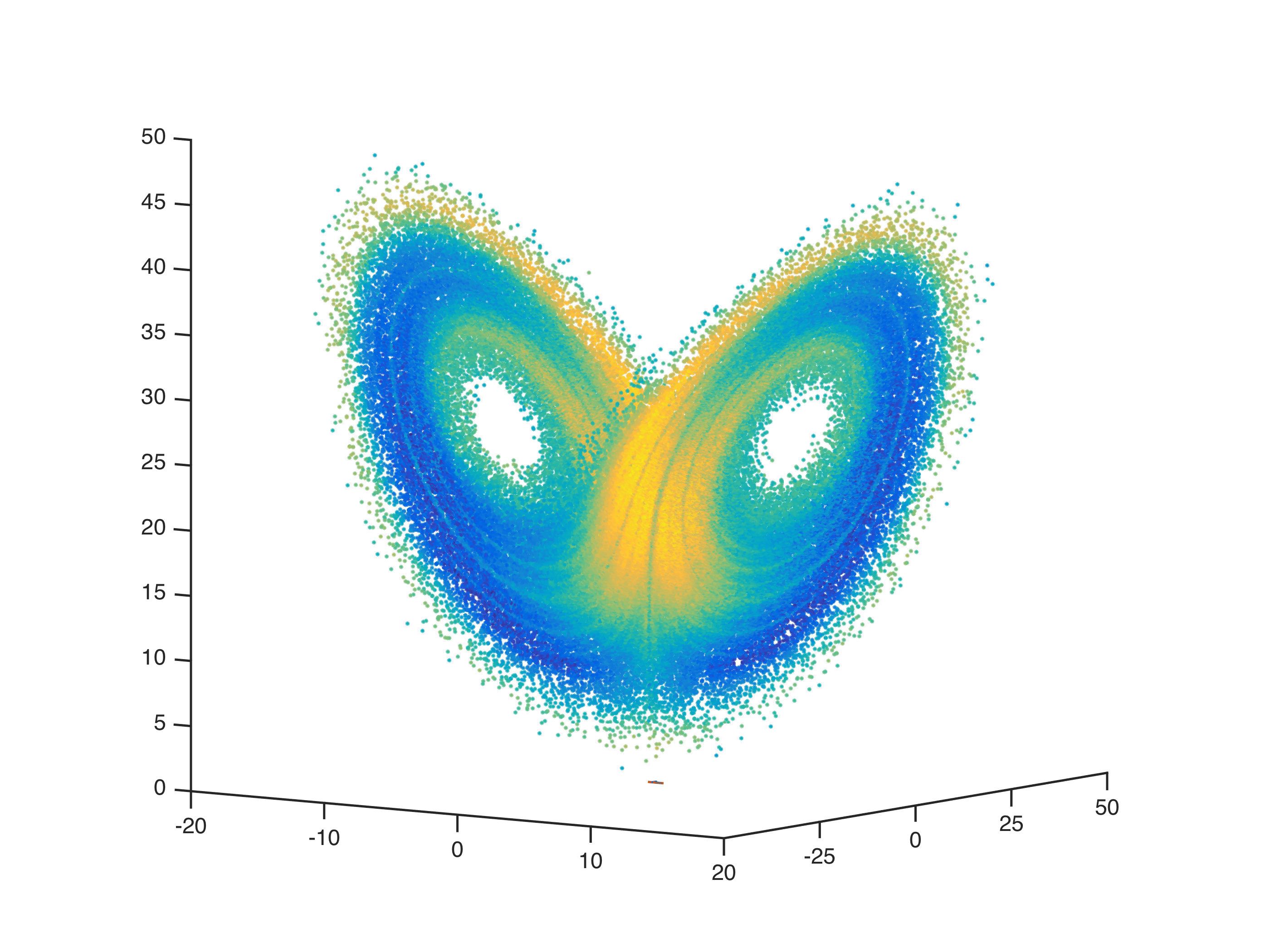}}

\put(92,7){\small $x_1$}
\put(307,7){\small $x_1$}

\put(184,10){\small $x_2$}
\put(405,10){\small $x_2$}

\put(22,90){\small $x_3$}
\put(235,90){\small $x_3$}
\put(118,170){\small Real part}
\put(323,170){\small Imaginary part}


\end{picture}
\caption{\figCaptionSize Lorenz system -- Approximation of the spectral projection $P_{[a,b)}f$ with $f(x) = x_3$ and $[a,b] = [0.24,0.28]$ and $N=100$, $M = 10^5$.}
\label{fig:lorenz_projection}
\end{figure*}

\subsection{Lorenz system}
Our second example investigates the Lorenz system.
\[
\begin{array}{llll}
\dot{x}_1 & = &  10(x_2-x_1) \\ 
\dot{x}_2 & = & x_1(28-x_3) - x_2 \\
\dot{x}_3 & = & x_1x_2 - \frac{8}{3}x_3.
\end{array}
\]
This is a continuous time system for which we define the Koopman semigroup $(U_t f)(x) = (f\circ \varphi_t)(x) $, where $\varphi_t$ is the flow of the dynamical system. The data is collected in the following way: We simulate one trajectory of length $MT_s$, where $T_s = 0.2$ is the sampling period and $M$ is the number of samples to be taken. Given an observable $f$, the data~(\ref{eq:data}) used for approximate moment computation using~(\ref{eq:moment:approx}) is then given by equidistant sampling, i.e., $y_j = f(jT_s)$. Therefore, our results on spectrum approximation pertain to $U_{T_s}$ (i.e., to one element of the Koopman semigroup). Figure~\ref{fig:lorenz_x1x3x3} shows the results for three different observables, $f_1 = x_1$, $f_2 = x_2$ and $f_3 = x_3$ with $N = 100$. For the first two observables we see purely absolutely continuous spectrum, as testified by the agreement of $F_{\zeta_N}$ with the remaining two estimates of the distribution function as well as by the singularity indicator $\Delta_N$ being very small. The observable $f = x_3$, on the other hand, has an atom at $\theta = 0$ as well as a peak of the density estimate at approximately $\theta = 0.26$, corresponding to the continuous time frequency $\omega = 2\pi \theta / T_s \approx 8.17\,\mr{rad}/\mr{s}$. The singularity indicator $\Delta_N$ suggests there may be a small singularity around this location. To investigate this further we compare in Figure~\ref{fig:lorenz:twoDiffN} the plots of the estimates of the atomic part $\zeta_N/(N+1)$ as well as the density estimate $\zeta_N$ and the singularity indicator $\Delta_N$ for $N= 100$ and $1000$. First we notice that the estimate of the atomic part decreases  around $\theta = 0.26$ as $N$ increases, which suggests that the peak around this point is not an atom. We also observe a decrease of the singularity indicator $\Delta_N$ expect for a very small neighborhood of the peak. This suggests that the peak is either purely absolutely continuous or that there may be a very small singular continuous contribution. This is in agreement with the fact that the Lorenz system is mixing (see \cite{luzzatto2005lorenz}) and hence there are no non-trivial eigenvalues of the Koopman operator. We believe that the peak is associated with an almost-periodic motion of the $x_3$ component during the time that the state resides in either of the two lobes, with switches between the lobes occurring in a chaotic manner. In Figure~\ref{fig:lorenz_projection} we depict the approximation of the spectral projection  $P_{[a,b)}f$ (see Section~\ref{sec:projComp}) with $[a,b) = [0.24,0.28]$  and $f(x) = x_3$, i.e., we are projecting on a small interval around the peak in the spectrum of $x_3$. This function will evolve almost linearly with frequency of the peak, i.e., $(P_{[a,b)}f)(x(t + \tau))\approx e^{i\omega\tau }(P_{[a,b)}f)(x(t)) $ with $\omega \approx 8.17 \,\mr{rad}/\mr{s}$.

\subsection{Cavity flow}
In this example we study the 2-D model of a lid-driven cavity flow; see~\cite{arbabi2017study} for a detailed description of the example and the data generating process. As in \cite{arbabi2017study}, the goal is to document the changes in the spectrum of the Koopman operator with increasing Reynolds number which are manifestations of the underlying bifurcations, going from periodic through quasi-periodic to fully chaotic behavior. For each Reynolds number,  the data available to us is in the form of the so called stream function of the flow evaluated on a uniform grid of points in the 2-D domain with equidistant temporal sampling. This leaves us with a very large choice of observables since the value of the stream function at any of the grid points (as well as any nonlinear function of the values of the stream function) is a candidate observable. In general, one wishes to choose the observable $f$ such that its spectral content is as rich as possible, preferably such that $f$ is $\ast$-cyclic (see Eq~(\ref{eq:cyclic})), which is, however difficult to test numerically. For example, for $Re = 13\cdot 10^3$, exhibit periodic behavior with a single (or very dominant) harmonic component and hence might not contain the full spectral content of the operator (i.e., $f$ is not $\ast$-cyclic). Therefore, for each value of the Reynolds number we chose as the observable the stream function at a grid point where the time evolution is complex and hence the spectral content of this observable is likely to be rich. A more careful numerical study, such as the one carried out in~\cite{arbabi2017study}, should analyze a whole range of observables (perhaps the values of the stream function at all grid points). However, here, already one suitably chosen observable allows us to draw interesting conclusions on the behavior of the spectrum of the operator as a function of the Reynolds number. The point spectrum approximation results $\zeta_N/(N+1)$ are depicted in Figure~\ref{fig:cavity_pointspec}. Since the observable $f$ is real, the spectrum is symmetric around the point $\theta = 0.5$ and hence we depict it only for $\theta \in [0,0.5]$; in addition, we change coordinates from $\theta$ to $\omega = 2\pi \theta/T_s$, where $T_s = 0.5\, \mr{s}$ is the sampling period. Finally, in order to better discern very small atoms, we also show the point spectrum approximation on a logarithmic scale. Based on Theorem~\ref{thm:atoms}, whether or not there is an atom at a given frequency $\omega$, can be assessed based on the proximity of the values of $\zeta_{N}/(N+1)$ for two different $N$: When there is an atom, we expect the two values to be closed to each other; otherwise we expect the value of $\zeta_{N}/(N+1)$ to be significantly smaller since in that case $\zeta_{N}/(N+1) \to 0$. Figure~\ref{fig:cavity_pointspec} suggests that there is a very strong atomic component of the spectrum for $Re = 13\cdot 10^3$ and $Re = 16\cdot 10^3$ and even for $Re = 19\cdot 10^3$ as the atomic part accounts for at least $80\,\%$ of the energy of the given observable (i.e., $80\,\%$ of the mass of $\mu_f$). This is confirmed by the approximations of the distribution function which are piecewise constant for these values of the Reynolds number. For $Re = 30\cdot 10^3$, on the other hand, the spectrum appears to be purely continuous. In order to assess whether the spectrum is purely absolutely continuous or has a singular continuous part, we also plot the $F_{\zeta_N}$ which does not entirely coincide with $F_N^{\mr{Q}}$ and hence there may be a singular continuous component of the spectrum present. However, a larger data set  and more observables would have to be investigated in order to ascertain that.


\begin{figure*}[b!]
\begin{picture}(140,500)

\put(0,405){\includegraphics[width=55mm]{./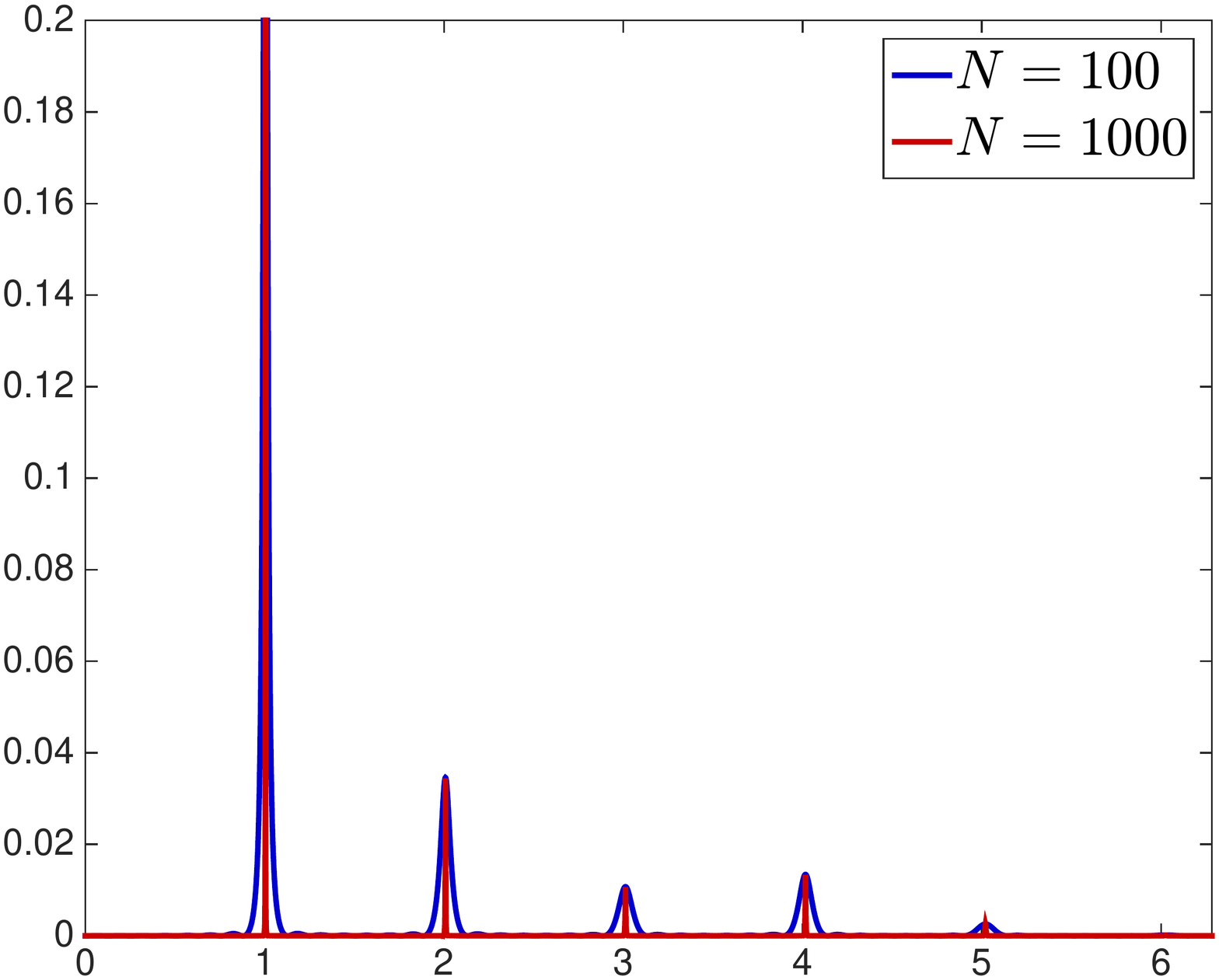}}
\put(150,405){\includegraphics[width=55mm]{./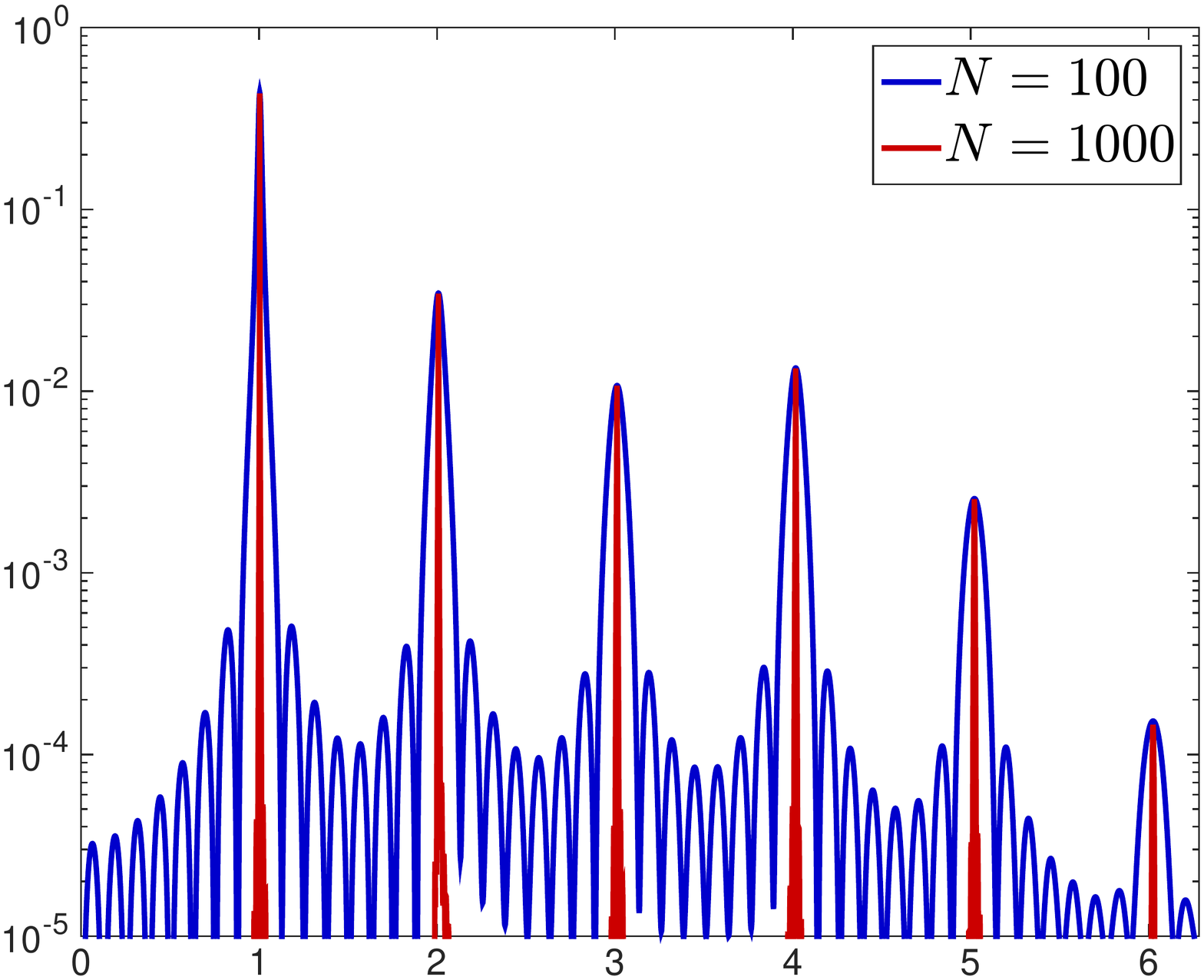}}
\put(300,405){\includegraphics[width=55mm]{./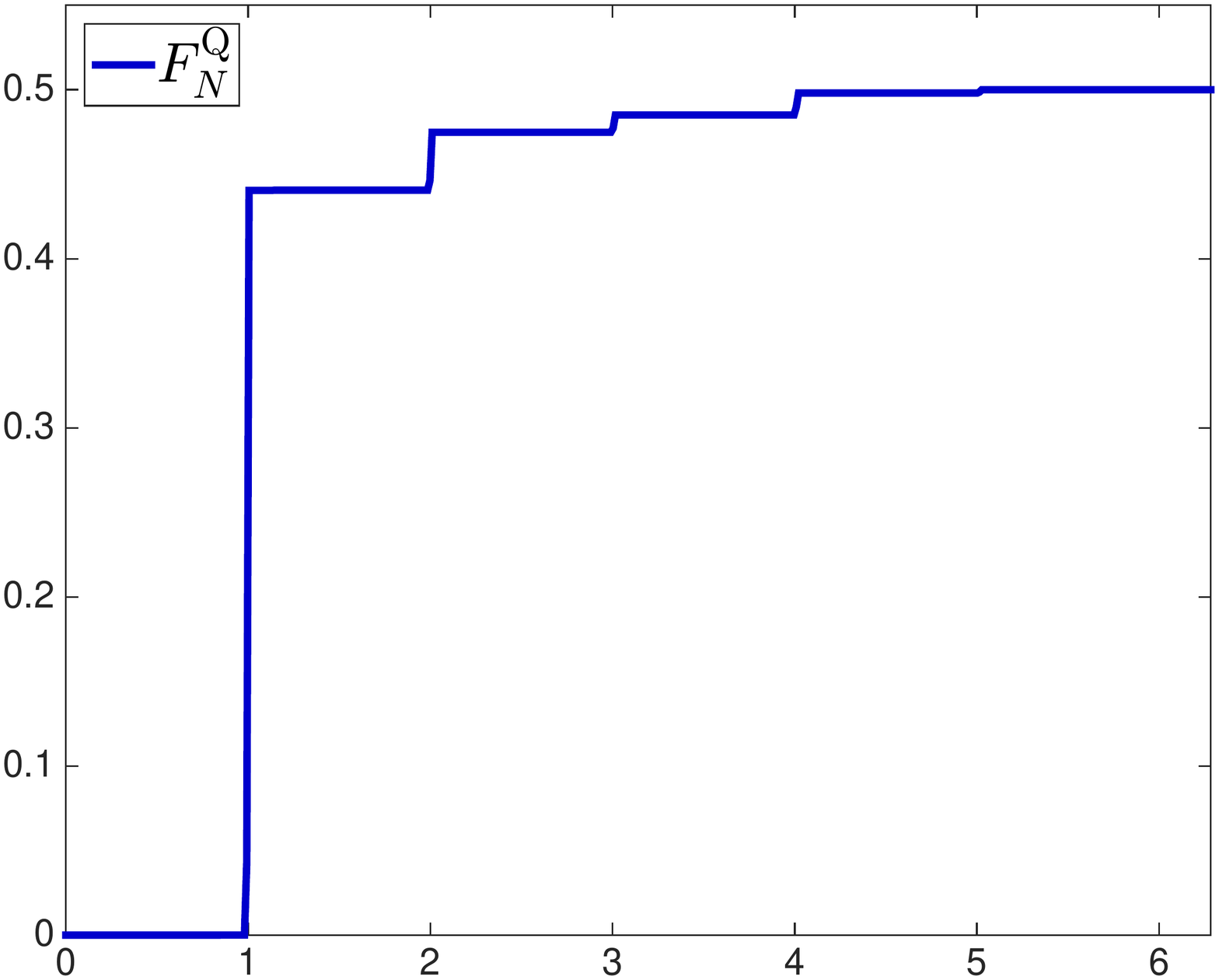}}

\put(0,270){\includegraphics[width=55mm]{./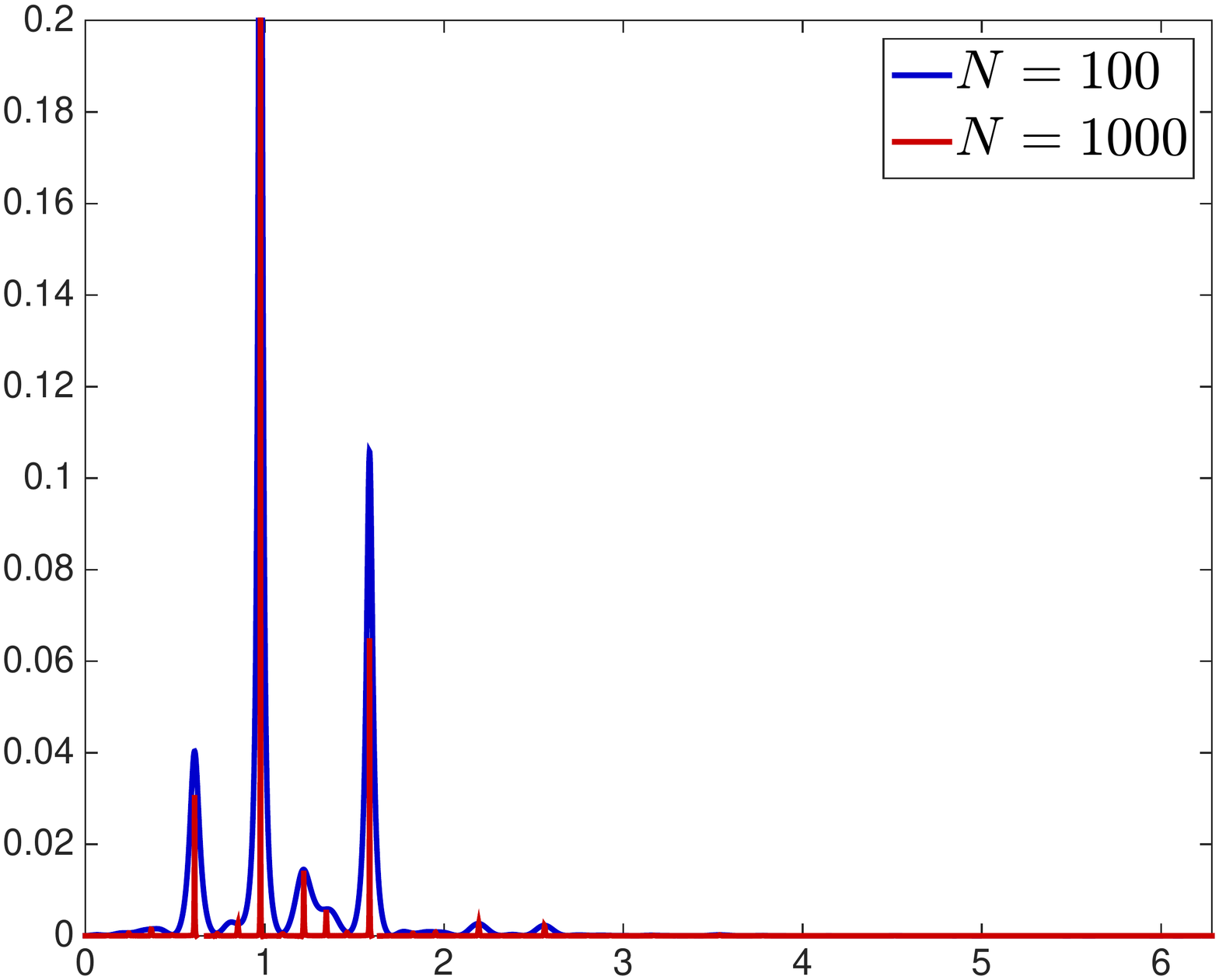}}
\put(150,270){\includegraphics[width=55mm]{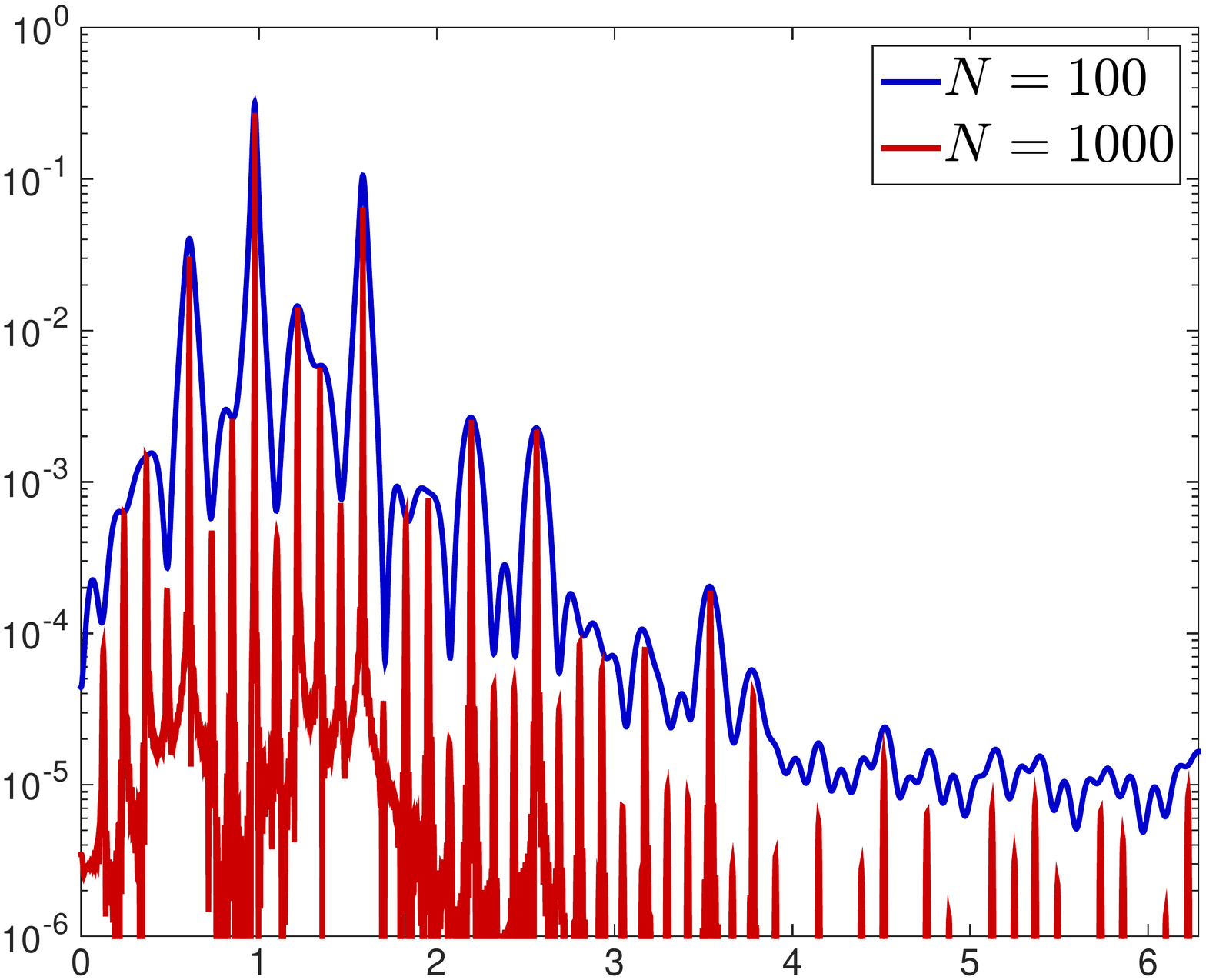}}
\put(300,270){\includegraphics[width=55mm]{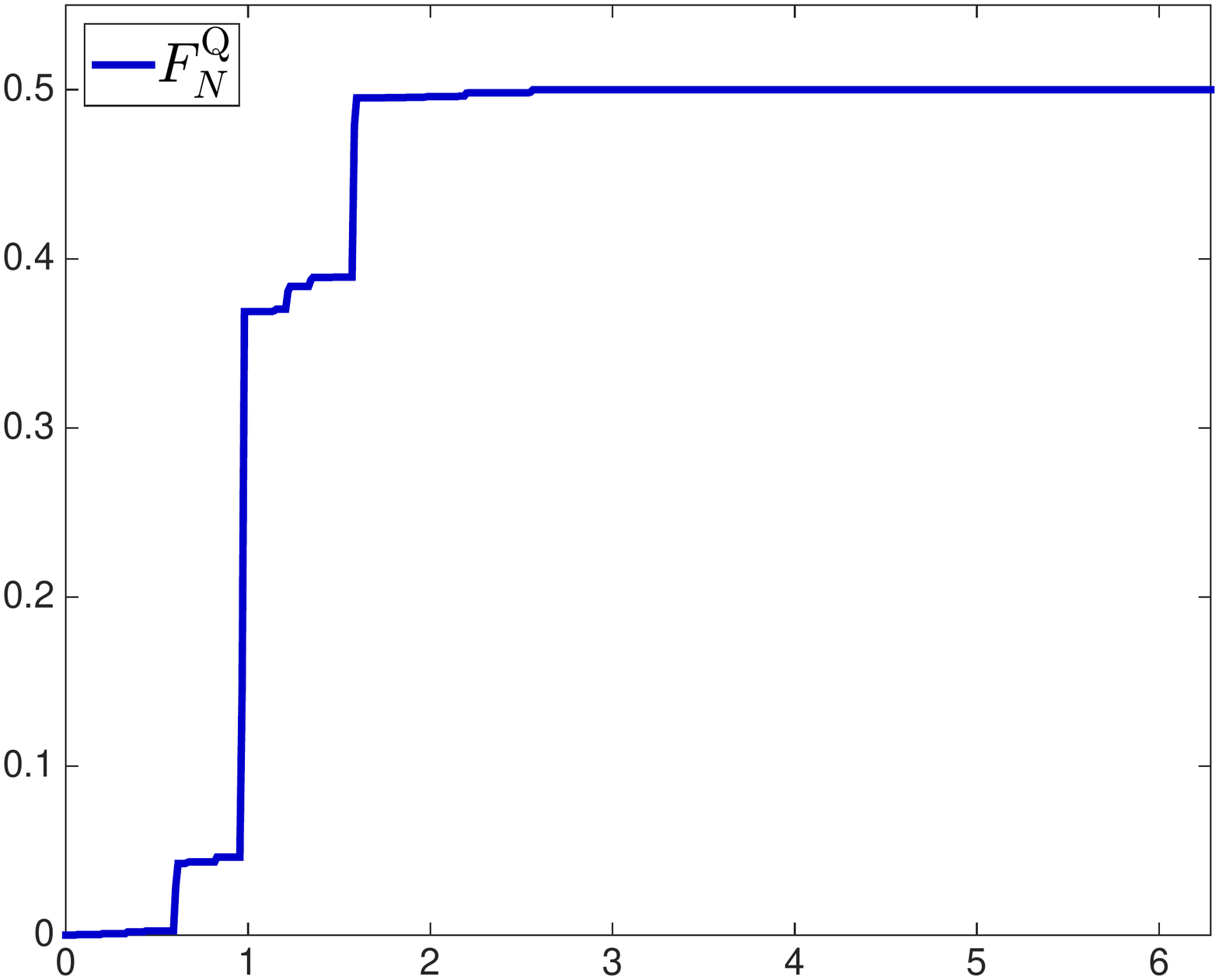}}

\put(57,295){\includegraphics[width=28mm]{./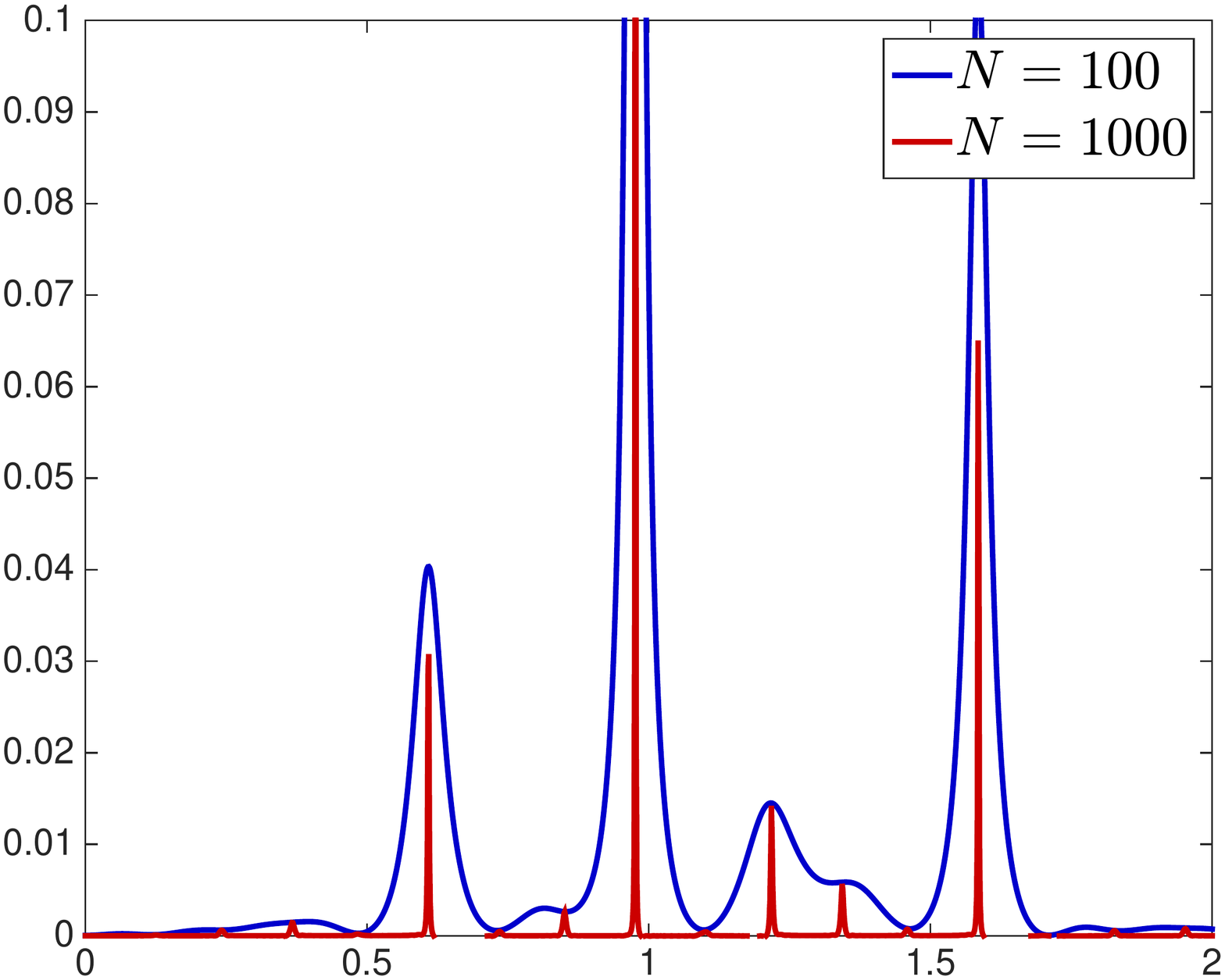}}
\put(65, 305){\vector(-1, -1){11}}

\put(0,135){\includegraphics[width=55mm]{./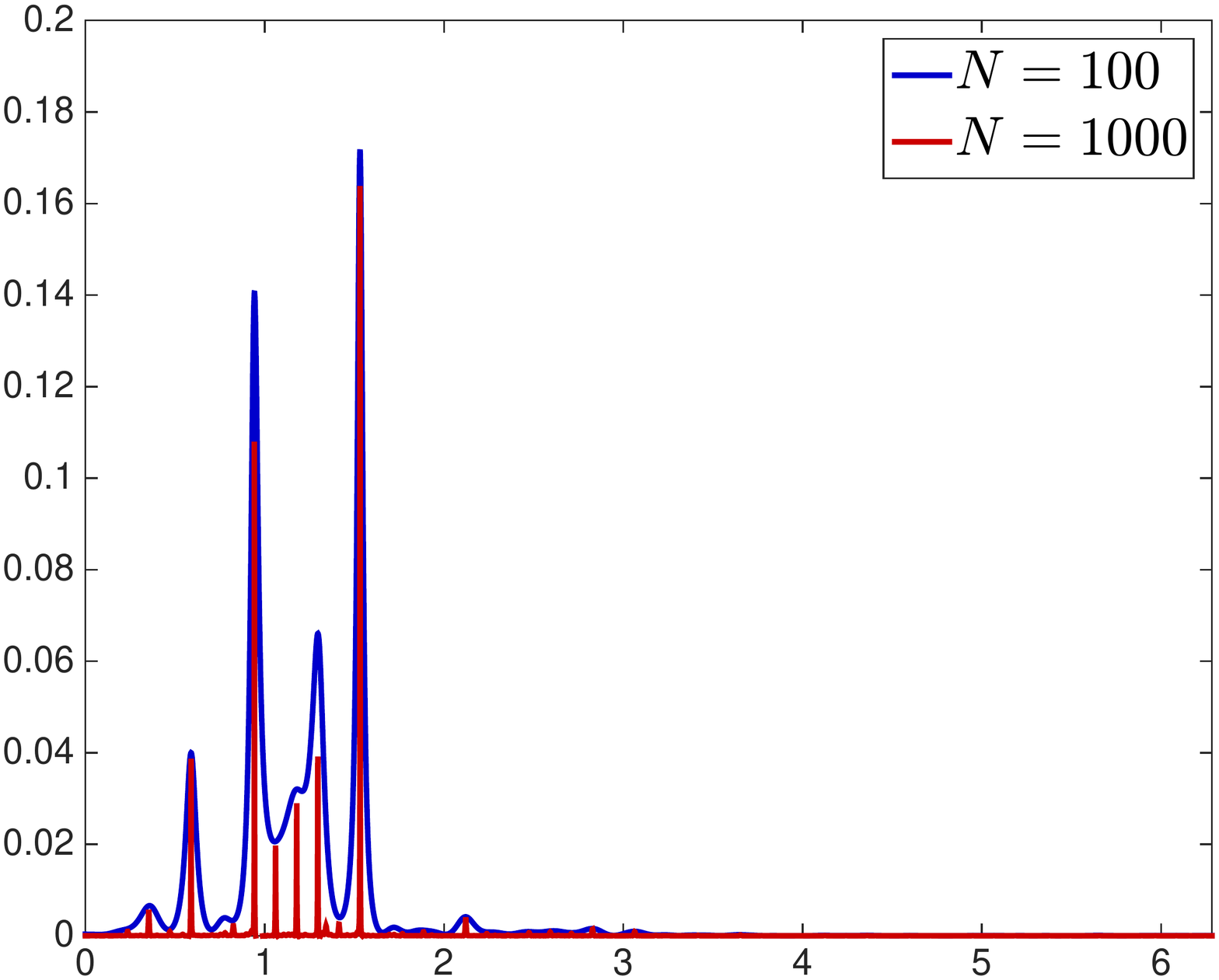}}
\put(150,135){\includegraphics[width=55mm]{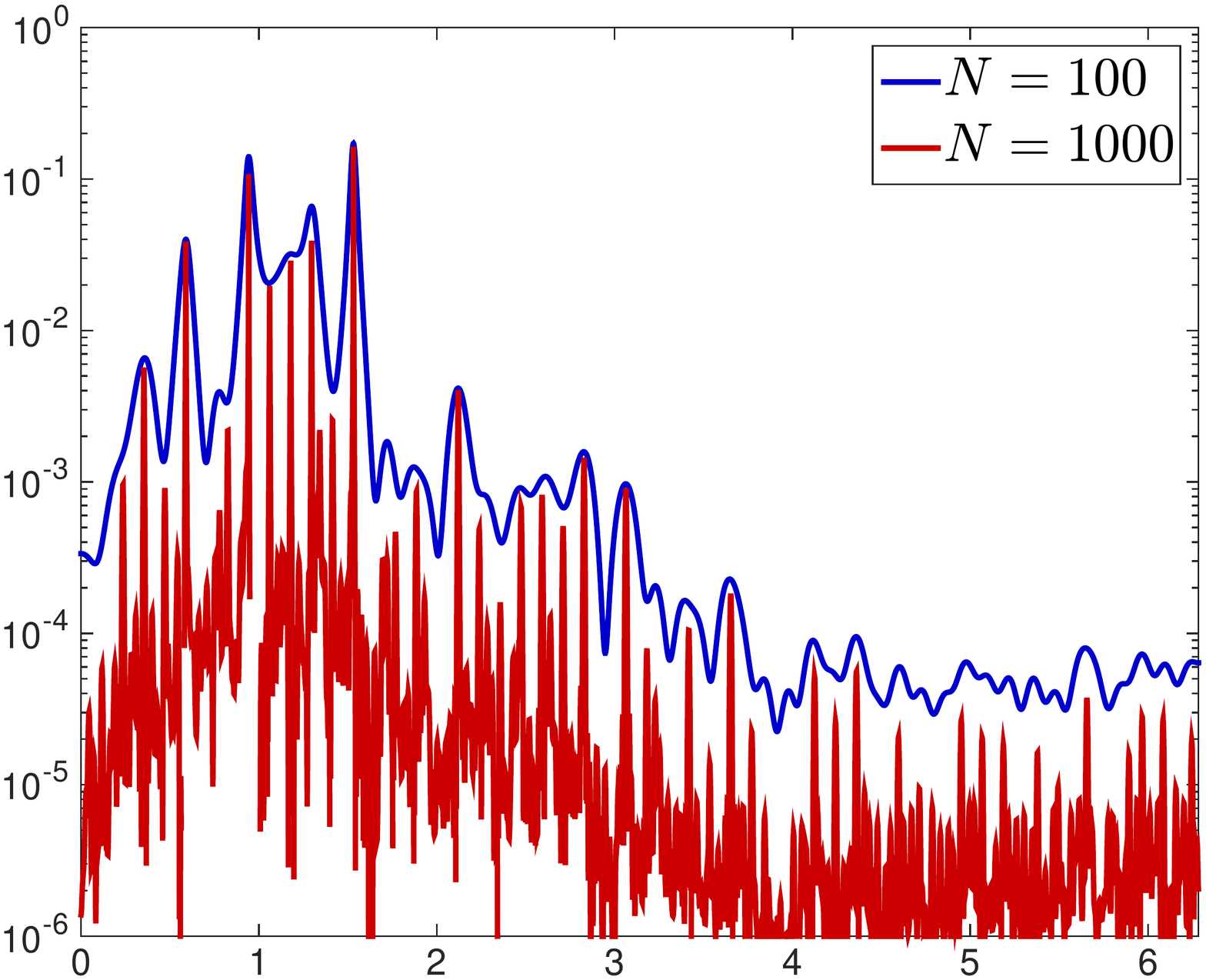}}
\put(300,135){\includegraphics[width=55mm]{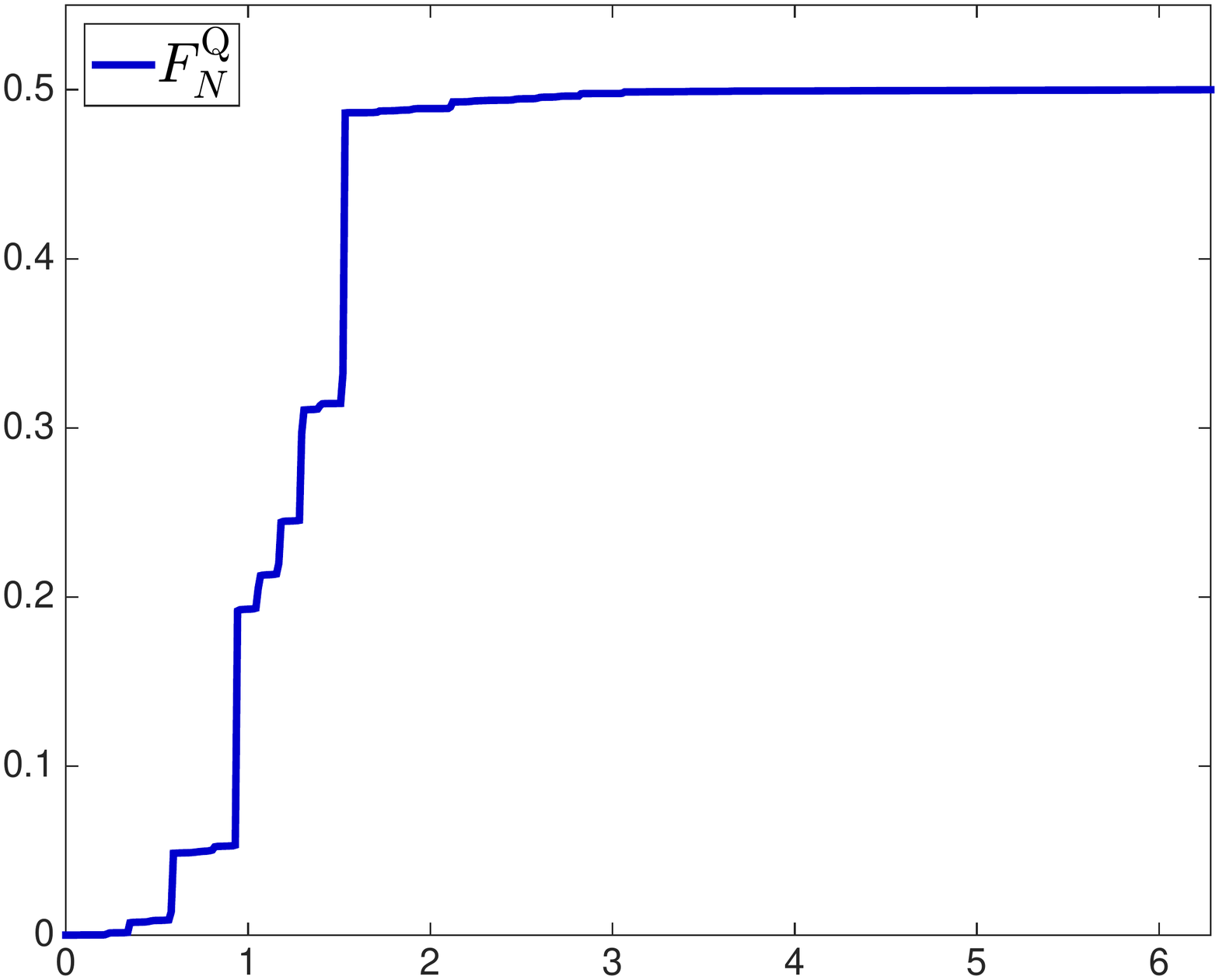}}

\put(57,160){\includegraphics[width=28mm]{./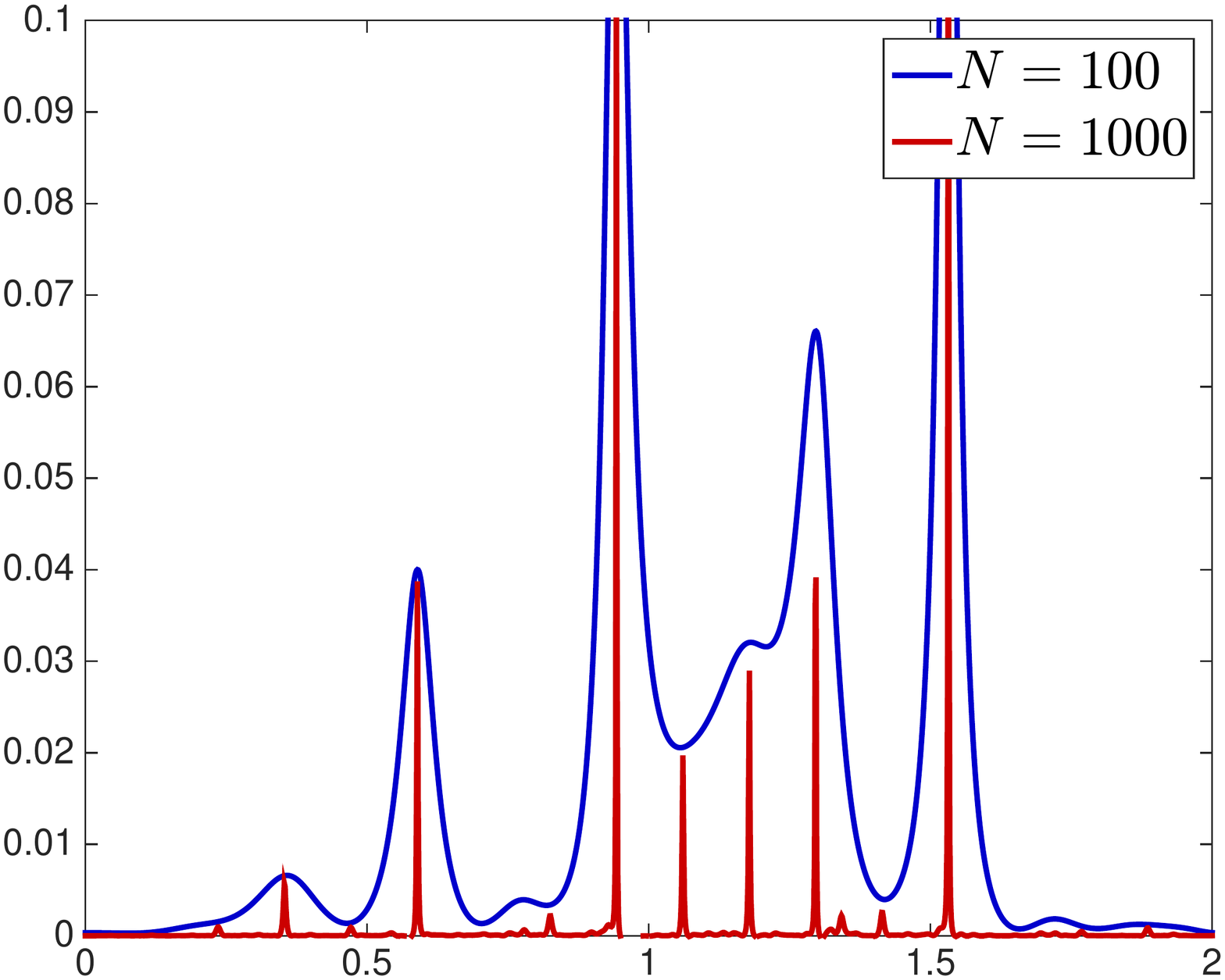}}
\put(65, 170){\vector(-1, -1){11}}

\put(352,150){\includegraphics[width=30mm]{./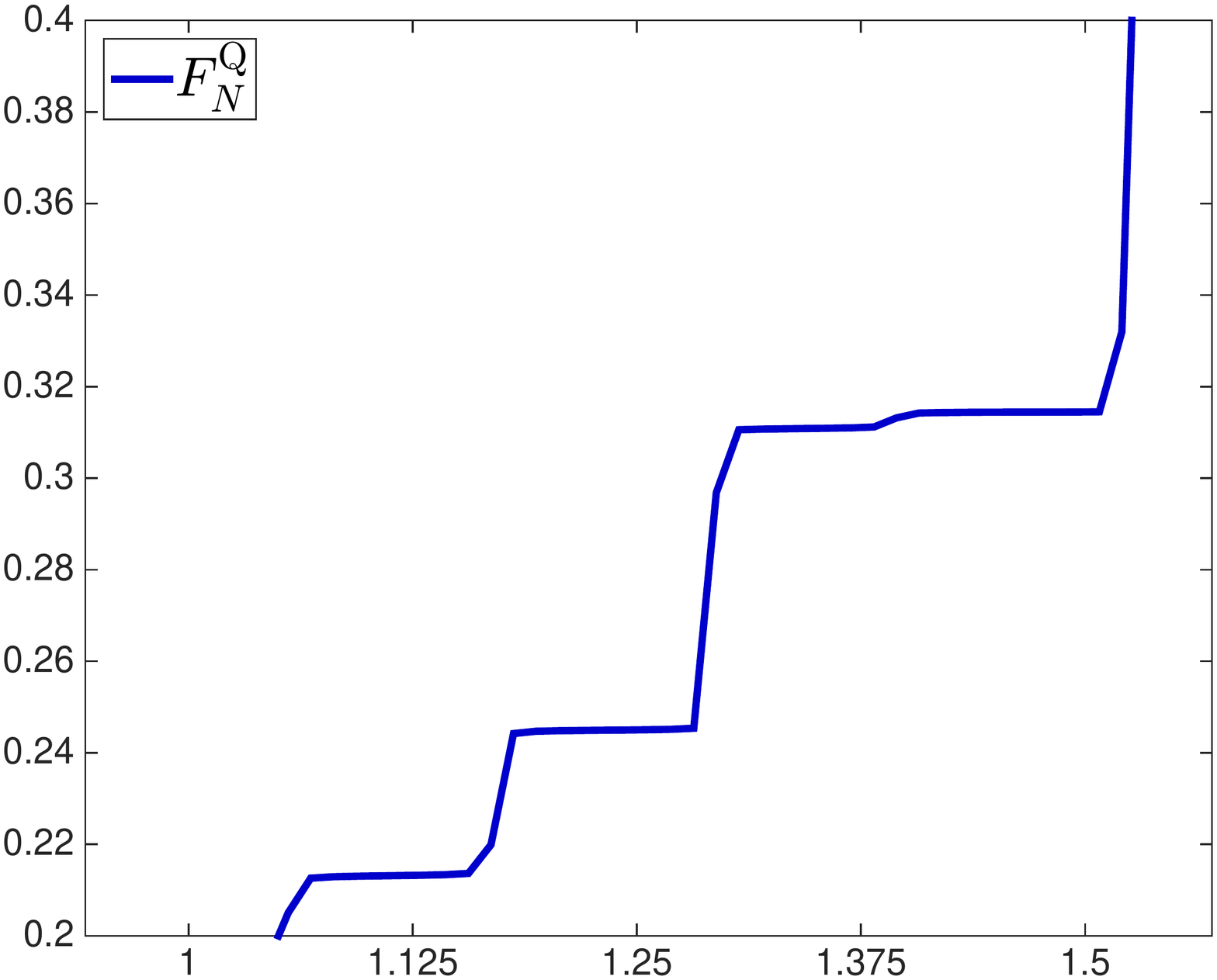}}
\put(360, 180){\vector(-2,1 ){11}}

\put(0,0){\includegraphics[width=55mm]{./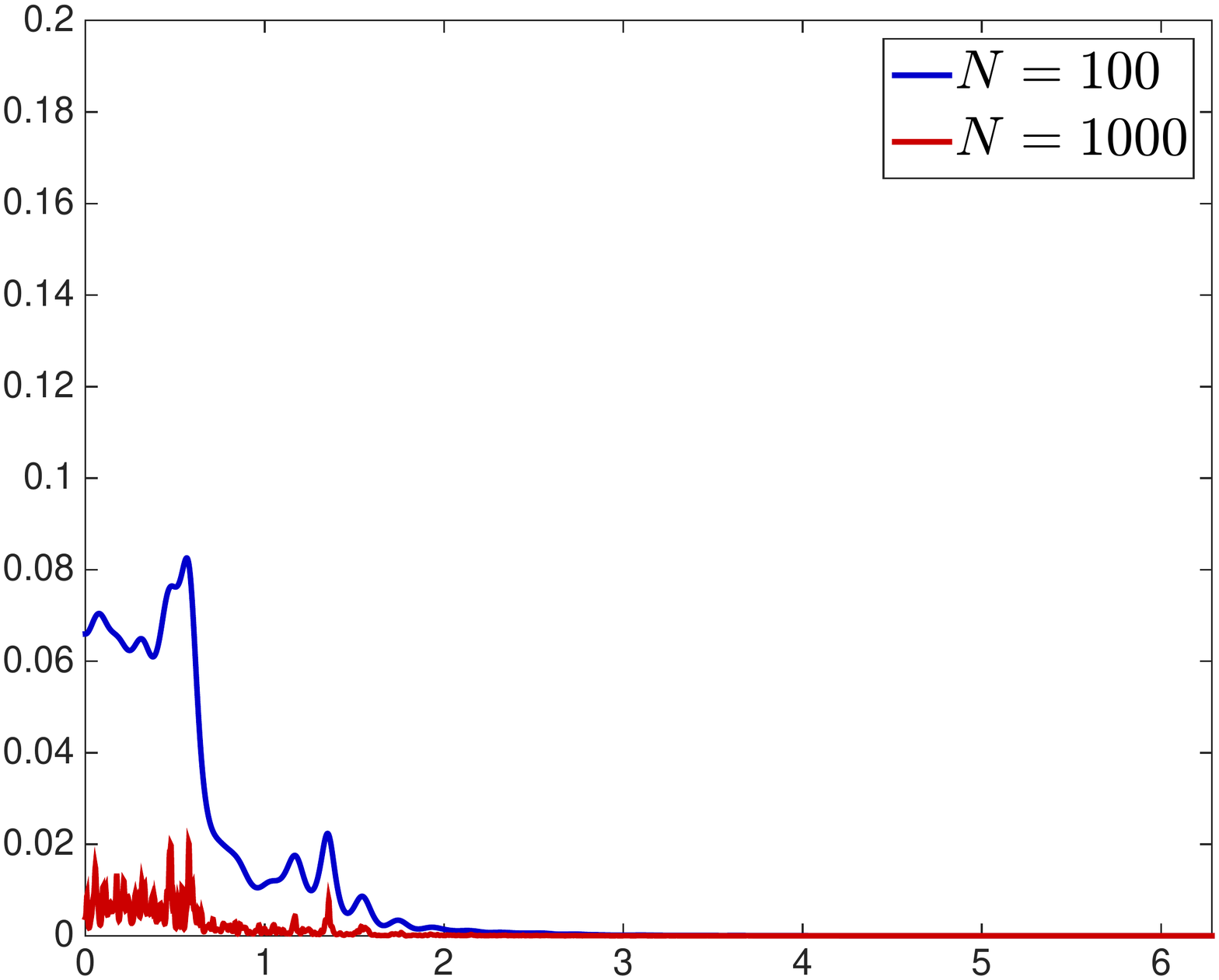}}
\put(150,0){\includegraphics[width=55mm]{./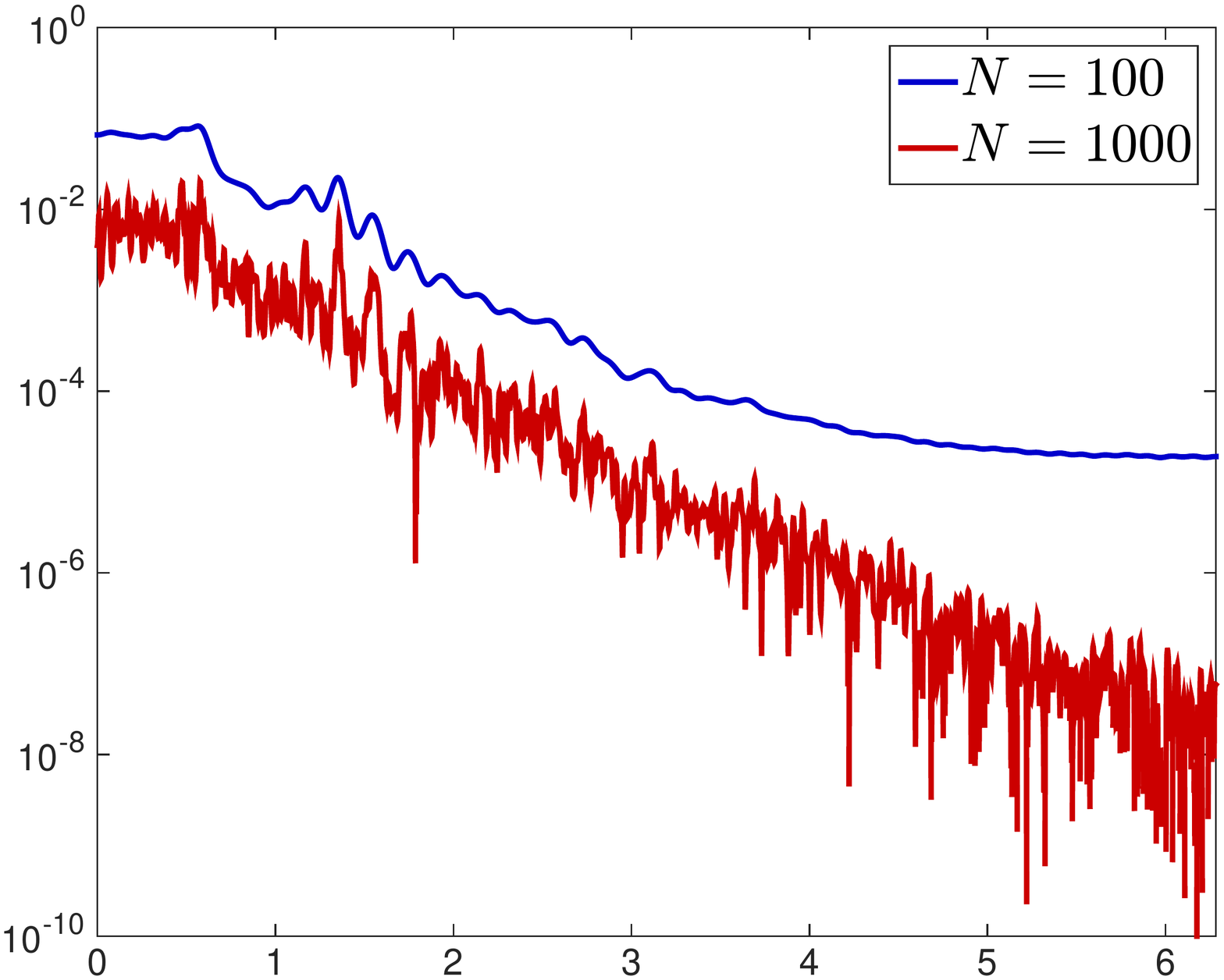}}
\put(300,0){\includegraphics[width=55mm]{./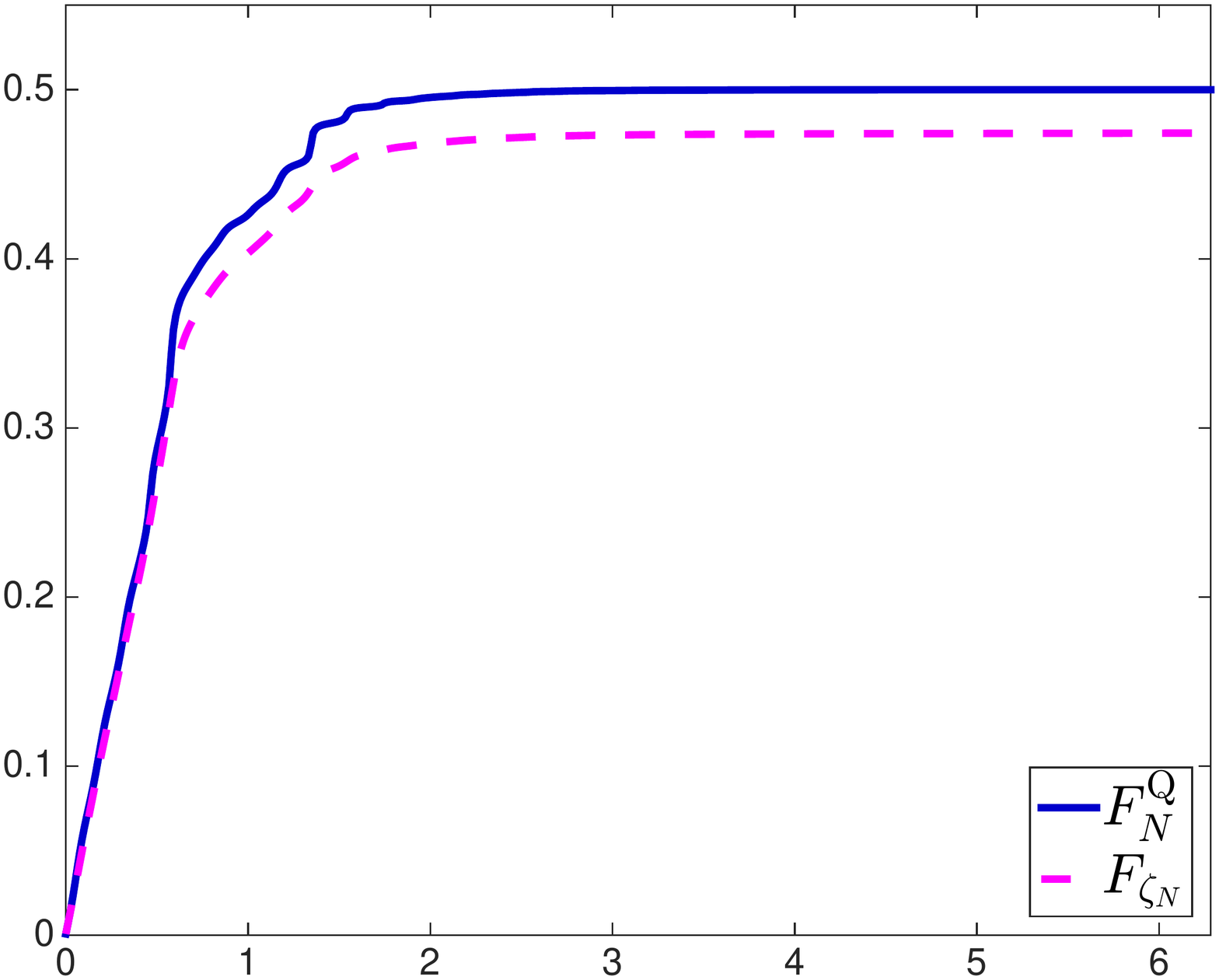}}

\put(57,25){\includegraphics[width=28mm]{./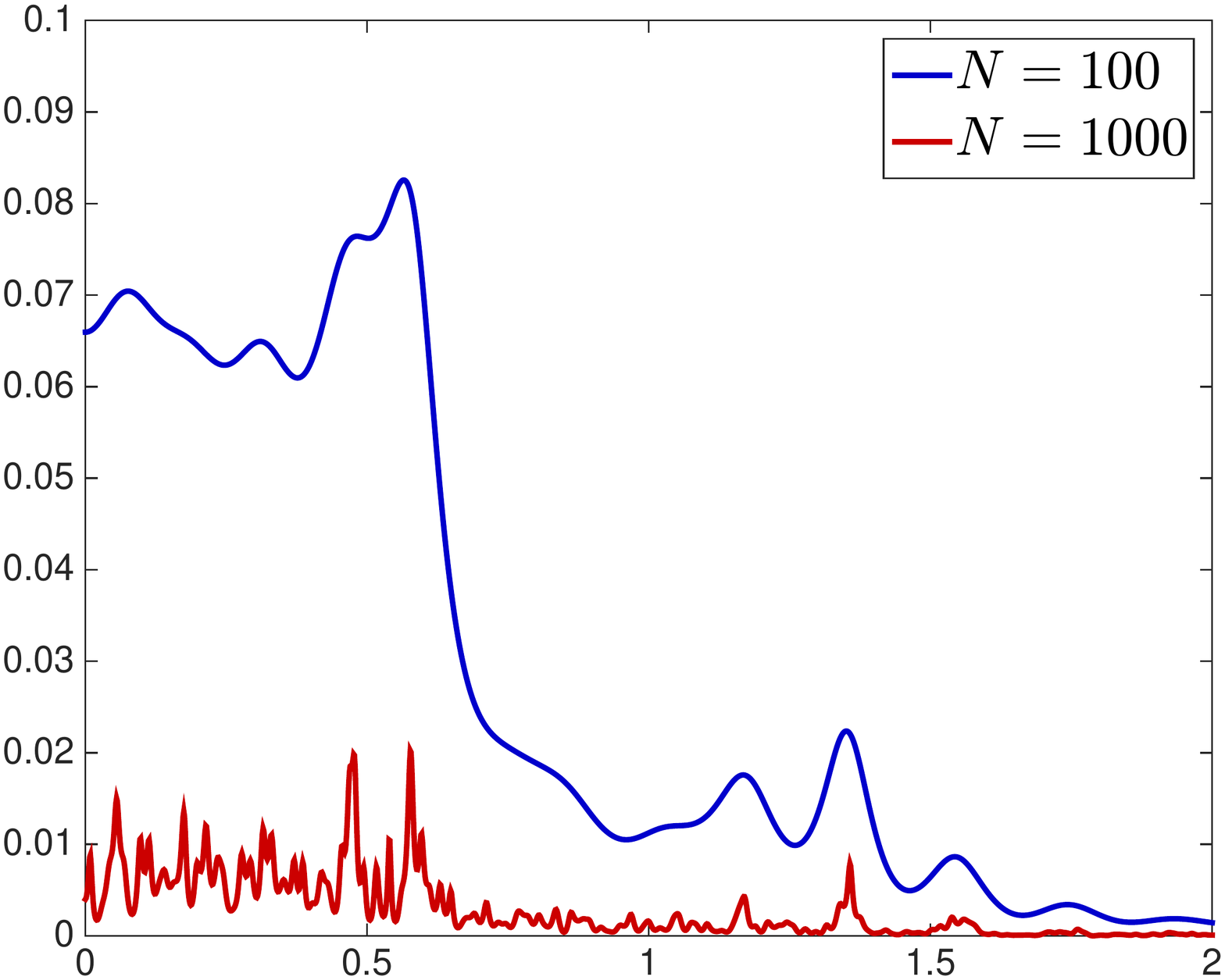}}
\put(58, 46){\vector(-1, -1){20}}

\put(57,0){\footnotesize $\omega\, [\mr{rad}/s]$}
\put(207,0){\footnotesize $\omega\, [\mr{rad}/s]$}
\put(357,0){\footnotesize $\omega\, [\mr{rad}/s]$}

\put(0,445){\footnotesize \rotatebox{90}{$Re = 13\cdot 10^3$}}
\put(0,310){\footnotesize \rotatebox{90}{$Re = 16\cdot 10^3$}}
\put(0,185){\footnotesize \rotatebox{90}{$Re = 19\cdot 10^3$}}
\put(0,35){\footnotesize \rotatebox{90}{$Re = 30\cdot 10^3$}}

\end{picture}
\caption{\figCaptionSize Cavity flow -- Point spectrum approximation $\zeta_N/(N+1)$ for $N=100$ and $N=1000$ (left: linear scaling; middle: logarithmic scaling). Right: approximation to the distribution function for $N = 100$. An atom is likely to be located where $\zeta_N/(N+1)$ shows convergence to a positive value, i.e., where the blue and red curves are close to each other.}
\label{fig:cavity_pointspec}
\end{figure*}

\clearpage

\section{Conclusion}
This work presented a method for data-driven approximation of the spectrum of the Koopman operator with the main contribution being the separation of the atomic and continuous parts of the spectra and their approximation with rigorous convergence guarantees. The atomic and absolutely continuous parts of the spectrum are approximated using the Christoffel-Darboux kernel; for the singular continuous part we develop a method for its detection as well as two different methods to approximate the entire spectral measure in a weak sense even in the presence of singular continuous spectrum. In addition, we propose a method for approximation of the Koopman operator based on spectral projections with guaranteed convergence in the strong operator topology as well as almost everywhere convergence of the spectral projections. See Figure~\ref{fig:BigPic} for an overview of the paper.  The approach is  simple and readily applicable to large-scale systems. The only limitation of the approach is, we believe, the class of the systems addressed, i.e., measure-preserving ergodic systems (or measure-preserving systems for which the preserved measure is known). One direction of future research is a generalization beyond this class of systems, e.g., to dissipative or unstable systems.

\begin{figure}[t!]
\begin{picture}(300,140)
 \put(0,0){\centerline{\includegraphics[width=.9\textwidth]{./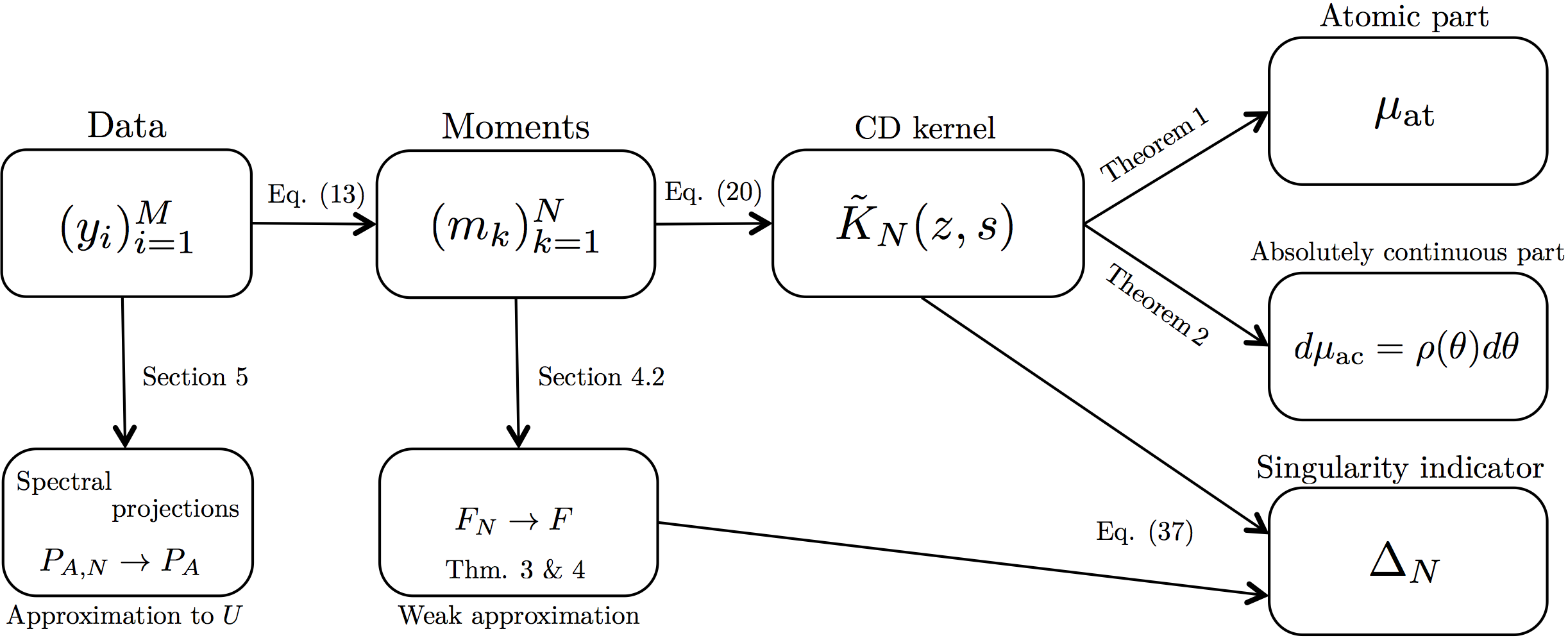}}}
\end{picture}
	\caption{\figCaptionSize{Overview of the paper.}}
	\label{fig:BigPic}
\end{figure}

\section{Acknowledgments}
The authors would like to thank the anonymous reviewers for their constructive comments that helped us improve the manuscript. The first author would also like to thank Hassan Arbabi for kindly providing the data for the cavity flow example, to Ryan Mohr and Marko Budi\v si\'c for sharing their initial thoughts on the topic and to Poorva Shukla for a careful reading of the manuscript and very helpful comments. The research was supported in part by the ARO-MURI grants W911NF-14-1-0359 and W911NF17-1-0306 as well as the DARPA grant HR0011-16-C-0116. The research of M. Korda was also supported by the Swiss National Science Foundation grant P2ELP2\_165166.

\bibliographystyle{abbrv}
\bibliography{./References}

\end{document}